\documentclass[12pt,a4paper, oneside]{amsart}

\usepackage{amsmath, nicefrac, amsthm, verbatim, amsfonts, mathtools, amssymb, upgreek, xcolor, bbm}
\usepackage{graphics, xspace, enumerate}
\usepackage{stix}
\usepackage{dsfont}
\usepackage[a4paper,margin=2.5cm]{geometry}
\usepackage[bb=boondox]{mathalfa}

\usepackage{graphicx}
\usepackage[colorlinks=true,citecolor=red,urlcolor=blue,linkcolor=red,bookmarksopen=true,unicode=true,pdffitwindow=true]{hyperref}
\usepackage[english]{babel}
\usepackage[languagenames,fixlanguage]{babelbib}
\hypersetup{pdfauthor={}}
\hypersetup{pdftitle={}}

\usepackage{seqsplit,cleveref}

\hyphenation{Austau-schdienst}

\theoremstyle{plain}
\newtheorem{theorem}{Theorem}[section]

\newtheorem{lemma}[theorem]{Lemma}
\newtheorem{proposition}[theorem]{Proposition}
\theoremstyle{definition}
\newtheorem{remark}[theorem]{Remark}

\newtheorem{definition}[theorem]{Definition}

\newcommand {\Prob} {\ensuremath{\mathbb{P}}}
\newcommand {\R} {\ensuremath{\mathbb{R}}}
\newcommand {\ZZ} {\ensuremath{\mathbb{Z}}}
\newcommand {\N} {\ensuremath{\mathbb{N}}}

\newcommand{\process}[1]{\{#1(x,t)\}_{t\geq0}}

\newcommand{\ttup}[1]{\textup{(}#1\textup{)}}
\newcommand{\df}{\coloneqq}

\newcommand{\n}{\mathrm{n}}
\newcommand{\m}{\mathrm{m}}
\newcommand{\dd}{\mathrm{d}}
\newcommand{\E}{\mathrm{e}}
\newcommand{\Id}{\mathbb{I}}

\newcommand{\PP}{\mathcal{P}}

\newcommand{\X}{\mathrm{X}}
\newcommand{\A}{\mathcal{A}}

\newcommand{\D}{\mathrm{d}}
\newcommand{\DD}{\mathrm{D}}
\newcommand{\HS}[1]{\lVert #1 \rVert_{\mathrm{HS}}}

\numberwithin{equation}{section}


\title[Periodic homogenization  of linear degenerate PDE\MakeLowercase{s}]{A CLT for degenerate diffusions  with periodic coefficients, and application to  homogenization of  linear  PDE\MakeLowercase{s}}

\author[N.\ Sandri\'{c}]{Nikola Sandri\'{c}}
\address[Nikola\ Sandri\'{c}]{Department of Mathematics\\University of Zagreb\\ Zagreb\\Croatia}
\email{nsandric@math.hr}

\author[I.\ Valenti\'c]{Ivana\ Valenti\'c}
\address[Ivana\  Valenti\'c]{
	Department of Mathematics\\University of Zagreb\\ Zagreb\\Croatia}
\email{ivana.valentic@math.hr}

\subjclass[2010]{35B27,	35J70, 35K65, 60F17, 60J25}
\keywords{characteristics of semimartingale, degenerate diffusion process, Feynman-Kac formula, periodic homogenization}

\begin{document}
\allowdisplaybreaks[4]

\begin{abstract}
	In this article, we obtain a functional CLT  for a class of degenerate diffusion processes with periodic coefficients, thus generalizing the already classical  results in the context of uniformly elliptic diffusions. As an application, we also  discuss periodic homogenization of a class of linear degenerate elliptic and parabolic PDEs.  
\end{abstract}

\maketitle

\section{Introduction}\label{S1} 

	Let $\mathcal{L}^{\varepsilon}$, $\varepsilon>0$, be a second-order elliptic differential operator  of the form
\begin{equation}\label{ES1.1}
\mathcal{L}^{\varepsilon}\,=\,
2^{-1}\mathrm{Tr}\bigl(a(\cdot/\varepsilon)\,\nabla\nabla^\mathrm{T}\bigr)+
\bigl(\varepsilon^{-1}b(\cdot/\varepsilon)+c(\cdot/\varepsilon)\bigr)^{\mathrm{T}}\nabla\,.
\end{equation} 
The main goal of this article is to discuss periodic homogenization (that is, asymptotic behavior of the solution as $\varepsilon\to0$) of the associated elliptic boundary-value problem \begin{equation}\label{ES1.2}
\begin{aligned}
\mathcal{L}^{\varepsilon} u^\varepsilon(x)+
e(x/\varepsilon)\,u^{\varepsilon}(x)+f(x)&\,=\,0\,,\qquad x\in\mathscr{D}\,,\\
u^\varepsilon(x)&\,=\, g(x)\,,\qquad x\in\partial\mathscr{D}\,,
\end{aligned}
\end{equation}
as well as the  parabolic initial-value problem
\begin{equation}\label{ES1.3} 
\begin{aligned}
\partial_tu^\varepsilon(x,t)&\,=\,\mathcal{L}^{\varepsilon} u^\varepsilon(x,t)+\bigl(\varepsilon^{-1}d(x/\varepsilon)+e(x/\varepsilon)\bigr)u^{\varepsilon}(x,t)+f(x)\\
u^\varepsilon(x,0)&\,=\, g(x)\,,\qquad x\in\R^\n\,,
\end{aligned}
\end{equation}
in the case of degenerate (possibly vanishing on a set of positive Lebesgue measure) diffusion coefficient $a(x)$. 
Our approach to this problem relies on probabilistic techniques: we first show that the (appropriately centered) diffusion process associated to  $\mathcal{L}^{\varepsilon}$ satisfies a functional CLT with Brownian limit as $\varepsilon\to0$ (see \Cref{T3.1,T3.5}), and then by employing probabilistic representation (the Feynman-Kac formula) of the (viscosity) solutions to the problems in \cref{ES1.2,ES1.3} obtained in  \cite[Chapter 3]{Pardoux-Rascanu-Book-2014}  we conclude the homogenization result (see \Cref{T4.4,T4.5}). This  idea goes back to M. I. Fre\u{\i}dlin  \cite{Freidlin-1964} (see also \cite[Chapter 3]{Bensoussan-Lions-Papanicolaou-Book-1978}). In the non-degenerate (uniformly elliptic) case these steps can be carried out by combining classical PDE results (existence of a smooth solution to the corresponding Poisson equation) and the fact that the underlying diffusion process  does not show a singular behavior in its motion, that is, it is irreducible (see \cite[Chapter 3]{Bensoussan-Lions-Papanicolaou-Book-1978} and \cite{Bhattacharya-1985} for a detailed exposition). In the case of a degenerate diffusion part, this deficiency is compensated by the assumption   
 that the underlying diffusion process  with positive probability reaches the   part of the state space  where the diffusion term is non-degenerate   (see assumption \ttup{A3}). In oder words, this condition ensures irreducibility of  the process (see \Cref{S2} for details). Also, in this case it is not clear that we can rely on PDE techniques therefore the analysis of  a  solution to the corresponding  Poisson equation is completely based on stochastic analysis tools, in particular Motoo's theorem \cite[Proposition 3.56]{Cinlar-Jacod-Protter-Sharpe-1980} (see \Cref{S3}).


\subsection{Literature review}  
Our work contributes to the classical theory of periodic homogenization. Most of the existing literature on this subject focuses on the problem of homogenization of   non-degenerate PDEs; for instance, see the classical monographs \cite{Allaire-2002-Book}, \cite{Bensoussan-Lions-Papanicolaou-Book-1978},   \cite{Jikov-Kozlov-Oleinik-1994-Book} and  \cite{Tartar-2009-Book}. However, in the  recent years  there have been  developments in understanding the
homogenization of degenerate PDEs.
We refer the readers to \cite{DeArcangelis-SerraCassano-1992}, \cite{Paronetto-1999}, \cite{Paronetto-2004} and \cite{Paronetto-SerraCassano-1998}  for a PDE approach to this problem, and 
 \cite{Chiarini-Deuschel-2016}, \cite{Delarue-Rhodes-2009}, \cite{Rhodes-2007} and \cite{Rhodes-2009} for a probabilistic approach.
 However, in all these works the major limitation is that  the diffusion term can fully degenerate (vanish) on a ``small'' part of the domain only.
 In the first five references it is allowed that it vanishes on a set of Lebesgue measure zero only and in the rest of the domain it must have a full rank. While  in \cite{Delarue-Rhodes-2009}, \cite{Rhodes-2007} and \cite{Rhodes-2009} it is allowed that it  degenerates  everywhere, but its rank must be greater than or equal to one except maybe on a set of Lebesgue measure zero. 
In the present work we partly fill this gap and focus on the case when the diffusion part  vanishes on a set of positive Lebesgue measure.
 In the closely related article \cite{Hairer-Pardoux-2008} (see also  \cite{Pardoux-Rhodes-Sow-2009} and \cite{Pardoux-Sow-2011}  in the context of semilinear elliptic and parabolic PDEs), by also employing probabilistic methods, the authors are concerned with  the same questions we discuss in this article. However, unfortunately, there seems to be a doubt about their proof of the functional CLT in \cite[Theorem 3.1]{Hairer-Pardoux-2008} (see \Cref{S3.2} for details).
In this article, under slightly weaker assumptions (and by employing different techniques)  we  resolve this issue, or at least suggest an alternative approach to the problem. We distinguish two cases: (i) $c(x)\equiv0$, and (ii) $c(x)\nequiv0$. In the first case, in \Cref{T3.1} we obtain the functional CLT under the assumptions in \ttup{A1}-\ttup{A3}. Here, \ttup{A1} should be compared to  \ttup{H.1} from  \cite{Hairer-Pardoux-2008}, and \ttup{A2}-\ttup{A3} to \ttup{H.2}. Note that the condition in \ttup{H.2} assumes existence of a fixed time $t_0>0$ such that for every $\varepsilon\in[0,\varepsilon_0]$, for some $\varepsilon_0>0$, the corresponding diffusion process  observed at $t_0$ is with  positive probability in a part of the state space where the diffusion term is non-degenerate, which is a slightly stronger assumption than \ttup{A3}. 
Observe also  that in addition to \ttup{H.1}-\ttup{H.2} the authors in \cite{Hairer-Pardoux-2008} assume  a regularizing  condition \ttup{H.3}, which we do not require in this case.
On the other hand, the case
$c(x)\nequiv0$   is technically more delicate. Namely, an analogous analysis as in \Cref{T3.1}  cannot  be performed in this situation.  To overcome this difficulty we adopt assumption   \ttup{H.3} 
from \cite{Hairer-Pardoux-2008} (see \ttup{A4}), which allows us to conclude an It\^{o}-type formula for the diffusion process associated to the operator $2^{-1}\mathrm{Tr}(a(\cdot)\,\nabla\nabla^\mathrm{T})+
(b(\cdot)+\varepsilon c(\cdot))^{\mathrm{T}}\nabla$ (see \Cref{L3.4}). 
With this in hand, and basing on the ideas from \cite{Freidlin-1964}, we are then able to obtain the required functional CLT (see \Cref{T3.5}) and  conclude the homogenization results in \Cref{T4.4,T4.5}.

The results of this article can also be found 
as a part of the second-named author's doctoral dissertation \cite{Valentic-PhD-2020}, where the problem of periodic homogenization of a class of L\'evy-type operators has been discussed.


\subsection{Notation} 
We summarize some  notation used throughout the article. 
We use $\R^\n$, $\n\in\N$, to denote real-valued
$n$-dimensional  vectors, and write $\R$ for $\n=1$. All  vectors will be column vectors. cis denoted by $\lvert\cdot\rvert$.
By $M^{\mathrm{T}}$ and $\HS{M}\df (\mathrm{Tr}\,MM^{\mathrm{T}})^{1/2}$ we denote the transpose and the Hilbert-Schmidt norm of  a $n\times m$-matrix $M$, respectively. For a 
square matrix $M$,
 $\mathrm{Tr}\, M$ stands for its trace. For a set $A\subseteq\R^\n$,  the symbols
 $A^{c}$, $\mathbb{1}_{A}$, $\overline{A}$ and $\partial A$  stand for
 the complement, indicator function,  (topological) closure and (topological) boundary of $A$, respectively.
We let  $\mathfrak{B}(\R^\n)$ and $\mathcal{B}(\R^\n,\R^\m)$ denote the Borel $\sigma$-algebra on $\R^\n$ and the space of $\mathfrak{B}(\R^\n)/\mathfrak{B}(\R^\m)$-measurable functions, respectively. Also, for $A\subseteq\R^\n$, $\mathfrak{B}(A)$ stands for $\{A\cap B\colon B\in\mathfrak{B}(\R^n)\}$.
For a Borel  measure $\upmu(\D x)$
on $\mathfrak{B}(\R^\n)$ and $f=(f_1,\dots,f_\m)^{\mathrm{T}}\in\mathcal{B}(\R^\n,\R^\m)$, we often use the convenient notation
$\upmu(f)=\int_{\R^\n} f(x)\,\upmu(\D{x})\df (\int_{\R^\n} f_1(x)\,\upmu(\D{x}),\dots,\int_{\R^\n} f_\m(x)\,\upmu(\D{x}))^\mathrm{T}$.
For $f\in\mathcal{B}(\R^\n,\R^\m)$ we let $\|f\|_\infty\df\sup_{x\in\R^\n}|f(x)|$ denote its supremum norm, and 
$\mathcal{B}_b(\R^\n,\R^\m)$ stands for $\{f\in\mathcal{B}(\R^\n,\R^\m)\colon \|f\|_\infty<\infty\}$. We use  $\mathcal{C}_b^k(\R^\n,\R^\m)$, $\mathcal{C}_{u,b}^k(\R^\n,\R^\m)$, $\mathcal{C}_{\infty}^k(\R^\n,\R^\m)$ and   $\mathcal{C}_{c}^k(\R^\n,\R^\m)$, $k\in\N_0\cup\{\infty\}$, to denote the subspaces of $\mathcal{B}_b(\R^\n,\R^\m)\cap \mathcal{C}^k(\R^\n,\R^\m)$ of all  $k$ times differentiable functions such that all derivatives up to order $k$ are bounded, uniformly continuous and bounded, vanish at infinity, and have compact support, respectively.  Gradient of $f\in\mathcal{C}^1(\R^\n,\R)$ is denoted by $\nabla f(x)=(\partial_1 f(x),\dots,\partial_\n f(x))^\mathrm{T}$, and for $f=(f_1,\dots,f_\m)^\mathrm{T}\in\mathcal{C}^1(\R^\n,\R^\m)$ we write $\mathrm{D} f(x)=(\nabla f_1(x),\dots,\nabla f_\m(x))^\mathrm{T}$  for the corresponding Jacobian. For $\tau=(\tau_1,\dots, \tau_{\n})^{\mathrm{T}}\in (0,\infty)^{\n}$, we let $\ZZ^{\n}_\tau\df\{(\tau_1 k_1,\dotsc,\tau_{\n} k_\n)^\mathrm{T}\colon$\linebreak $ (k_1,\dotsc,k_\n)^\mathrm{T}\in\ZZ^\n\},$
and, for  $x\in\R^{\n}$, 
$$[x]_\tau\,\df\,\bigl\{y\in\R^{\n}\colon x-y\in\ZZ_\tau^{\n}\bigr\}\,,\qquad\textrm{and}\qquad
\mathbb{T}^{\n}_\tau\,\df\,\bigl\{[x]_\tau\colon x\in\R^{\n}\bigr\}\,.$$
Clearly,
$\mathbb{T}^{\n}_\tau$ is obtained
by identifying the opposite
faces of $[0,\tau]\df[0,\tau_1]\times\cdots\times[0,\tau_{\n}]$. 
The corresponding Borel $\sigma$-algebra is denoted by $\mathfrak{B}(\mathbb{T}_\tau^\n)$, which can be identified with the sub-$\sigma$-algebra of $\mathfrak{B}(\R^\n)$  of sets of the form $\bigcup_{k_\tau\in\ZZ_\tau^\n}\{x+k_\tau\colon x\in B_\tau\}$, $B_\tau\in\mathfrak{B}([0,\tau])$.
The covering map $\R^\n\ni x\mapsto [x]_\tau\in\mathbb{T}_\tau^\n$ is denoted by 
$\Pi_{\tau}(x)$.
A
function $f:\R^{\n}\to\R^{\m}$ is called $\tau$-periodic if
$$f(x+k_\tau )\,=\,f(x)\qquad \forall\, (x,k_\tau)\in\R^{\n}\times\ZZ^{\n}_\tau\,.$$
Clearly, every $\tau$-periodic function $f(x)$ is completely and uniquely determined by its restriction $f|_{[0,\tau]}(x)$ to $[0,\tau]$, and since  $f|_{[0,\tau]}(x)$ assumes the same value on opposite faces of $[0,\tau]$ it can be identified  by a function $f_\tau:\mathbb{T}^{\dd}_\tau\to\R^{\m}$ given with $f_\tau ([x]_\tau)\df f(x).$  Using this identification, in an analogous way as above we define  $\mathcal{B}(\mathbb{T}_\tau^\n,\R^\m)$, $\mathcal{B}_b(\mathbb{T}_\tau^\n,\R^\m)$  and $\mathcal{C}_b^k(\mathbb{T}_\tau^\n,\R^\m)=\mathcal{C}^k(\mathbb{T}_\tau^\n,\R^\m)$, $k\in\N_0\cup\{\infty\}$.
For notational convenience, we   write $x$ instead of $[x]_\tau$, and $f(x)$ instead of $f_\tau(x)$.


\subsection{Organization of the article}  In the next section, we first discuss certain structural and ergodic properties of a diffusion process associated to the operator $\mathcal{L}^\varepsilon$. 
Then, in \Cref{S3}, we prove that under an appropriate centering this  process satisfies a functional CLT, as $\varepsilon\to0$,  with a Browninan  limit, which is the key probabilistic argument in discussing the homogenization of the problems in \cref{ES1.1,ES1.2}.
Finally, in \Cref{S4},  we prove the homogenization results.

 
\section{Structural properties of the associated diffusion process}\label{S2}

Throughout the article we impose the following assumptions on the  coefficients $a(x)$, $b(x)$ and $c(x)$:
 
 \medskip
 
\begin{description}
	\item[\ttup{A1}] \begin{enumerate}
		 	\item[\ttup{i}] there is  $\upsigma\in\mathcal{B}(\R^\n,\R^{\n\times\m})$ such that $a(x)=\upsigma(x)\upsigma(x)^{\rm T}$ for all $x\in\R^{\n}$; 
		
		\medskip
		
		\item[\ttup{ii}]
			$\upsigma(x)$, $b(x)$ and $c(x)$ are continuous and $\tau$-periodic; 
		
		\medskip
		
		\item[\ttup{iii}] there is $\Theta>0$ and a non-decreasing concave function $\uptheta:(0,\infty)\to(0,\infty)$ satisfying $$\int_{0+}\frac{\D v}{\uptheta(v)}\,=\,\infty\,,$$ such that for all $x,y\in[0,\tau]$,
		\begin{equation}\label{ES2.1} \max\bigl\{\HS{\upsigma(x)-\upsigma(y)}^2,(b(x)-b(y))(x-y)^\mathrm{T},(c(x)-c(y))(x-y)^\mathrm{T}\bigr\}\,\le\,\Theta\,|x-y|\,\uptheta(|x-y|)\,.
		\end{equation}
	\end{enumerate}
\end{description}
 
 \medskip
 
\noindent According to \cite[Theorems 2.2 and 2.4]{Xi-Zhu-2019}, \ttup{A1}  implies that for any $\varepsilon>0$, $x\in\R^\n$  and a  given standard $m$-dimensional Brownian motion $\{B(t)\}_{t\ge0}$ (defined on a stochastic basis $(\Omega,\mathcal{F},\{\mathcal{F}_t\}_{t\ge0},$\linebreak$\Prob)$ satisfying the usual conditions), the following stochastic differential equation (SDE):
\begin{align*}\D X^\varepsilon(x,t)&\,=\,\bigl(\varepsilon^{-1} b\bigl(X^\varepsilon(x,t)/\varepsilon\bigr)+ c\bigl(X^\varepsilon(x,t)/\varepsilon\bigr)\bigr)\, \D t +\upsigma\bigl(X^\varepsilon(x,t)/\varepsilon\bigr)\,\D B(t)\\X^\varepsilon(x,0)&\,=\,x\in\R^{\n}\,,\end{align*}
admits a unique strong solution $\process{X^\varepsilon}$ which is a conservative (non-explosive) strong Markov process with continuous sample paths, and transition kernel $p^\varepsilon(t,x,\D y)=\Prob\bigl(X^\varepsilon(x,t)\in\D y\bigr)$, $t\ge0$, $x\in\R^{\n}$. Furthermore,  due to \cite[Proposition 4.2]{Xi-Zhu-2019} the process $\process{X^\varepsilon}$ possesses the $\mathcal{C}_b$-Feller  property, that is, $\PP^\varepsilon_tf\in \mathcal{C}_b(\R^\n,\R)$ for any $t\ge0$ and $f\in \mathcal{C}_b(\R^\n,\R)$, where $$\PP^\varepsilon_tf(\cdot)\,\df\,\int_{\R^{\dd}}f(y)\,p^{\varepsilon}(t,\cdot,\D y)\,,\qquad t\ge0\,,\quad f\in\mathcal{B}_b(\R^{\n},\R)\,,$$ stands for the corresponding operator semigroup defined on the Banach space \linebreak $(\mathcal{B}_b(\R^\n,\R), \lVert\cdot\rVert_{\infty})$.
The $\mathcal{B}_b$-infinitesimal generator $(\mathcal{A}^\varepsilon, \mathcal{D}_{\mathcal{A}^\varepsilon})$ of $\{\PP^\varepsilon_t\}_{t\ge0}$ (or of  $\process{X^\varepsilon}$) is a linear operator $\mathcal{A}^\varepsilon:\mathcal{D}_{\mathcal{A}^\varepsilon} \to \mathcal{B}_b(\R^\n,\R)$ defined by
\begin{equation*}
\mathcal{A}^\varepsilon f\,\df\,\lim_{t \to 0} \frac{\PP^\varepsilon_t f-f}{t}\,,\qquad f \in \mathcal{D}_{\mathcal{A}^\varepsilon}\,\df\,\left\{f \in \mathcal{B}_b(\R^\n,\R)\colon \lim_{t \to 0} \frac{\PP^\varepsilon_t f-f}{t}\ \text{ exists in}\ \lVert\cdot\rVert_{\infty} \right\}\,.
\end{equation*}
 By employing It\^{o}'s formula we easily see that $$\lim_{t\to0}\left\lVert\bigl(\PP^\varepsilon_tf-f\bigr)/t-\mathcal{L}^\varepsilon f\right\rVert_\infty\,=\,0\qquad \forall\,f\in\mathcal{C}_{u,b}^2(\R^\n,\R)\,,$$
that is, $\mathcal{C}_{u,b}^2(\R^\n,\R)\subseteq \mathcal{D}_{\mathcal{A}^\varepsilon}$ and $\A^\varepsilon|_{\mathcal{C}_{u,b}^2(\R^\n,\R)}=\mathcal{L}^\varepsilon.$

Following \cite{Freidlin-1964} (see also \cite[Lemma 3.4.1]{Bensoussan-Lions-Papanicolaou-Book-1978}), for $\varepsilon>0$  let $\bar X^\varepsilon(x,t)\df \varepsilon^{-1}X^\varepsilon(\varepsilon x,\varepsilon^2 t)$, $t\ge0$. Clearly, $\process{\bar X^\varepsilon}$ satisfies \begin{equation}\label{ES2.2}\begin{aligned}\D \bar X^\varepsilon(x,t)&\,=\, \bigl(b\bigl(\bar X^\varepsilon(x,t)\bigr)+\varepsilon c\bigl(\bar X^\varepsilon(x,t)\bigr)\bigr)\,\D t+\upsigma\bigl(\bar X^\varepsilon(x,t)\bigr)\,\D B^\varepsilon(t)\\
\bar X^\varepsilon(x,0)&\,=\,x\in\R^\n\,,\end{aligned}\end{equation} where $B^\varepsilon(t)\df\varepsilon^{-1}B(\varepsilon^2t)$, $t\ge0$. Observe that $\{B^\varepsilon(t)\}_{t\ge0}\stackrel{(\rm{d})}{=}\{B(t)\}_{t\ge0}$, although it is not a martingale with respect to $\{\mathcal{F}_t\}_{t\ge0}$.  Here, $\stackrel{(\rm{d})}{=}$ denotes the equality in distribution. Let also $\process{\bar X^0}$ be a solution to \begin{equation}\begin{aligned}\label{ES2.3}\D \bar X^0(x,t)&\,=\, b\bigl(\bar X^0(x,t)\bigr)\,\D t+\upsigma\bigl(\bar X^0(x,t)\bigr)\,\D B(t)\\
 \bar X^0(x,0)&\,=\,x\in\R^\n\,.\end{aligned}\end{equation} Clearly, the processes $\process{\bar X^\varepsilon}$, $\varepsilon\ge0$, share the same structural properties as $\process{ X^\varepsilon}$, $\varepsilon>0$, mentioned above.
Denote by $\bar p^\varepsilon(t,x,\D y)=\Prob\bigl(\bar X^\varepsilon(x,t)\in\D y\bigr)$, $t\ge0$, $x\in\R^\n$,  $\{\mathcal{\bar P}^\varepsilon_t\}_{t\ge0}$ and  $(\bar{\mathcal{A}}^\varepsilon,\mathcal{D}_{\bar{\mathcal{A}}^\varepsilon})$ the corresponding transition kernel, operator semigroup and  $\mathcal{B}_b$-infinitesimal generator,   respectively. From \cite[Theorem IX.4.8]{Jacod-Shiryaev-2003} it follows that \begin{equation}\label{ES2.4}\process{\bar X^\varepsilon}\,\xRightarrow[\varepsilon\to0]{({\rm d})}\,\process{\bar X^0}\,.\end{equation} In particular, for any $t\ge0$ and $x\in\R^\n$, $\bar p^\varepsilon(t,x,\D y)\xRightarrow[\varepsilon\to0]{({\rm w})}\bar p^0(t,x,\D y)$, that is, $$\lim_{\varepsilon \to 0}\bar\PP_t^\varepsilon f(x)\,=\, \bar\PP_t^0 f(x)$$ for any $t\ge0$, $x\in\R^\n$ and $f\in\mathcal{C}_b(\R^\n,\R)$. Here,      $\ \xRightarrow[]{({\rm d})}$ denotes the convergence in the space of continuous functions endowed with the locally uniform topology (see \cite[Chapter VI]{Jacod-Shiryaev-2003} for details), and   $\ \xRightarrow[]{({\rm w})}$
  stands for the weak convergence of probability measures.

Next, observe that due to $\tau$-periodicity of the coefficients $\{\bar X^\varepsilon(x+k_{\tau},t)\}_{t\ge0}$ and $\{\bar X^\varepsilon(x,t)+k_{\tau}\}_{t\ge0}$, $\varepsilon\ge0$, $x\in\R^\n$ , $k_{\tau} \in\ZZ^\n_{\tau}$,  are indistinguishable. In particular, $$\bar p^\varepsilon(t,x+k_{\tau},B)\,=\,\bar p^\varepsilon(t,x,B-k_{\tau})$$ for all $\varepsilon\ge0$, $t\ge0$, $x\in\R^\n$, $k_{\tau}\in\ZZ^\n_{\tau}$ and $B\in\mathfrak{B}(\R^\n),$
which implies that $\{\bar\PP^\varepsilon_t\}_{t\ge0}$ preserves the class of $\tau$-periodic functions in $\mathcal{B}_b(\R^\n,\R)$. Thus,  according to \cite[Proposition 3.8.3]{Kolokoltsov-Book-2011}   the projection of  $\process{\bar X^{\varepsilon}}$ with respect to $\Pi_{\tau}(x)$ on the torus $\mathbb{T}^\n_{\tau}$, denoted by   $\process{\bar X^{\varepsilon,\tau}}$, is a Markov process on $(\mathbb{T}^\n_{\tau},\mathfrak{B}(\mathbb{T}^\n_{\tau}))$ with transition kernel  given by
\begin{equation}\label{ES2.5}
\bar p^{\varepsilon,\tau}(t,x,B)\,=\,\bar p^\varepsilon\bigl(t,z_x,\Pi_{\tau}^{-1}(B)\bigr)\end{equation} for $\varepsilon\ge0$, $ t\ge 0$, $x\in \mathbb{T}^\n_{\tau}$, $B \in \mathfrak{B}(\mathbb{T}^\n_{\tau})$ and $z_x \in \Pi_{\tau}^{-1}(\{x\})$.
In particular, 
$\process{\bar X^{\varepsilon,\tau}}$ is  a $\mathcal{C}_b$-Feller  process.

\medskip

\begin{proposition}\label{P2.1} Under \ttup{A1}, for any $t\ge0$ and $\tau$-periodic $f\in\mathcal{C}_b(\R^\n,\R)$ it holds that $$\lim_{\varepsilon \to 0}\|\bar{\mathcal{P}}_t^\varepsilon f-\bar{\mathcal{P}}_t^0 f\|_\infty\,=\,0\,.$$
\end{proposition}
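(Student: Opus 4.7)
The starting point is that both $\bar{\mathcal{P}}_t^{\varepsilon} f$ and $\bar{\mathcal{P}}_t^0 f$ are $\tau$-periodic (because $\{\bar{\mathcal{P}}^{\varepsilon}_t\}$ preserves $\tau$-periodic bounded Borel functions, as noted just before the proposition), so
$$\|\bar{\mathcal{P}}_t^{\varepsilon} f - \bar{\mathcal{P}}_t^0 f\|_\infty \,=\, \sup_{x\in[0,\tau]} |\bar{\mathcal{P}}_t^{\varepsilon} f(x) - \bar{\mathcal{P}}_t^0 f(x)|\,,$$
and the supremum is over a compact set. Moreover, pointwise convergence $\bar{\mathcal{P}}_t^{\varepsilon} f(x) \to \bar{\mathcal{P}}_t^0 f(x)$ for every $x\in\R^{\n}$ is already available from \eqref{ES2.4}. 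Thus the task is to upgrade pointwise to uniform convergence on the compact set $[0,\tau]$.

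I would proceed by contradiction. Assume there exist $\delta>0$, $\varepsilon_n\downto 0$ and $x_n\in[0,\tau]$ with $|\bar{\mathcal{P}}_t^{\varepsilon_n}f(x_n)-\bar{\mathcal{P}}_t^0f(x_n)|\ge\delta$. Compactness of $[0,\tau]$ yields a subsequence (still denoted $x_n$) with $x_n\to x^*\in[0,\tau]$. The $\mathcal{C}_b$-Feller property of $\{\bar{\mathcal{P}}_t^0\}$ established above gives $\bar{\mathcal{P}}_t^0 f(x_n)\to \bar{\mathcal{P}}_t^0 f(x^*)$, so it remains to prove the joint limit
$$\bar{\mathcal{P}}_t^{\varepsilon_n} f(x_n) \,\longrightarrow\, \bar{\mathcal{P}}_t^0 f(x^*)\,,$$
which would produce the contradiction. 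For this I would establish the joint weak convergence $\{\bar X^{\varepsilon_n}(x_n,s)\}_{s\in[0,T]}\xRightarrow[]{({\rm d})}\{\bar X^0(x^*,s)\}_{s\in[0,T]}$ in $\mathcal{C}([0,T],\R^{\n})$ for each $T>0$; since $f\in\mathcal{C}_b(\R^{\n},\R)$ and evaluation at the fixed time $t$ is continuous, this passes to expectations and closes the argument.

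Joint weak convergence can be obtained in the usual two steps. Tightness of the laws $\{\bar X^{\varepsilon_n}(x_n,\cdot)\}_n$ in $\mathcal{C}([0,T],\R^{\n})$ follows from standard fourth-moment estimates on SDE increments, using that $b$, $c$ and $\upsigma$ are bounded (as continuous $\tau$-periodic functions), together with the boundedness of the initial points $x_n$. Identification of the limit goes via the martingale problem: for any $\phi\in\mathcal{C}_c^2(\R^{\n},\R)$ the process
$$\phi(\bar X^{\varepsilon_n}(x_n,s))-\phi(x_n)-\int_0^s \bar{\mathcal{A}}^{\varepsilon_n}\phi(\bar X^{\varepsilon_n}(x_n,u))\,\D u$$
is a martingale. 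Since $\bar{\mathcal{A}}^{\varepsilon_n}\phi\to \bar{\mathcal{A}}^0\phi$ uniformly on $\R^{\n}$ (the two generators differ only by the bounded drift contribution $\varepsilon_n\, c(\cdot)^{\mathrm{T}}\nabla\phi$) and $x_n\to x^*$, any weak subsequential limit $Y$ solves the martingale problem associated to $\bar{\mathcal{A}}^0$ with deterministic start $x^*$. Assumption \ttup{A1} guarantees strong, and hence weak, uniqueness for \eqref{ES2.3} by the Xi--Zhu results cited after \ttup{A1}, so $Y\stackrel{({\rm d})}{=}\bar X^0(x^*,\cdot)$, which identifies the limit uniquely. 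Equivalently, one can invoke \cite[Theorem IX.4.8]{Jacod-Shiryaev-2003}, already used to obtain \eqref{ES2.4}, in the form that permits weakly convergent initial distributions $\delta_{x_n}\Rightarrow\delta_{x^*}$.

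The main delicate point is the limit identification when the initial condition is also varying: one has to be sure weak uniqueness for the limit SDE \eqref{ES2.3} is available under the mild regularity \ttup{A1}\ttup{iii}. Since \ttup{A1} is designed precisely to yield strong existence and uniqueness (via the cited Xi--Zhu theorems), this step is under control and the contradiction argument closes. Passing to $\sup_{x\in[0,\tau]}$ and recalling the periodicity reduction at the top of the argument yields the desired uniform convergence.
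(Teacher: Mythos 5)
Your proof is correct but travels a genuinely different road than the paper's. The paper's argument is essentially two lines: it combines \cref{ES2.4,ES2.5} to obtain weak convergence of the projected processes $\process{\bar X^{\varepsilon,\tau}}\xRightarrow[\varepsilon\to0]{({\rm d})}\process{\bar X^{0,\tau}}$ on the compact torus $\mathbb{T}_\tau^\n$, and then invokes \cite[Theorem 17.25]{Kallenberg-Book-1997}, which for Feller processes on a compact space directly upgrades weak convergence of the processes to uniform convergence of the semigroups. You instead stay on $\R^\n$, use $\tau$-periodicity of $\bar{\mathcal{P}}_t^\varepsilon f$ to reduce the sup norm to a sup over the compact set $[0,\tau]$, and then run a compactness-contradiction argument: extract $x_n\to x^*$ and prove the joint limit $\bar X^{\varepsilon_n}(x_n,\cdot)\xRightarrow[]{({\rm d})}\bar X^0(x^*,\cdot)$ by tightness (bounded coefficients) plus martingale-problem identification, closed by weak uniqueness of \eqref{ES2.3} via Yamada--Watanabe from the Xi--Zhu strong well-posedness already cited under \ttup{A1}. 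This is the content that Kallenberg's theorem packages abstractly, so your route is more elementary and self-contained (it does not rely on the semigroup-convergence machinery) at the cost of being substantially longer; the paper's route is shorter but opaque unless the reader unpacks what Theorem 17.25 says. Both are sound; your appeal to \cite[Theorem IX.4.8]{Jacod-Shiryaev-2003} in the version allowing convergent initial laws is a legitimate shortcut that collapses the tightness/identification steps into a single citation, much as the paper collapses the entire proposition into one citation.
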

\begin{proof} From \cref{ES2.4,ES2.5} we see that
	 $$\process{\bar X^{\varepsilon,\tau}}\,\xRightarrow[\varepsilon\to0]{({\rm d})}\,\process{\bar X^{0,\tau}}\,.$$
	Now, 
	since $\mathbb{T}^\dd_{\tau}$  is compact,   the assertion  follows from \cite[Theorem 17.25]{Kallenberg-Book-1997}.
\end{proof}

\medskip

Clearly, for any $x\in\R^\n$ the matrix $a(x)$ is symmetric and non-negative definite. 
We further assume     

\medskip

\begin{description}
	\item[\ttup{A2}] 
	\begin{itemize}
		\item [\ttup{i}] there is an open connected set $\mathscr{O}\subset \left[0,\tau\right]$ such that  the matrix $a(x)$ is positive definite on $\overline{\mathscr{O}}$, that is,  $$ \xi^{\mathrm{T}}a(x)\,\xi \,>\,0\qquad\forall\, (x,\xi)\in \overline{\mathscr{O}}\times\R^{\n}\setminus \{0\}\,;$$
		
		\medskip

		\item[\ttup{ii}] $a(x)$, $b(x)$ and $c(x)$  are $\gamma$-H\"{o}lder continuous for some $0<\gamma\le1$, that is, there is $\Gamma>0$ such that \begin{equation}\label{ES2.6}\HS{a(x)-a(y)}+|b(x)-b(y)|+|c(x)-c(y)|\,\le\,\Gamma\,|x-y|^\gamma\qquad \forall\,x,y\in\R^\n\,.\end{equation}
		
	\end{itemize}
	
\end{description}

\medskip

	\begin{remark} \label{R2.2}
		\begin{itemize}
			\item [\ttup{i}] For a given symmetric, non-negative definite and Borel measurable $n\times n$-matrix-valued function $a(x)$ there is a unique non-negative definite and Borel measurable $n\times n$-matrix-valued function $\bar{\upsigma}(x)$ such that $a(x)=\bar{\upsigma}(x)\bar{\upsigma}(x)^{\rm T}$ for all $x\in\R^\n$. In general, it is not clear that smoothness (H\"{o}lder continuity or differentiability) of $a(x)$  implies smoothness of $\bar{\upsigma}(x)$. However, if $a(x)$ is additionally positive definite or twice continuously differentiable this will be the case (see \cite[Lemma 6.1.1 and Theorem 6.1.2]{Friedman-Book-1975}). In particular, under \ttup{A2},  $\bar{\upsigma}(x)$ will be $\gamma$-H\"{o}lder continuous on  $\mathscr{O}$.
			
			\medskip
			
			\item[\ttup{ii}] \Cref{ES2.1} holds true if for all $x,y\in\R^\n$,	\begin{equation}\label{ER2.2} \HS{\upsigma(x)-\upsigma(y)}^2+\, |x-y|\bigl(|b(x)-b(y)|+|c(x)-c(y)|\bigr)\,\le\,\Theta\,|x-y|\,\theta(|x-y|)\,.
			\end{equation} 	Clearly, \cref{ER2.2}, together with periodicity of $\upsigma(x)$, automatically implies  $1/2$-H\"{o}lder continuity of $\upsigma(x)$. Moreover, since \begin{align*}\HS{a(x)-a(y)}&\,=\,\HS{\upsigma(x)\upsigma(x)^{\rm T}-\upsigma(y)\upsigma(y)^{\rm T}}\\&\,\le\,\HS{(\upsigma(x)-\upsigma(y))\,\upsigma(x)^{\rm T}}+\HS{\upsigma(y)\,(\upsigma(x)^{\rm T}-\upsigma(y)^{\rm T})}\\
			&\,\le\,2\ \lVert\upsigma\rVert_\infty\,\HS{\upsigma(x)-\upsigma(y)}\end{align*}
			it also implies $1/2$-H\"{o}lder continuity of $a(x)$.
			In addition, if $\limsup_{v\to0}\uptheta(v)/v^{\gamma}<\infty$ for some $\gamma\in(0,1]$, it is easy to see that \cref{ER2.2} implies $\gamma$-H\"{o}lder continuity of $b(x)$ and $c(x)$, and  $(1+\gamma)/2$-H\"{o}lder continuity of $\upsigma(x)$ and $a(x)$.
			
			\medskip
			
			\item[\ttup{iii}] Assumptions \ttup{A1}-\ttup{A2} imply  that  $a(x)$ is  uniformly elliptic on $\overline{\mathscr{O}}$, that is,  there is  $\alpha >0$ such that $$\xi^{\mathrm{T}}a(x)\,\xi\, \ge\, \alpha |\xi| ^2\qquad\forall\, (x,\xi)\in \overline{\mathscr{O}}\times\R^{\n}\,.$$ Indeed, since for every $x \in \overline{\mathscr{O}}$ the matrix $a(x)$ is symmetric and positive definite, the corresponding  eigenvalues $\uplambda_1(x),\dots, \uplambda_\n(x)$ are real and positive. Also, 
			since  $\uplambda_1(x),\dots, \uplambda_\n(x)$ are  roots of the polynomial $\lambda\mapsto\det(a(x)- \lambda\Id_\n)$ we see that each $\uplambda_i\colon \overline{\mathscr{O}}\to(0,\infty)$ is continuous. Here, $\Id_\n$ stands for the $n\times n$-identity matrix. Hence, due to compactness of $\overline{\mathscr{O}}$, we conclude that there is $\alpha>0$ such that $a(x)-\alpha \Id_\n$ is positive definite on $\overline{\mathscr{O}}$, which proves the assertion.
		\end{itemize}
	\end{remark}

\medskip

For $\varepsilon\ge0$, $x\in\R^\n$ and $B\in\mathfrak{B}(\R^\n)$, let $\bar\uptau^{\varepsilon,x}_B\df\inf\{t\ge0\colon \bar X^\varepsilon(x,t)\in B\}$ be the first entry time of  $B$ by $\process{\bar X^\varepsilon}$.
Assume 

\medskip

\begin{description}
	\item[\ttup{A3}] there is $\varepsilon_0>0$ such that  $$\Prob\bigl(\bar\uptau^{\varepsilon,x}_{\mathscr{O}+\tau}<\infty\bigr)\,>\,0\qquad\forall\, (\varepsilon,x)\in[0,\varepsilon_0]\times\R^\n\,,$$ where $\mathscr{O}+\tau\df\{x+k_\tau \colon  x\in \mathscr{O},\ k_\tau \in\ZZ^\n_\tau \}$.  
\end{description}

\medskip 

\noindent
Under \ttup{A1},    \cite[Theorem 3.1]{Meyn-Tweedie-AdvAP-II-1993} shows that $\process{\bar X^{\varepsilon,\tau}}$, $\varepsilon\ge0$, admits at least one invariant probability measure. Assuming additionally  \ttup{A2}-\ttup{A3}, in what follows we show that \linebreak $\process{\bar X^{\varepsilon,\tau}}$, $\varepsilon\in[0,\varepsilon_0]$, 
admits one, and only one, invariant measure, and the corresponding marginals  converge as $t\to\infty$  to the  invariant measure in the total variation norm (with exponential rate).

\medskip

 \begin{proposition}\label{P2.3}
 	Under \ttup{A1}-\ttup{A3}, there exists a  measure $\uppsi(\D x)$ on $\mathbb{T}_\tau^\n$ such that
 	
 	\medskip
 	
 	\begin{itemize}
 		\item [\ttup{i}] $\mathrm {supp}(\uppsi)$ has nonempty interior;
 		
 		\medskip
 		
 		\item [\ttup{ii}] for every $x \in \mathbb{T}_\tau^\n$ and $\varepsilon\in[0,\varepsilon_0]$ there is $t_{x,\varepsilon}\geq 0$ such that
 		$$\uppsi(B)\,>\,0\,\Longrightarrow\,\bar p^{\varepsilon,\tau}(t,x,B)\,>\,0\qquad  \forall\, t\in[t_{x,\varepsilon},\infty)\,.
 		$$
 	\end{itemize}
 	\end{proposition}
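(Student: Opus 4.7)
The plan is to take $\uppsi$ to be Lebesgue measure on the torus restricted to a closed ball sitting strictly inside $\mathscr{O}$, and to derive \ttup{ii} from a local positive-density estimate on that ball combined with the reachability guarantee of \ttup{A3}. Concretely, pick an open ball $B_0$ with $\overline{B_0}\subset\mathscr{O}$ and a smaller closed ball $\overline{B'_0}\subset B_0$ (since $\mathscr{O}\subset[0,\tau]$, these descend to the torus). Define $\uppsi$ to be Lebesgue measure on $\overline{B'_0}$; then $\mathrm{supp}(\uppsi)=\overline{B'_0}\supset B'_0$ is an open subset of $\mathbb{T}^\n_\tau$, which yields \ttup{i}.

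For \ttup{ii}, I would first establish a \emph{local density fact}: for every $\varepsilon\in[0,\varepsilon_0]$, $s>0$ and $y\in\overline{B'_0}$, and every Borel $B$ with $\uppsi(B)>0$, one has $\bar p^{\varepsilon,\tau}(s,y,B)>0$. Indeed, by \Cref{R2.2}\ttup{iii}, $a$ is uniformly elliptic on $\overline{\mathscr{O}}\supset\overline{B_0}$, and by \ttup{A2}\ttup{ii} the coefficients are H\"older continuous there. Classical parabolic theory for uniformly elliptic operators with H\"older-continuous coefficients (Aronson-type lower bounds, or Friedman's construction of the fundamental solution for the Dirichlet problem) then produces a jointly continuous, strictly positive transition density $q^\varepsilon_{B_0}(s,y,z)>0$ on $(0,\infty)\times B_0\times B_0$ for the process killed upon exiting $B_0$; hence
\[
\bar p^{\varepsilon,\tau}(s,y,B)\,\ge\,\int_{B\cap\overline{B'_0}}q^\varepsilon_{B_0}(s,y,z)\,\D z\,>\,0.
\]

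The reachability step then combines \ttup{A3} with the strong Markov property. Let $\tilde\sigma\df\inf\{t\ge0\colon\bar X^{\varepsilon,\tau}(x,t)\in\overline{B'_0}\}$. Assumption \ttup{A3} gives positive probability of entering $\mathscr{O}$; applied at the (strong Markov) entry time, the local density fact above implies positive probability of subsequently visiting any open subset of $\mathscr{O}$, in particular $B'_0$, within any short time window. Hence $\Prob(\tilde\sigma<\infty)>0$, so there exists $T>0$ with $\Prob(\tilde\sigma\le T)>0$. Set $t_{x,\varepsilon}\df T+1$. For $t\ge t_{x,\varepsilon}$ and any $B$ with $\uppsi(B)>0$, the strong Markov property at $\tilde\sigma$ yields
\[
\bar p^{\varepsilon,\tau}(t,x,B)\,\ge\,\bbE\!\UGL{\bbjedan_{\{\tilde\sigma\le T\}}\,\bar p^{\varepsilon,\tau}\bigl(t-\tilde\sigma,\bar X^{\varepsilon,\tau}(x,\tilde\sigma),B\bigr)}\,>\,0,
\]
where strict positivity of the integrand on $\{\tilde\sigma\le T\}$ follows from the local density fact applied with $y=\bar X^{\varepsilon,\tau}(x,\tilde\sigma)\in\overline{B'_0}$ and $s=t-\tilde\sigma\ge1$.

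The main obstacle I expect is the rigorous justification of the strictly positive Dirichlet-type density $q^\varepsilon_{B_0}$, holding uniformly in $\varepsilon\in[0,\varepsilon_0]$; since the drift $b+\varepsilon c$ depends on $\varepsilon$, one has to verify that the classical Aronson/Friedman lower bounds are not destroyed by this perturbation, which should follow from boundedness of $c$ (e.g.\ via a Girsanov change of measure from the $\varepsilon=0$ dynamics on the fixed ball $B_0$).
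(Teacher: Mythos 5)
Your overall strategy matches the paper's: obtain a strictly positive transition density for the process killed at the boundary of a region where $a(x)$ is uniformly elliptic, then use \ttup{A3} and the Markov property to propagate positivity from arbitrary starting points. The paper establishes this density on all of $\overline{\mathscr{O}}+\tau$ (via Durrett's Theorems 7.3.6--7.3.7) and then applies Chapman--Kolmogorov after the first entry into $\mathscr{O}+\tau$.

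Your version has a concrete gap in the reachability step. Your local density fact is established only for the process killed at $\partial B_0$, and only for starting points $y\in\overline{B'_0}\subset B_0$, where $B_0$ is a ball whose closure sits strictly inside $\mathscr{O}$. However, \ttup{A3} only guarantees that the process enters $\mathscr{O}+\tau$ with positive probability, not that it enters $B_0$. At the first entry time into $\mathscr{O}$ the process may be anywhere in $\mathscr{O}$, possibly far from $B_0$, and your local density fact says nothing about reaching $B'_0$ from such a point. The sentence claiming the local density fact ``implies positive probability of subsequently visiting any open subset of $\mathscr{O}$'' from the entry point is exactly the missing link: what you have proved is positivity of transitions within $B_0$, not positivity of transitions from a generic point of $\mathscr{O}$ into $B'_0$. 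To close this you would need either to build the strictly positive killed density directly on all of $\mathscr{O}$ (as the paper does, exploiting that $\mathscr{O}$ is open, connected, and uniformly elliptic), or to run a chaining argument through a finite sequence of overlapping balls covering a path inside the connected set $\mathscr{O}$ from the entry point to $B'_0$. As written, neither is done, so the argument does not yet establish item \ttup{ii}. The obstacle you flagged at the end (uniformity of Aronson bounds in $\varepsilon$) is not actually an issue here --- $\varepsilon$ is fixed in the statement of \ttup{ii} --- but the reachability-within-$\mathscr{O}$ step is.
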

 
 \begin{proof}
 	According to \cite[Theorems 7.3.6 and 7.3.7]{Durrett-Book-1996}  there is a strictly positive function $q^\varepsilon(t,x,y)$ on $(0,\infty)\times \overline{\mathscr{O}}+\tau\times \overline{\mathscr{O}}+\tau$, jointly continuous in $t$, $x$ and $y$,  
 		satisfying
 		$$
 		 \mathbb{E} \left[f(\bar X^\varepsilon(x,t))\,\mathbb{1}_{(t,\infty]}\bigl(\bar\uptau^{\varepsilon,x}_{(\overline {\mathscr{O}}+\tau)^c}\bigr)\right]\,=\,\int_{\overline {\mathscr{O}}+\tau}f(y)\,q^\varepsilon(t,x,y) \,\D y$$ for all $t>0$, $x \in \overline {\mathscr{O}}+\tau$ and  $f\in\mathcal{C}_b(\R^\n,\R)$.
 	By employing dominated convergence theorem it is straightforward to check that
the above relation holds also for $\mathbb{1}_{\mathscr{U}+\tau}(x)$ for any open set $\mathscr{U}\subseteq\mathscr{O}$.
 	Denote by $\mathcal{D}$ the class of all $B \in \mathfrak{B}(\mathscr{O})$  such that 
 	$$ 
 	\Prob \bigl(\bar X^\varepsilon(x,t) \in B+\tau\,,\, \bar\uptau^{\varepsilon,x}_{(\overline{\mathscr{O}}+\tau)^c}>t \bigr)\,=\,\int_{B+\tau}q^\varepsilon(t,x,y)\, \D y\,.
 	$$
 	Obviously, $\mathcal{D}$ contains the $\pi$-system of open rectangles in $\mathfrak{B}(\mathscr{O})$, and forms a $\lambda$-system. Thus, Dynkin's $\pi$-$\lambda$ theorem implies that $\mathcal{D}=\mathfrak{B}(\mathscr{O})$, and 
 	for all $t>0$, $x \in \overline{\mathscr{O}}+\tau$ and $B \in \mathfrak{B}([0,\tau])$  we have
 	$$
 	\bar p^\varepsilon(t,x,B+\tau)\,\ge\, \int_{(B\cap \mathscr{O})+\tau}q^\varepsilon (t,x,y)\, \D y\,.
 	$$
 	Set now $\overline\uppsi (B+\tau)\df\uplambda ((B\cap \mathscr{O})+\tau)$, $B\in\mathfrak{B}([0,\tau])$,
 	where $\uplambda(\D x)$ stands for the Lebesgue measure on $\R^\n$. Clearly, by construction, 
 	$\overline\uppsi(\D x)$ is a measure on $\sigma$-algebra $\mathfrak{B}([0,\tau])+\tau\df\{B+\tau\colon B\in\mathfrak{B}([0,\tau])\}$, $\mathrm{supp}(\overline\uppsi)$ has non-empty interior, and 
 	 for $B \in \mathfrak{B}([0,\tau])$ it holds that
 	\begin{equation}\label{EP2.3}
 	\overline\uppsi (B+\tau)\,>\,0\, \Longrightarrow\, \bar p^\varepsilon (t,x,B+\tau) \,>\,0 \qquad \forall \,(t,x)\in (0,\infty)\times\overline{\mathscr{O}}+\tau \,. 
 \end{equation}
 To this end, it remains to show that for each $x\in([0,\tau]\setminus\overline{\mathscr{O}})+\tau$  there is $t_{x,\varepsilon}\ge0$ such that the implication in \cref{EP2.3} holds   for all $t\ge t_{x,\varepsilon}$. Since $\process{\bar X^\varepsilon}$ has continuous sample paths and $\mathscr{O}$ is an open set, we have that  
 	\begin{align*}
 	\sum_{t \in \mathbb{Q}_+} \bar p^\varepsilon(t,x,\mathscr{O}+\tau)&\,\geq\, \Prob\bigl(\exists t \in \mathbb{Q}_+ \text{ such that }\bar X^\varepsilon(x,t)\in \mathscr{O}+\tau\bigl)\\
 	&\,=\,
 	\Prob \bigl(\exists t \geq 0 \text{ such that }\bar X^\varepsilon(x,t) \in \mathscr{O}+\tau\bigr)\\
 	&\,=\,\Prob\bigl(\bar\uptau^{\varepsilon,x}_{\mathscr{O}+\tau}<\infty\bigr)\,.
 	\end{align*}
 	From \ttup{A3} we see that there is $t_{x,\varepsilon}\in\mathbb{Q}_+$ such that $\bar p^\varepsilon(t_{x,\varepsilon},x,\mathscr{O}+\tau)>0$. Let $B \in \mathfrak{B}([0,\tau])$ be such that $\overline\uppsi(B+\tau)>0$. For any $t> t_{x,\varepsilon}$ we then have
 	\begin{align*}
 	\bar p^\varepsilon(t,x,B+\tau)&\,\ge\, \Prob\bigl(\bar X^\varepsilon(x,t)\in B+\tau\,,\, \bar X^\varepsilon(x,t_{x,\varepsilon}) \in \mathscr{O}+\tau\bigr)\\&\,=\,\int_{\mathscr{O}+\tau} \bar p^\varepsilon(t-t_{x,\varepsilon},y,B+\tau)\,\bar p^\varepsilon(t_{x,\varepsilon},x,\D y)\,,
 \end{align*}
 	which is strictly positive because of \cref{EP2.3}. The result now follows by setting $\uppsi(B):=\overline\uppsi(\Pi_{\tau}^{-1}(B))$, $B\in\mathfrak{B}(\mathbb{T}_\tau^\n)$, and using \cref{ES2.5}.
 	\end{proof}
 
 \medskip
 
 From \Cref{P2.3} it immediately follows that $\process{\bar X^{\varepsilon,\tau}}$ is  irreducible in the sense of \cite{Down-Meyn-Tweedie-1995}, that is, $\uppsi(B)>0$ implies that $\int_0^\infty\bar p^{\varepsilon,\tau}(t,x,B)\,\D t>0$ for all $x\in\mathbb{T}_\tau^\n$. This automatically entails that $\process{\bar X^{\varepsilon,\tau}}$ admits one, and only one, invariant probability measure $\uppi^\varepsilon(\D x)$. Namely, 
 according to \cite[Theorem 2.3]{Tweedie-1994} every irreducible Markov process is either  transient or recurrent. Due to the fact that   $\process{\bar X^{\varepsilon,\tau}}$  admits at least one invariant probability measure it clearly cannot be transient. The assertion now follows  from \cite[Theorem 2.6]{Tweedie-1994} which  states that every recurrent Markov process admits a unique (up to constant multiplies) invariant measure.

 \medskip

 \begin{proposition} \label{P2.4}Under \ttup{A1}-\ttup{A3},
 	there are $\gamma>0$ and $\varGamma>0$, such that $$\sup_{x\in\mathbb{T}_\tau^\n}\|\bar p^{\varepsilon,\tau}(t,x,\D y)-\uppi^\varepsilon(\D y)\|_{\mathrm{TV}}\,\le\,\varGamma\E^{-\gamma t}\qquad \forall\,(\varepsilon,t)\in[0,\varepsilon_0]\times[0,\infty)\,,$$ where $\lVert\cdot\rVert_{\mathrm{TV}}$ denotes the total variation norm on the space of
 	signed measures on $\mathfrak{B}(\mathbb{T}_\tau^\n)$.
 		\end{proposition}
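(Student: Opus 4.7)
The plan is to establish a uniform-in-$\varepsilon$ Doeblin minorization on the torus and then invoke the classical total-variation contraction argument. Concretely, the goal is to produce constants $T_\ast>0$, $\delta_\ast\in(0,1)$ and a probability measure $\nu(\D x)$ on $\mathbb{T}^\n_\tau$ such that
\begin{equation*}
\bar p^{\varepsilon,\tau}(T_\ast,x,B)\,\ge\,\delta_\ast\,\nu(B)\qquad\forall\,(\varepsilon,x,B)\in[0,\varepsilon_0]\times\mathbb{T}^\n_\tau\times\mathfrak{B}(\mathbb{T}^\n_\tau)\,.
\end{equation*}
Given such a bound, the standard coupling decomposition $\bar p^{\varepsilon,\tau}(T_\ast,x,\cdot)=\delta_\ast\,\nu(\cdot)+(1-\delta_\ast)\,r^\varepsilon(x,\cdot)$ produces the pairwise contraction $\|\bar p^{\varepsilon,\tau}(T_\ast,x,\cdot)-\bar p^{\varepsilon,\tau}(T_\ast,y,\cdot)\|_{\mathrm{TV}}\le 2(1-\delta_\ast)$. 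Iterating by semigroup composition and using invariance of $\uppi^\varepsilon$ yields $\sup_x\|\bar p^{\varepsilon,\tau}(kT_\ast,x,\cdot)-\uppi^\varepsilon\|_{\mathrm{TV}}\le 2(1-\delta_\ast)^k$, from which the claim follows with $\gamma=-\log(1-\delta_\ast)/T_\ast$ and $\varGamma=2/(1-\delta_\ast)$.

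To construct the minorization at fixed $\varepsilon$, I would fix an open set $\mathscr{U}$ with $\overline{\mathscr{U}}\subset\mathscr{O}$ and a $\tau$-periodic $\varphi\in\mathcal{C}_b(\R^\n,[0,1])$ supported in $\mathscr{O}+\tau$ and identically $1$ on $\mathscr{U}+\tau$. The strictly positive continuous density $q^\varepsilon(t,x,y)$ from the proof of \Cref{P2.3} yields, for any fixed $s_0>0$, a lower bound $q^\varepsilon(s_0,y,z)\ge c_\varepsilon>0$ on the compact set $\overline{\mathscr{U}}\times\overline{\mathscr{U}}$. Next, by \ttup{A3} combined with the $\mathcal{C}_b$-Feller property, the continuous function $x\mapsto \bar{\mathcal{P}}^\varepsilon_t\varphi(x)$ is strictly positive at every $x\in\mathbb{T}^\n_\tau$ for some $t=t(x,\varepsilon)>0$; a finite open covering argument on the compact torus then yields a single $T_\varepsilon>0$ and $\eta_\varepsilon>0$ with $\inf_{x\in\mathbb{T}^\n_\tau}\bar{\mathcal{P}}^\varepsilon_{T_\varepsilon}\varphi(x)\ge\eta_\varepsilon$. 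Chapman-Kolmogorov at time $T_\varepsilon+s_0$ then delivers the minorization at fixed $\varepsilon$ with $\nu$ proportional to Lebesgue measure restricted to $\mathscr{U}$ and a constant of the form $c_\varepsilon\eta_\varepsilon\uplambda(\mathscr{U})$.

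The main obstacle is upgrading $(c_\varepsilon,\eta_\varepsilon,T_\varepsilon)$ to constants independent of $\varepsilon\in[0,\varepsilon_0]$. For $\varepsilon$ in a neighborhood of $0$ this is where \Cref{P2.1} enters decisively: the uniform convergence $\|\bar{\mathcal{P}}^\varepsilon_{T_0}\varphi-\bar{\mathcal{P}}^0_{T_0}\varphi\|_\infty\to 0$ transfers the strictly positive infimum $\inf_x\bar{\mathcal{P}}^0_{T_0}\varphi(x)>0$ to all sufficiently small $\varepsilon$ with the same $T_0$ and, say, a halved lower bound. For $\varepsilon$ in a compact subinterval of $(0,\varepsilon_0]$ one argues in parallel using continuous dependence of the drift in \cref{ES2.2} on $\varepsilon$, which yields the same weak-convergence statement as \Cref{P2.1} along any sequence $\varepsilon_k\to\varepsilon_\star$ and allows the extraction of a common positive lower bound by a compactness argument in the parameter. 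The density lower bound $c_\varepsilon$ is handled uniformly by passing to the limit in the density representation used for \Cref{P2.3} along the same scheme. Combining these uniform bounds produces the required Doeblin minorization and completes the proof.
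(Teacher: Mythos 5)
Your approach is genuinely different from the paper's. The paper invokes the Meyn--Tweedie/Down--Meyn--Tweedie machinery: it first shows $\mathbb{T}^\n_\tau$ is petite (via the $\mathcal{C}_b$-Feller property and \Cref{P2.3}), then aperiodicity, and then cites the $V$-uniform ergodicity theorem of Down--Meyn--Tweedie with $V\equiv 1$ and $C=\mathbb{T}^\n_\tau$. You instead build an explicit Doeblin minorization on the torus and run the classical coupling iteration. Your route is more elementary, more quantitative, and --- notably --- confronts the $\varepsilon$-uniformity head-on, whereas the paper's citation-based proof is somewhat silent on why the constants produced by the cited theorems do not depend on $\varepsilon$. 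This is a point in your favour: an explicit minorization makes the uniformity visible rather than implicit.

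That said, several steps are stated at a sketch level where care is needed. First, in the Chapman--Kolmogorov step you need the density lower bound on $(\supp\varphi\cap[0,\tau])\times\overline{\mathscr{U}}$, not merely on $\overline{\mathscr{U}}\times\overline{\mathscr{U}}$, since the post-$T_\varepsilon$ distribution is weighted by $\varphi$, which is supported in $\mathscr{O}+\tau$; for this to be a compact subset of $\mathscr{O}\times\mathscr{O}$ one must take $\supp\varphi$ compactly contained in $\mathscr{O}+\tau$, which you should say explicitly. Second, the covering argument that produces a single $T_\varepsilon$ is subtler than a plain finite subcover: the pointwise times $t(x,\varepsilon)$ vary with $x$, and passing to the maximum $T_\varepsilon$ requires the \emph{persistence of positivity} established in the proof of \Cref{P2.3}, namely that once $\bar p^\varepsilon(s,x,\mathscr{O}+\tau)>0$, the lower bound on $\bar p^\varepsilon(t,x,B)$ holds for \emph{all} $t>s$; without this, a lower bound at time $t(x_i,\varepsilon)+1$ does not obviously transfer to a larger common time. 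Third, the uniformity in $\varepsilon$ of the killed-process density lower bound $c_\varepsilon$ is asserted but not argued; it needs a continuity-in-$\varepsilon$ statement for the killed semigroup's heat kernel on a compact subset of $\mathscr{O}$, analogous to but not a consequence of \Cref{P2.1}. Finally, for $\varepsilon_\star\in(0,\varepsilon_0]$ you invoke a weak-convergence statement along $\varepsilon_k\to\varepsilon_\star$ which is not in the paper (which only proves the case $\varepsilon_\star=0$); it is provable by the same argument as \cref{ES2.4} and \Cref{P2.1}, but should be stated as such. All four gaps are fillable, so the strategy is sound; the write-up needs the above points discharged to be complete.
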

 \begin{proof} First, \cite[Theorems 5.1 and 7.1]{Tweedie-1994} together with the $\mathcal{C}_b$-Feller property of $\process{\bar X^{\varepsilon,\tau}}$ and \Cref{P2.3} imply that $\mathbb{T}_\tau^\n$ is a petite set for  $\process{\bar X^{\varepsilon,\tau}}$ (see \cite{Tweedie-1994} for  the definition of petite sets). Next, from \cite[Theorem 4.2]{Meyn-Tweedie-AdvAP-III-1993} (with $c=d=1$, $C=\mathbb{T}_\tau^\n$, $f(x)=V(x)\equiv1$, and $\mathcal{A}V(x)\equiv0$), \Cref{P2.3} (which implies that  $\sum_{i=1}^\infty\bar p^{\varepsilon,\tau}(i,x,B)>0$ for all $x\in\mathbb{T}_\tau^\n$ whenever $\uppsi(B)>0$), and \cite[Proposition 6.1]{Meyn-Tweedie-AdvAP-II-1993} we see that $\process{\bar X^{\varepsilon,\tau}}$ is aperiodic in the sense of \cite{Down-Meyn-Tweedie-1995}. The desired result now follows from \cite[Theorem 5.2]{Down-Meyn-Tweedie-1995} by taking $c=b=1$, $C=\mathbb{T}_\tau^\n$, $\tilde V(x)\equiv1$, and $\tilde{\mathcal{A}}\tilde V(x)\equiv0$.
 	\end{proof}
 
 \medskip
 
 From \Cref{P2.4} we see that for any $f\in\mathcal{B}_b(\mathbb{T}_\tau^\n,\R)$ satisfying $\uppi^\varepsilon(f)=0$, $\varepsilon\in[0,\varepsilon_0]$, it holds that \begin{equation}\label{ES2.7}\|\bar{\mathcal{P}}_t^{\varepsilon,\tau}f\|_\infty\,\le\,\varGamma\|f\|_\infty\E^{-\gamma t} \qquad \forall\,  t\ge0\,.\end{equation}
 
 \medskip
 
 \begin{proposition}\label{P2.5}Under \ttup{A1}-\ttup{A3},
 	$$\uppi^\varepsilon(\D x)\,\xRightarrow[\varepsilon\to0]{({\rm w})}\,\uppi^0(\D x)\,.$$ 
 		\end{proposition}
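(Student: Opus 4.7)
The goal is to prove $\uppi^\varepsilon(f)\to\uppi^0(f)$ for every $f\in\mathcal{C}(\mathbb{T}_\tau^\n,\R)$, which is equivalent to the asserted weak convergence since $\mathbb{T}_\tau^\n$ is compact. The plan is to combine the uniform-in-$\varepsilon$ exponential ergodicity from \Cref{P2.4} with the finite-time convergence of semigroups from \Cref{P2.1}, via a standard triangle-inequality diagonal.

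Fix such an $f$, viewed interchangeably as a continuous $\tau$-periodic function on $\R^\n$, and pick any $x\in\mathbb{T}_\tau^\n$. For every $t\geq 0$ and $\varepsilon\in[0,\varepsilon_0]$,
$$|\uppi^\varepsilon(f)-\uppi^0(f)| \,\le\, |\uppi^\varepsilon(f)-\bar{\mathcal{P}}_t^{\varepsilon,\tau}f(x)| + |\bar{\mathcal{P}}_t^{\varepsilon,\tau}f(x)-\bar{\mathcal{P}}_t^{0,\tau}f(x)| + |\bar{\mathcal{P}}_t^{0,\tau}f(x)-\uppi^0(f)|.$$
The two boundary terms are each at most $2\varGamma\|f\|_\infty \E^{-\gamma t}$: apply \cref{ES2.7} to the centered functions $f-\uppi^\varepsilon(f)$ and $f-\uppi^0(f)$, noting that \Cref{P2.4} gives constants $\gamma,\varGamma$ that do not depend on $x$ or on $\varepsilon\in[0,\varepsilon_0]$. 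Given $\eta>0$, fix $t_\eta$ so large that $2\varGamma\|f\|_\infty \E^{-\gamma t_\eta}<\eta/3$. For this fixed $t_\eta$, the identification \cref{ES2.5} yields $\bar{\mathcal{P}}_{t_\eta}^{\varepsilon,\tau}f(x)=\bar{\mathcal{P}}_{t_\eta}^\varepsilon f(z_x)$ for any lift $z_x\in\Pi_\tau^{-1}(\{x\})$, so \Cref{P2.1} renders the middle term strictly less than $\eta/3$ uniformly in $x$ for all sufficiently small $\varepsilon$. Summing, $|\uppi^\varepsilon(f)-\uppi^0(f)|<\eta$ for such $\varepsilon$, and letting $\eta\downto 0$ delivers the pointwise limit and hence the weak convergence.

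I do not anticipate a substantive obstacle: the proof is a direct combination of ingredients already in hand, and the only points needing attention are the uniformity of the constants in \Cref{P2.4} in $\varepsilon\in[0,\varepsilon_0]$ (explicit in the statement) and the fact that \Cref{P2.1} is a supremum-norm convergence, hence uniform in $x$. An alternative packaging would be to observe that $\{\uppi^\varepsilon\}_{\varepsilon\in[0,\varepsilon_0]}$ is automatically tight on the compact torus, extract a weakly convergent subsequence, pass to the limit in the invariance identity $\uppi^\varepsilon\bar{\mathcal{P}}_t^{\varepsilon,\tau}=\uppi^\varepsilon$ using \Cref{P2.1} to identify any subsequential limit as an invariant measure of $\process{\bar X^{0,\tau}}$, and then invoke the uniqueness statement established just before \Cref{P2.4}. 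The triangle-inequality route, however, is both more quantitative and cleaner, so I prefer it.
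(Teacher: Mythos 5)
Your triangle-inequality argument is correct, and it is a genuinely different route from the one the paper takes — indeed, the paper's proof is essentially the \emph{alternative packaging} you sketch and set aside. The paper exploits compactness of $\mathbb{T}_\tau^\n$ to extract a weakly convergent subsequence $\{\uppi^{\varepsilon_{i_j}}\}$, then shows that any subsequential limit $\bar\uppi^0$ satisfies $\bar\uppi^0(f)=\bar\uppi^0(\bar{\mathcal{P}}_t^{0,\tau}f)$ by the same use of \Cref{P2.1} you make, and finally invokes the uniqueness of the invariant measure established just before \Cref{P2.4} to conclude $\bar\uppi^0=\uppi^0$. That argument is soft: it never touches the exponential rate in \Cref{P2.4}, relying only on \Cref{P2.1}, tightness, and uniqueness. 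Your argument, by contrast, uses the uniform-in-$\varepsilon$ exponential bound \cref{ES2.7} as a quantitative replacement for the compactness extraction, which buys you an explicit two-parameter estimate ($t$ large, then $\varepsilon$ small) and avoids the subsequence-of-subsequences bookkeeping; the mild cost is that you must lean on the full strength of \Cref{P2.4} (uniformity of $\gamma,\varGamma$ over $\varepsilon\in[0,\varepsilon_0]$), whereas the paper only needs \Cref{P2.4} implicitly through the uniqueness of $\uppi^0$. Both proofs are sound; yours is cleaner and more quantitative, the paper's is the more standard soft-compactness template.
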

 	\begin{proof}
 		Since $\mathbb{T}_\tau^\n$ is compact the family of probability measures $\{\uppi^\varepsilon(\D x)\}_{\varepsilon\ge0}$ is  tight. Hence, for any sequence $\{\varepsilon_i\}_{i\in\N}\subset[0,\varepsilon_0]$ converging to $0$ there is a further subsequence $\{\varepsilon_{i_j}\}_{j\in\N}$ such that $\{\uppi^{\varepsilon_{i_j}}(\D x)\}_{j\in\N}$ converges weakly to some probability measure $\bar\uppi^0(\D x)$. Take $f\in\mathcal{C}(\mathbb{T}_\tau^\n,\R)$, and fix $t\ge0$ and $\epsilon>0$. From \Cref{P2.1} we have that there is $0<\varepsilon_1\le\varepsilon_0$ such that $$\|\bar{\mathcal{P}}^{\varepsilon,\tau}_tf-\bar{\mathcal{P}}^{0,\tau}_tf\|_\infty\,\le\,\epsilon\qquad \forall\,\varepsilon\in[0,\varepsilon_1]\,.$$ 
 		We now have that
 		\begin{align*}
 		|\bar\uppi^{0}(f)-\bar\uppi^0\bigl(\bar{\mathcal{P}}_t^{0,\tau}f\bigr)|&\,=\,\lim_{j\to\infty}|\uppi^{\varepsilon_{i_j}}(f)-\bar\uppi^0\bigl(\bar{\mathcal{P}}_t^{0,\tau}f\bigr)|\\&\,=\,\lim_{j\to\infty}|\bar\uppi^{\varepsilon_{i_j}}\bigl(\bar{\mathcal{P}}_t^{\varepsilon_{i_j},\tau}f\bigr)-\bar\uppi^0\bigl(\bar{\mathcal{P}}_t^{0,\tau}f\bigr)|\\&\le\,
 		\limsup_{j\to\infty}|\uppi^{\varepsilon_{i_j}}\bigl(\bar{\mathcal{P}}_t^{\varepsilon_{i_j},\tau}f\bigr)-\bar\uppi^{\varepsilon_{i_j}}\bigl(\bar{\mathcal{P}}_t^{0,\tau}f\bigr)| +\lim_{j\to\infty}|\uppi^{\varepsilon_{i_j}}\bigl(\bar{\mathcal{P}}_t^{0,\tau}f\bigr)-\bar\uppi^0\bigl(\bar{\mathcal{P}}_t^{0,\tau}f\bigr)|\\
 		&\,\le\,\epsilon\,,
 		\end{align*}
 	which implies that  $\bar\uppi^0(\D x)$ is an invariant probability measure for $\process{X^{0,\tau}}$. Thus,  $\bar\uppi^0(\D x)=\uppi^0(\D x)$, which proves the  assertion.
 \end{proof}

 
\section{CLT for the process $\process{X^{\varepsilon}}$}
\label{S3}

Under the assumption that $a(x)$ is uniformly elliptic  (that is, $\mathscr{O}=(0,\tau_1)\times\cdots\times(0,\tau_\dd)$) and twice continuously differentiable,  and that  $b,c\in\mathcal{C}^2(\R^\n,\R^n)$ (in particular \ttup{A1}-\ttup{A3} are automatically satisfied with $\theta(v)=v$, $\gamma=1$ and $\mathscr{O}=(0,\tau_1)\times\cdots\times(0,\tau_\dd)$), in \cite[Theorem 3.4.4]{Bensoussan-Lions-Papanicolaou-Book-1978} 
it has been shown that: (i) the equation $\bar{\mathcal{A}}^0\beta(x)=b(x)-\uppi^0(b)$ admits a unique $\tau$-periodic solution $\beta\in C^2(\R^\n,\R^\n)$,
and (ii) it holds that \begin{equation}\label{ES3.1}\{ X^{\varepsilon}(x,t)-\varepsilon^{-1}\uppi^0(b)t \}_{t\ge0}\,\xRightarrow[\varepsilon\to0]{({\rm d})}\,\{W^{\mathsf{a},\mathsf{b}}(x,t)\}_{t\ge0}\,,\end{equation} where $\{W^{\mathsf{a},\mathsf{b}}(x,t)\}_{t\ge0}$ is a  $n$-dimensional   Brownian motion   determined by covariance matrix and drift vector   \begin{equation}\label{ES3.2} \mathsf{a}\,=\,\uppi^0\bigl((\Id_\n-\DD\beta)\,a\,(\Id_\n-\DD\beta)^{\mathrm{T}}\bigr)  \qquad\text{and}\qquad \mathsf{b}\,=\,\uppi^0\bigl((\Id_\n-\DD\beta)\,c\bigr)\,,\end{equation} respectively.
In this section, we derive an analogous results for the case when $a(x)$ is not necessarily uniformly elliptic.   From \ttup{A1}-\ttup{A2}, that is, from \cref{ES2.7}, we see that the function $$x\,\mapsto\,-\int_0^\infty\bar\PP_t^0\bigl(b-\uppi^0(b)\bigr)(x)\,\D t\,,\qquad x\in\R^\n\,,$$ (which we again denote by $\beta(x)$) is well defined, $\tau$-periodic, continuous, and satisfies $\beta\in\mathcal{D}_{\bar{\mathcal{A}}^0}$ and $\bar{\mathcal{A}}^0\beta(x)=b(x)-\uppi^0(b)$.  Note that under the uniform ellipticity (and smoothness) assumption this function coincides with the function $\beta(x)$ discussed above. A crucial step in the proof of \cref{ES3.1} in the uniformly elliptic case is an application of It\^{o}'s formula to the process $\{\beta(X^{\varepsilon}(x,t))\}_{t\ge0}$ (recall that in this case $\beta\in C^2(\R^\n,\R^\n)$). On the other hand, in the case when the coefficient $a(x)$ can be degenerate it is not clear how to conclude necessary smoothness of $\beta(x)$.

\medskip

\subsection{The case  $c(x)\equiv0$}\label{S3.1}
Observe first that  \begin{equation}\label{ES3.3}\process{X^{\varepsilon}}=\{\varepsilon \bar X^{\varepsilon}(x/\varepsilon,t/\varepsilon^2)\}_{t\ge0} \stackrel{(\rm{d})}{=}\{\varepsilon \bar X^{0}(x/\varepsilon,t/\varepsilon^2)\}_{t\ge0}\,.\end{equation}
We  now conclude the following.

\medskip

\begin{theorem} \label{T3.1} Under \ttup{A1}-\ttup{A3},
	the relation in \cref{ES3.1} holds with $$\mathsf{a}\,=\,\uppi^0\bigl(a-\bar a-\bar a^\mathrm{T}-\beta\,\bigl(\bar{\mathcal{A}}^{0,\tau}\beta\bigl)^{\mathrm{T}}-\bigl(\bar{\mathcal{A}}^{0,\tau}\beta\bigr)\,\beta^{\mathrm{T}}\bigr)\qquad \text{and}\qquad \mathsf{b}\,=\,0\,,$$  where   $\bar a\in\mathcal{B}(\R^\n,\R^{\n\times\n})$ is $\tau$-periodic and such that $\uppi^0(\HS{\bar a})<\infty$.
\end{theorem}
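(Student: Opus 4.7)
The plan is to combine the distributional identity~\cref{ES3.3} with a Poisson corrector $\beta$ for $\bar{\mathcal{A}}^0$, replace the missing It\^o expansion of $\beta(\bar X^0)$ by the Dynkin martingale, and then apply the functional martingale CLT together with the exponential ergodicity of \Cref{P2.4}. The main technical point is that $\beta$ is only known to be bounded and continuous, so its quadratic variation has to be accessed through Motoo's theorem rather than It\^o calculus.

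First I would define $\beta(x)\df-\int_0^\infty \bar{\mathcal P}_t^0(b-\uppi^0(b))(x)\,\D t$. By~\cref{ES2.7} the integral converges uniformly, so $\beta$ is bounded, continuous, $\tau$-periodic, $\beta\in\mathcal D_{\bar{\mathcal A}^0}$, and $\bar{\mathcal A}^0\beta=b-\uppi^0(b)$. The Dynkin formula then yields the continuous local martingale
$$M^\beta_t \df \beta\bigl(\bar X^0(x,t)\bigr)-\beta(x)-\int_0^t\bigl(b-\uppi^0(b)\bigr)\bigl(\bar X^0(x,s)\bigr)\,\D s.$$
Substituting this into~\cref{ES2.3} and invoking~\cref{ES3.3}, the centered process decomposes, in distribution, as
$$X^\varepsilon(x,t)-\varepsilon^{-1}\uppi^0(b)\,t \,=\, x + R^\varepsilon_t + N^\varepsilon_t,$$
where $R^\varepsilon_t=\varepsilon\bigl[\beta\bigl(\bar X^0(x/\varepsilon,t/\varepsilon^2)\bigr)-\beta(x/\varepsilon)\bigr]$ vanishes uniformly in $t$ as $\varepsilon\to0$ (since $\beta$ is bounded), and
$$N^\varepsilon_t \,=\, \varepsilon\!\int_0^{t/\varepsilon^2}\!\upsigma\bigl(\bar X^0\bigr)\,\D B(s) \,-\, \varepsilon\,M^\beta_{t/\varepsilon^2}$$
is a continuous local martingale.

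The main step, and the one I expect to be the hard part, is the identification of $\langle N^\varepsilon\rangle_t$. Only the It\^o piece $\varepsilon^2\!\int_0^{t/\varepsilon^2} a(\bar X^0)\,\D s$ is immediate; neither $\langle M^\beta\rangle$ nor $\langle M^\beta,\int\upsigma\,\D B\rangle$ can be read off It\^o's formula because $\beta$ may fail to be $\mathcal C^2$. Here I would invoke Motoo's theorem~\cite[Proposition~3.56]{Cinlar-Jacod-Protter-Sharpe-1980}, applied to the continuous additive functional martingale $M^\beta$ and to its cross-variation with the It\^o martingale $\int \upsigma\,\D B$, to produce $\tau$-periodic, $\uppi^0$-integrable matrix-valued densities $\bar a$ and $q$ such that
$$\langle N^\varepsilon\rangle_t \,=\, \varepsilon^2\!\int_0^{t/\varepsilon^2}\bigl(a-\bar a-\bar a^{\mathrm T}+q\bigr)\bigl(\bar X^0(x/\varepsilon,s)\bigr)\,\D s.$$
The rigorous application of Motoo's theorem at this level of regularity, and the verification that both $\bar a$ and $q$ are $\tau$-periodic and $\uppi^0$-integrable, is where the bulk of the work sits.

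With $\langle N^\varepsilon\rangle_t$ in this form, the ergodic theorem for $\process{\bar X^{0,\tau}}$ (a consequence of \Cref{P2.4}) gives $\langle N^\varepsilon\rangle_t \to t\,\uppi^0(a-\bar a-\bar a^{\mathrm T}+q)$ in probability. Invariance of $\uppi^0$ formally gives $\uppi^0\bigl(\bar{\mathcal A}^0(\beta\beta^{\mathrm T})\bigr)=0$, and combined with the carr\'e-du-champ identity $q=\bar{\mathcal A}^0(\beta\beta^{\mathrm T})-\beta(\bar{\mathcal A}^{0,\tau}\beta)^{\mathrm T}-(\bar{\mathcal A}^{0,\tau}\beta)\beta^{\mathrm T}$, this yields $\uppi^0(q)=-\uppi^0\bigl(\beta(\bar{\mathcal A}^{0,\tau}\beta)^{\mathrm T}+(\bar{\mathcal A}^{0,\tau}\beta)\beta^{\mathrm T}\bigr)$, matching the stated expression for $\mathsf a$. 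The drift $\mathsf b=0$ follows because the centering is exact. The functional martingale CLT (e.g.~\cite[Theorem VIII.3.11]{Jacod-Shiryaev-2003}) applied to the continuous martingale $N^\varepsilon$, together with $R^\varepsilon\to 0$ uniformly, then delivers~\cref{ES3.1}.
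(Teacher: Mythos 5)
Your decomposition, the use of Motoo's theorem to produce a $\tau$-periodic density for the bracket of the non-It\^o martingale, and the ergodic passage via \Cref{P2.4} all line up with the paper's proof. The one genuine divergence is in the identification of $\uppi^0(q)$ (the paper's $\uppi^0(\bar a^{11})$): you invoke a carr\'e-du-champ identity $q=\bar{\mathcal A}^0(\beta\beta^\mathrm{T})-\beta(\bar{\mathcal A}^{0,\tau}\beta)^\mathrm{T}-(\bar{\mathcal A}^{0,\tau}\beta)\beta^\mathrm{T}$ together with $\uppi^0(\bar{\mathcal A}^0(\beta\beta^\mathrm{T}))=0$, while the paper's main computation avoids this entirely and instead expands $\mathbb{E}[\langle M_1\rangle_{1/\varepsilon^2}]$ directly, using only \Cref{P2.4} and the semigroup contraction \cref{ES2.7}. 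Your route is precisely the alternative the authors sketch in \Cref{R3.2}(iii) via Kunita's result: since $\beta$ is only continuous, $\beta\beta^\mathrm{T}$ is \emph{not} in the $\mathcal{B}_b$-domain of $\bar{\mathcal{A}}^0$, so ``$\bar{\mathcal{A}}^0(\beta\beta^\mathrm{T})$'' has no a priori meaning; what one has is the function $h(x)=\bar a^{11}(x)+\beta(\bar{\mathcal A}^0\beta)^\mathrm{T}+(\bar{\mathcal A}^0\beta)\beta^\mathrm{T}$ produced by \cite[Corollary to Theorem 3.1]{Kunita-1969}, which makes $\beta(\bar X^0)\beta(\bar X^0)^\mathrm{T}-\int_0^\cdot h(\bar X^0)\,\D s$ a local martingale, and then $\uppi^0(h)=0$ follows by a localization and dominated-convergence argument, not by a bare appeal to invariance. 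Your hedge ``formally gives'' is doing a lot of work here; the rigorous version is exactly Kunita's theorem, and you should cite it rather than a generic carr\'e-du-champ.

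Two smaller omissions worth flagging: before applying Motoo's theorem you need $\D\langle M^\beta\rangle_t\ll\D t$ and $\D\langle M^\beta,\int\upsigma\,\D B\rangle_t\ll\D t$, which the paper obtains from the martingale representation theorem on the Brownian filtration; and Motoo's theorem as stated in \cite[Proposition 3.56]{Cinlar-Jacod-Protter-Sharpe-1980} is for scalar continuous additive functionals, so the paper first passes to the scalar brackets $\langle M_k^i\pm M_l^j\rangle$ and reconstructs the matrix densities by polarization. Neither point is deep, but both are needed for the Motoo step to go through as you use it. With those two fixes and the Kunita citation in place, your argument is a correct and somewhat cleaner alternative to the paper's direct expansion of $\uppi^0(\bar a^{11})$.
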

\begin{proof}
	We have
	\begin{equation}\label{EPT3.1}
	\begin{aligned}
	& X^\varepsilon(x,t)-x-\varepsilon^{-1}\uppi^0(b)t-\varepsilon\beta\bigl(X^\varepsilon(x,t)/\varepsilon\bigr)+\varepsilon\beta(x/\varepsilon)\\
	&
	\,\stackrel{(\rm{d})}{=}\,
\varepsilon \bar X^0(x/\varepsilon,t/\varepsilon^2)-x-\varepsilon^{-1}\uppi^0(b)t-\varepsilon\beta\bigl(\bar X^0(x/\varepsilon,t/\varepsilon^2)\bigr)+\varepsilon\beta(x/\varepsilon)\\
	&
	\,=\,
		-\varepsilon\beta\bigl(\bar X^0(x/\varepsilon,t/\varepsilon^2)\bigr)+\varepsilon\beta(x/\varepsilon)+\varepsilon\int_0^{\varepsilon^{-2}t}\bigl(b\bigl(\bar X^0(x/\varepsilon,s)\bigr)-\uppi^0(b)\bigr)\,\D s\\
	&\ \ \ \ \ +\varepsilon\int_0^{\varepsilon^{-2}t}\upsigma\bigl(\bar X^0(x/\varepsilon,s)\bigr)\,\D B(s)\qquad \forall\, t\ge0\,.
	\end{aligned}
	\end{equation}
	Due to boundedness of $\beta(x)$, $\{ X^{\varepsilon}( x,t)-x-\varepsilon^{-1} \uppi^0(b)t\}_{t\ge0}$ converges in law if, and only if,
	$\{X^{\varepsilon}( x,t)-x-\varepsilon^{-1}\uppi^0(b)t-\varepsilon\beta(X^\varepsilon(x,t)/\varepsilon)+\varepsilon\beta(x/\varepsilon)\}_{t\ge0}$    converges, and if this is the case the limit is the same. 
	Denote \begin{align*}
	M_1(x,t)&\,\df\,\beta\bigl(\bar X^0(x,t)\bigr)-\beta(x)-\int_0^{t}\bigl(b\bigl(\bar X^0(x,s)\bigr)-\uppi^0(b)\bigr)\,\D s\,,\qquad t\ge0\,,\\
	M_2(x,t)&\,\df\, \int_0^{t}\upsigma\bigl(\bar X^0(x,s)\bigr)\,\D B(s)\,,\qquad t\ge0\,.
	\end{align*}
According to  \cite[Proposition 4.1.7]{Ethier-Kurtz-Book-1986} the processes
$\process{M_k}$, $k=1,2$, are $\{\mathcal{F}_t\}_{t\ge0}$-martingales. Hence,
 $\{X^{\varepsilon}( x,t)-x-\varepsilon^{-1}\uppi^0(b)t-\varepsilon\beta(X^\varepsilon(x,t)/\varepsilon)+\varepsilon\beta(x)\}_{t\ge0}$ is also a $\{\mathcal{F}_t\}_{t\ge0}$-martingale. Next, let $\{\theta_t\}_{t\ge0}$ be the family of  shift operators on  $(\Omega,\mathcal{F},\{\mathcal{F}_t\}_{t\ge0})$ satisfying $\theta_s \circ\bar X^0(x,t)=\bar X^0(x,s+t)$ for all $s,t\ge0$  (see \cite[p. 119]{Oksendal-Book-2003}). 
 We have that $$M_k(x,s+t)\,=\,M_k(x,t)+\theta_t\circ M_k(x,s)\,,\qquad s,t\ge0\,,\quad k=1,2\,.$$ In other words,  the processes
 $\process{M_k}$, $k=1,2$, are continuous additive martingales with respect to $\process{\bar X^0}$, in the sense of \cite{Cinlar-Jacod-1981}.
 Observe that $$M_2(x,t)\,=\,\bar X^0(x,t)-x-\int_0^tb\bigl(\bar X^0(x,s)\bigr)\,\D s\,,\qquad t\ge0\,.$$
 	According to \cite[Theorem VIII.2.17]{Jacod-Shiryaev-2003}, in order to conclude \cref{ES3.1} it suffices to show that \begin{equation}\label{EPT3.1A}\varepsilon^2\langle-M_1(x/\varepsilon,\cdot)+M_2(x/\varepsilon,\cdot),-M_1(x/\varepsilon,\cdot)+M_2(x/\varepsilon,\cdot)\rangle_{t/\varepsilon^2}\,\xrightarrow[\varepsilon\to0]{\Prob}\,\mathsf{a}\, t\,,\qquad t\ge0\,.\end{equation} Here, $\ \xrightarrow[]{\Prob}$ stands for the convergence in probability, and 
 	for two locally square-integrable martingales $\{M(t)\}_{t\ge0}$ and   $\{N(t)\}_{t\ge0}$, $\{\langle M(\cdot),N(\cdot)\rangle_t\}_{t\ge0}$ denotes the corresponding predictable quadratic covariation process and we write $\{\langle M(\cdot)\rangle_t\}_{t\ge0}$ instead of $\{\langle M(\cdot),M(\cdot)\rangle_t\}_{t\ge0}$.
 
	For $i,j=1,\dots,\dd$ and $k,l=1,2$, we have that 
	\begin{equation*}
		\langle M_k^i(x,\cdot),M_l^j(x,\cdot)\rangle_{t}\,=\, 4^{-1}\bigl(\langle M_k^i(x,\cdot)+M_l^j(x,\cdot)\rangle_{t}-\langle M_k^i(x,\cdot)-M_l^j(x,\cdot)\rangle_{t}\bigr)\,,\qquad t\ge0\,.
	\end{equation*} 
	Next, form the martingale representation theorem we see that for each $i,j=1,\dots\dd$ and $k,l=1,2$ it holds that $\D\langle M^i_k(x,\cdot)\pm M^j_l(x,\cdot)\rangle_t\ll \D t$. Thus, due to $\tau$-periodicity of the coefficients and the fact that $\{\bar X^0(x+k_\tau,t)\}_{t\ge0}$ and $\{\bar X^0(x,t)+k_\tau\}_{t\ge0}$ are indistinguishable for all $x\in\R^\dd$ and $k_\tau\in\ZZ_\tau^\dd$,  \cite[Proposition 3.56]{Cinlar-Jacod-Protter-Sharpe-1980} implies  that for each $i,j=1,\dots,\dd$ and $k,l=1,2$ there is a non-negative and $\tau$-periodic $\tilde{a}^{k,l\pm}_{ij}\in\mathcal{B}(\R^\dd,\R)$ such that 
	\begin{equation*}
		\langle M^i_k(x,\cdot)\pm M^j_l(x,\cdot)\rangle_{t}\,=\,\int_0^t \tilde{a}^{kl\pm}_{ij}\left(\bar X^0(x,s)\right)\, \D s\,, \qquad t\ge0\,.
	\end{equation*}
	Due to boundedness of $b(x),$ $\beta(x)$ and $\upsigma(x)$ we have 
	\begin{align*}
		\uppi^0(\tilde a^{kl\pm}_{ij})&\,=\,\int_{\mathbb{T}_\tau^\dd}\mathbb{E} \left[\int_0^1\tilde a^{kl\pm}_{ij}\left(\bar X^{0,\tau}(x,s)\right)\, \D s\right]\,\uppi^0(\D x)\\
		&\,=\,\int_{\mathbb{T}_\tau^\dd}\mathbb{E} \left[\langle M^i_k(z_x,\cdot)\pm M^j_l(z_x,\cdot)\rangle_{1}\right]\,\uppi^0(\D x)\\
		&\,=\,\int_{\mathbb{T}_\tau^\dd}\mathbb{E} \left[ \left(M^i_k(z_x,1)\pm M^j_l(z_x,1)\right)^2\right]\,\uppi^0(\D x) \\
		&\,\le\,2\int_{\mathbb{T}_\tau^\dd}\mathbb{E} \left[ \left(M^i_k(z_x,1)\right)^2\right]\,\uppi^0(\D x)+2 \int_{\mathbb{T}_\tau^\dd}\mathbb{E} \left[ \left(M^j_l(z_x,1)\right)^2\right]\,\uppi^0(\D x) \,<\,\infty\,,
	\end{align*} 
	where $z_x\in\Pi_\tau^{-1}(\{x\})$ is arbitrary. Set now $\bar a^{kl}_{ij}(x)\df (\tilde a^{kl+}_{ij}(x)-\tilde a^{kl-}_{ij}(x))/4$, and $\bar a^{kl} (x)\df (\bar a^{kl}_{ij}(x))_{i,j=1,\dots,\dd}$. Clearly, for all $k,l=1,2$, $\bar a^{kl}\in\mathcal{B}(\R^\dd,\R^{\dd\times\dd})$ is  $\tau$-periodic, and satisfies $\uppi^0(\HS{\bar a^{kl}})<\infty$ and
	\begin{equation*}
		\langle M_k(x,\cdot),M_l(x,\cdot)\rangle_{t}\,=\,\int_0^t \bar a^{kl}\left(\bar X^0(x,s)\right)\, \D s\,, \qquad t\ge0\,.
	\end{equation*} Furthermore, $\bar a^{11}(x)$ and $\bar a^{22}(x)$ are symmetric and non-negative definite.
	Directly from \Cref{P2.4} and Birkhoff ergodic theorem it follows that for all $t \ge 0$
	\begin{equation}\label{EPT3.1B}
		\varepsilon^2\langle M_k(x/\varepsilon,\cdot),M_l(x/\varepsilon,\cdot) \rangle_{t/\varepsilon^2}\,=\,\varepsilon^2\int_0^{t/\varepsilon^2}\bar a^{kl}\left(\bar X^{0,\tau}(\Pi_\tau(x/\varepsilon),s)\right)\,\D s\,\xrightarrow[\varepsilon\to0]{\Prob\text{-a.s.}}\,\uppi^0(\bar a^{kl})\, t\,.
	\end{equation}

	It remains to determine $\uppi^0(\bar a^{kl})$. For $k=l=1$ dominated convergence theorem implies that 
	\begin{align*}
  		\uppi^0(\bar a^{11}) &\,=\,\lim_{\varepsilon \to 0}\varepsilon^2\int_{\mathbb{T}_\tau^\n}\mathbb{E}\left[\langle M_1(z_x,\cdot),M_1(z_x,\cdot)\rangle_{1/\varepsilon^2}\right]\uppi^0(\D x)\\
  		&\,=\,\lim_{\varepsilon \to 0}\varepsilon^2\int_{\mathbb{T}_\tau^\n}\mathbb{E}\Bigg[\left(\int_0^{\varepsilon^{-2}}\bar{\mathcal{A}}^{0}\beta\bigl(\bar X^{0}(z_x,s)\bigr)\,\D s\right)\left( \int_0^{\varepsilon^{-2}}\bar{\mathcal{A}}^{0}\beta\bigl(\bar X^{0}(z_x,s)\bigr)\,\D s\right)^{\mathrm{T}}\\
  		&\hspace{3cm} -\left(\int_0^{\varepsilon^{-2}}\bar{\mathcal{A}}^{0}\beta\bigl(\bar X^{0}(z_x,s)\bigr)\,\D s\right)\bigl(\beta\bigl(\bar X^0(z_x,1/\varepsilon^2)\bigr)-\beta(z_x)\bigr)^\mathrm{T}\\
  		&\hspace{3cm} -\bigl(\beta\bigl(\bar X^0(z_x,1/\varepsilon^2)\bigr)-\beta(z_x)\bigr)\,\left(\int_0^{\varepsilon^{-2}}\bar{\mathcal{A}}^{0}\beta\bigl(\bar X^{0}(z_x,s)\bigr)\,\D s\right)^{\mathrm{T}}\Bigg]\,\uppi^0(\D x)\\
  		&\,=\,\lim_{\varepsilon \to 0}\varepsilon^2\int_{\mathbb{T}_\tau^\n}\mathbb{E}\Bigg[\left(\int_0^{\varepsilon^{-2}}\bar{\mathcal{A}}^{0,\tau}\beta\bigl(\bar X^{0,\tau}(x,s)\bigr)\,\D s\right)\left( \int_0^{\varepsilon^{-2}}\bar{\mathcal{A}}^{0,\tau}\beta\bigl(\bar X^{0,\tau}(x,s)\bigr)\,\D s\right)^{\mathrm{T}}\\
  		&\hspace{3cm} -\left(\int_0^{\varepsilon^{-2}}\bar{\mathcal{A}}^{0,\tau}\beta\bigl(\bar X^{0,\tau}(x,s)\bigr)\,\D s\right)\bigl(\beta\bigl(\bar X^{0,\tau}(x,1/\varepsilon^2)\bigr)\bigr)^\mathrm{T}\\
  		&\hspace{3cm} -\bigl(\beta\bigl(\bar X^{0,\tau}(x,1/\varepsilon^2)\bigr)\bigr)\,\left(\int_0^{\varepsilon^{-2}}\bar{\mathcal{A}}^{0,\tau}\beta\bigl(\bar X^{0,\tau}(x,s)\bigr)\,\D s\right)^{\mathrm{T}}\Bigg]\,\uppi^0(\D x)\\
  		&\ \ \ \ \ +\int_{\mathbb{T}_\tau^\n}\left(\lim_{\varepsilon \to 0}\varepsilon^2\int_0^{\varepsilon^{-2}}\bar\PP_s^{0,\tau}\bar{\mathcal{A}}^{0,\tau}\beta(x)\,\D s\right)\bigl(\beta(x)\bigr)^\mathrm{T}\uppi^0(\D x)\\
  		&\ \ \ \ \ +\int_{\mathbb{T}_\tau^\n}\beta(x) \left(\lim_{\varepsilon \to 0}\varepsilon^2\int_0^{\varepsilon^{-2}}\bar\PP_s^{0,\tau}\bar{\mathcal{A}}^{0,\tau}\beta(x)\,\D s\right)^\mathrm{T}\uppi^0(\D x)\\
  		&\,=\, \lim_{\varepsilon \to 0}\varepsilon^2\int_{\mathbb{T}_\tau^\n}\mathbb{E}\bigg[2\int_0^{\varepsilon^{-2}}\int_s^{\varepsilon^{-2}}\bar{\mathcal{A}}^{0,\tau}\beta\bigl(\bar X^{0,\tau}(x,v)\bigr)\bigl(\bar{\mathcal{A}}^{0,\tau}\beta\bigl(\bar X^{0,\tau}(x,s)\bigr)\bigr)^{\mathrm{T}}\,\D v\,\D s\\
		&\hspace{3cm} -\left(\int_0^{\varepsilon^{-2}}\bar{\mathcal{A}}^{0,\tau}\beta\bigl(\bar X^{0,\tau}(x,s)\bigr)\,\D s\right)\bigl(\beta\bigl(\bar X^{0,\tau}(x,1/\varepsilon^{2})\bigr)\bigr)^{\mathrm{T}}\\
		&\hspace{3cm} -\bigl(\beta\bigl(\tilde X^{0,\tau}(x,1/\varepsilon^2)\bigr)\bigr)\left(\int_0^{\varepsilon^{-2}}\bar{\mathcal{A}}^{0,\tau}\beta\bigl(\bar X^{0,\tau}(x,s)\bigr)\,\D s\right)^{\mathrm{T}}\bigg]\,\uppi^0(\D x)\\
		&\ \ \ \ \ +\uppi^0(\bar{\mathcal{A}}^{0,\tau}\beta)\, \uppi^0(\beta^\mathrm{T})+ \uppi^0\bigl((\bar{\mathcal{A}}^{0,\tau}\beta)^\mathrm{T}\bigr)\, \uppi^0(\beta)\\
		&\,=\,\lim_{\varepsilon \to 0}\varepsilon^2\int_{\mathbb{T}_\tau^\n}\mathbb{E}\bigg[\int_0^{\varepsilon^{-2}}\left(\int_s^{\varepsilon^{-2}}\bar{\mathcal{A}}^{0,\tau}\beta\bigl(\bar X^{0,\tau}(x,v)\bigr)\,\D v-\beta\bigl(\bar X^{0,\tau}(x,1/\varepsilon^2)\bigr)\right)\\&\hspace{4cm}\bigl(\bar{\mathcal{A}}^{0,\tau}\beta\bigl(\bar X^{0,\tau}(x,s)\bigr)\bigr)^{\mathrm{T}}\,\D s\\
		&\hspace{3cm} +\int_0^{\varepsilon^{-2}}\bar{\mathcal{A}}^{0,\tau}\beta\bigl(\bar X^{0,\tau}(x,s)\bigr)\\&\hspace{4cm}\left(\int_s^{\varepsilon^{-2}}\bar{\mathcal{A}}^{0,\tau}\beta\bigl(\tilde X^{0,\tau}(x,v)\bigr)\,\D v-\beta\bigl(\bar X^{0,\tau}(x,1/\varepsilon^2)\bigr)\right)^{\mathrm{T}}\,\D s\bigg]\,\uppi^0(\D x)\,.
	\end{align*}
	Set $$M_1^\tau(x,t)\,\df\, \beta\bigl(\bar X^{0,\tau}(x,t)\bigr)-\beta(x)-\int_0^{t}\bar{\mathcal{A}}^{0,\tau}\beta\bigl(\bar X^{0,\tau}(x,s)\bigr)\,\D s\,,\qquad t\ge0\,.$$ Clearly, $\process{M_1^\tau}$ is a $\{\mathcal{F}_t\}_{t\ge0}$-martingale. We now have
	\begin{align*}
		\uppi^0(\bar a^{11})&\,=\,\lim_{\varepsilon \to 0}\varepsilon^2\int_{\mathbb{T}_\tau^\n}\mathbb{E}\bigg[\int_0^{\varepsilon^{-2}}\left(M_1^\tau(x,s)-M_1^\tau(x,1/\varepsilon^2)-\beta\bigl(\bar X^{0,\tau}(x,s)\bigr)\right)\\
		&\hspace{4cm}\bigl(\bar{\mathcal{A}}^{0,\tau}\beta\bigl(\bar X^{0,\tau}(x,s)\bigr)\bigr)^{\mathrm{T}}\,\D s\\
		&\ \ \ \  +\int_0^{\varepsilon^{-2}}\bar{\mathcal{A}}^{0,\tau}\beta\bigl(\bar X^{0,\tau}(x,s)\bigr)\left(M^\tau_1(x,s)-M_1^\tau(x,1/\varepsilon^2)-\beta\bigl(\bar X^{0,\tau}(x,s)\bigr)\right)^{\mathrm{T}}\,\D s\bigg]\,\uppi^0(\D x)\\
		&\,=\, -\lim_{\varepsilon \to 0}\varepsilon^2\int_{\mathbb{T}_\tau^\n}\mathbb{E}\bigg[\int_0^{\varepsilon^{-2}}\beta\bigl(\bar X^{0,\tau}(x,s)\bigr)\,\bigl(\bar{\mathcal{A}}^{0,\tau}\beta\bigl(\bar X^{0,\tau}(x,s)\bigr)\bigr)^{\mathrm{T}}\,\D s\\
		&\hspace{3.5cm} +\int_0^{\varepsilon^{-2}}\bar{\mathcal{A}}^{0,\tau}\beta\bigl(\bar X^{0,\tau}(x,s)\bigr)\,\bigl(\beta\bigl(\bar X^{0,\tau}(x,s)\bigr)\bigr)^{\mathrm{T}}\,\D s\bigg]\,\uppi^0(\D x)\\
		&\,=\,-\int_{\mathbb{T}_\tau^\n}\lim_{\varepsilon \to 0}\varepsilon^2\int_0^{\varepsilon^{-2}}\bar\PP^{0,\tau}_s\bigl(\beta(\cdot)\,\bigl(\bar{\mathcal{A}}^{0,\tau}\beta(\cdot)\bigr)^{\mathrm{T}}+\bar{\mathcal{A}}^{0,\tau}\beta(\cdot)\,\bigl(\beta(\cdot)\bigr)^{\mathrm{T}}\bigr)(x)\,\D s\,\uppi^0(\D x)\\
		&\,=\,-\uppi^0\bigl(\beta\,\bigl(\bar{\mathcal{A}}^{0,\tau}\beta\bigl)^{\mathrm{T}}+\bigl(\bar{\mathcal{A}}^{0,\tau}\beta\bigr)\,\beta^{\mathrm{T}}\bigr)\,.
		\end{align*} 
		
		For $k=l=2$ it follows from \cite[Proposition 2.5]{Bhattacharya-1982} that $\bar \uppi^0(\bar a^{22})=\uppi^0(a)$. For mixed terms we have $\langle M_1(x,\cdot),M_2(x,\cdot)\rangle_{t}= \langle M_2(x,\cdot),M_1(x,\cdot)\rangle_{t}^\mathrm{T}$, $t\ge0$. Therefore $\bar a(x) \df \bar a^{12}(x)=\left(\bar a^{21}(x)\right)^\mathrm{T}$, which completes the proof.
\end{proof}

\medskip

Let us now  give several remarks.

\medskip

\begin{remark}\label{R3.2} \begin{itemize}
		\item [\ttup{i}] Notice that 
	$$\mathsf{a}\,=\,\lim_{t \to \infty}\,\frac{1}{t}\int_{\mathbb{T}_\tau^\n}\mathbb{E}\left[\bigl(\bar X^{0}(z_x,t)-z_x-\uppi^0(b)t\bigr)\bigl(\bar X^{0}(z_x,t)-z_x-\uppi^0(b)t\bigr)^{\mathrm{T}}\right]\,\uppi^0(\D x)\,.$$ 
	
	\medskip
	
	\item[\ttup{ii}] In the proof of \Cref{T3.1} we did not use the full strength of the assumptions in \ttup{A1}-\ttup{A3}. It is straightforward to check that the assertion of the theorem holds if for $\tau$-periodic $\upsigma\in\mathcal{B}_b(\R^\n,\R^{\n\times\m})$ and $b\in\mathcal{B}_b(\R^\n,\R^\n)$  the  SDE in \cref{ES2.3} admits a  unique strong solution  which is a time-homogeneous  non-explosive strong Markov process whose projection  on $\mathbb{T}_\tau^\n$ (under $\Pi_\tau(x)$) satisfies the conclusion of \Cref{P2.4} (for $\varepsilon=0$). 
	
		\medskip
		
		\item[\ttup{iii}] In the second part of the proof of \Cref{T3.1} we show that 
		\begin{equation}\label{ER3.2}\left\{\varepsilon\int_0^{\varepsilon^{-2}t}\bar{\mathcal {A}}^0\beta\bigl(\bar X^0_s\bigr)\,\D s\right\}_{t\ge0}\,\xRightarrow[\varepsilon\to0]{({\rm d})}\,\{W^{\mathsf{\Sigma},0}(0,t)\}_{t\ge0}\,,\end{equation} where $\{W^{\mathsf{\Sigma},0}(0,t)\}_{t\ge0}$ is a   $n$-dimensional zero-drift  Brownian motion   (starting from the origin) determined by covariance matrix   $\mathsf{\Sigma}=-\uppi^0(\beta\,(\bar{\mathcal{A}}^{0,\tau}\beta)^{\mathrm{T}}+(\bar{\mathcal{A}}^{0,\tau}\beta)\,\beta^{\mathrm{T}})$.  The proof is based on (a version of) Motoo's theorem given in \cite[Proposition 3.56]{Cinlar-Jacod-Protter-Sharpe-1980}, combined with \Cref{P2.4} and Birkhoff ergodic theorem. 
		It also follows from \cite[Remark  2.1.1]{Bhattacharya-1982} (together with \Cref{P2.4} and \cite[Proposition 2.5]{Bhattacharya-1982}), which is based on a CLT for stationary ergodic sequences given in \cite[Chapter 4.19]{Billingsley-Book-1999}. 	Furthermore, by setting $$h(x)\,\df\, \bar a^{11}(x)+\beta(x)\, \bigl(\bar{\mathcal{A}}^{0}\beta (x)\bigr)^\mathrm{T}+\bar{\mathcal{A}}^{0}\beta (x)\,\bigl(\beta(x)\bigr)^\mathrm{T}\,,\qquad x\in\R^\n\,,$$
		in \cite[Corollary to Theorem 3.1]{Kunita-1969} it has been shown that $$\left\{\beta\bigl(\bar X^0(x,t)\bigr) \bigl(\beta\bigl(\bar X^0(x,t)\bigr)\bigr)^\mathrm{T}-\beta(x)\bigl(\beta(x)\bigr)^\mathrm{T}-\int_0^{t} h\bigl(\bar X^0(x,s)\bigr)\,\D s\right\}_{t\ge0}$$ is a $\{\mathcal{F}_t\}_{t\ge0}$-local martingale. Let  $\{\uptau_k^x\}_{k\ge1}$ be the corresponding localizing sequence. Then, $$\mathbb{E}\left[\beta\bigl(\bar X^0(x,t\wedge\uptau_k^x)\bigr) \bigl(\beta\bigl(\bar X^0(x,t\wedge\uptau_k^x)\bigr)\bigr)^\mathrm{T}\right]-\beta(x)\bigl(\beta(x)\bigr)^\mathrm{T}\,=\,\mathbb{E}\left[\int_0^{t\wedge\uptau_k^x} h\bigl(\bar X^0(x,s)\bigr)\,\D s\right]$$
		for all $k\ge1$ and $t\ge0$.
		Since  $h(x)$ is $\tau$-periodic and satisfies $$\uppi^0\bigl(\HS{h}\bigr)\,<\,\infty\qquad \text{and}\qquad \int_0^t\mathbb{E}\left[\HS{h\bigl(\bar X^0(x,s)\bigr)}\right]\,\D s\,<\,\infty\,,\quad t\ge0\,,$$  by taking $k\to\infty$ in the previous relation, and employing dominated convergence theorem, it follows that $\uppi^0(h)=0$, which again proves that $\uppi^0(\bar a^{11})= -\uppi^0(\beta\,(\bar{\mathcal{A}}^{0,\tau}\beta)^{\mathrm{T}}+(\bar{\mathcal{A}}^{0,\tau}\beta)\,\beta^{\mathrm{T}}).$ See also \cite[VIII.3.65]{Jacod-Shiryaev-2003} for an analogous result.

		Let us also remark that	the CLT of this type   is a very well studied problem in the literature, and it is known that it holds for general ergodic Markov processes (see 
		\cite{Bhattacharya-1982}, \cite[Chapter VIII.3]{Jacod-Shiryaev-2003} and \cite[Chapter 2]{Komorowski-Landim-Olla-Book-2012}).  
		To the best of our knowledge   \cite{Basak-1991} and  \cite{Hashemi-Heunis-2005} are the only two works discussing this problem in the context of    (ergodic) diffusion processes with possibly degenerate diffusion coefficient.
		However, in both works certain ``incremental type'' assumptions on the coefficients have been imposed, which exclude diffusion processes with periodic coefficients.
		Therefore,  it seems that  \Cref{T3.1} is the first result in the literature  showing the relation in \cref{ER3.2} in the case of a periodic diffusion process with degenerate diffusion coefficient.
			\end{itemize}
\end{remark}

\medskip


\subsection{The case  $c(x)\nequiv0$}\label{S3.2} In this case, 
it is not clear that we can perform an  analogous analysis as in \Cref{T3.1}. The difficulty is that the equality in distribution in \cref{ES3.3} does not hold anymore, which implies that the function $\bar a(x)$ (appearing in \Cref{T3.1}) might also depend on the parameter $\varepsilon$. 
In \cite[Lemma 3.2]{Hairer-Pardoux-2008} the authors suggest a solution to this problem but, unfortunately, there seems to be a doubt about its proof. Namely, in the proof  it is assumed that the function $\hat b(x)$ is twice continuously differentiable, but it is  shown that it is  continuously differentiable only. In what follows we  resolve this issue, or at least suggest an alternative approach to the problem. 
We impose an additional assumption on the coefficients $\upsigma(x)$ and $b(x)$, which is taken from \cite{Hairer-Pardoux-2008} (see \cite[Assumption H.3]{Hairer-Pardoux-2008}).
 Let $\upsigma_j(x)\df(\upsigma_{1j}(x),\dots,\upsigma_{\n j}(x))^\mathrm{T}$, $j=1,\dots,\m$, and
let $\mathscr{U}\subseteq[0,\tau]$ be the set where the parabolic H\"{o}rmander condition holds, that is, the set of  $x\in[0,\tau]$ for which the Lie algebra generated by $(b(x),1)\cup\{(\upsigma_1(x),0),\dots,(\upsigma_{\mathrm{m}}(x),0)\}$ spans $\R^{\n+1}$. Observe that $\mathscr{O}\subseteq\mathscr{U}$. 
Assume  the following   

\medskip

\begin{description}
	\item[\ttup{A4}] 
 $\upsigma\in\mathcal{C}^\infty(\R^\n,\R^m)$, $b,c\in\mathcal{C}^\infty(\R^\n,\R^\n)$, and
			$$ \inf_{t>0}\sup_{x\in\R^\n}\mathbb{E}\bigl[\HS{J_x(t)}\,\mathbb{1}_{[t,\infty]}(\bar\uptau^{0,x}_{\mathscr{U}+\tau})\bigr]\,<\,1\,,$$
	\end{description}
where $\{J_x(t)\}_{t\ge0}$ is the Jacobian of the stochastic flow associated to $\process{\bar X^0}$, that is, a solution to \begin{align*}\D J_x(t)&\,=\, \DD b\bigl(\bar X^0(x,t)\bigr)\,J_x(t)\,\D t+\sum_{j=1}^\m\DD\upsigma_j\bigl(\bar X^0(x,t)\bigr)\,J_x(t)\,\D B_j(t)\\
J_x(0)&\,=\,\Id_\n \,.\end{align*}

\noindent As it has been commented in \cite[Remark 2.1]{Hairer-Pardoux-2008}, a simple condition ensuring the above relation to hold is the existence of $t_0>0$ such that $\Prob(\bar\uptau^{0,x}_{\mathscr{U}+\tau}<t_0)=1$ for all $x\in\R^\n$. According to \cite[Lemma II.9.2 and Theorem II.9.5]{Gihman-Skorohod-1972}, smoothness of $\upsigma(x)$, $b(x)$ and $c(x)$ implies that    $\bar\PP_t^{0}f\in\mathcal{C}^k(\R^\n,\R)$  for any $t\ge0$ and $f\in\mathcal{C}_b^k(\R^\n,\R)$, $k=0,1,2$.
Also, under \ttup{A1}-\ttup{A4}, in \cite[Lemma 2.6]{Hairer-Pardoux-2008} it has been shown that there are $\bar\gamma>0$ and $\bar\varGamma>0$, such that \begin{equation}\label{ES3.8}\|\nabla\bar\PP_t^0f(\cdot)\|_\infty\,\le\,\bar\varGamma\bigl(\|f\|_\infty+\|\nabla f(\cdot)\|_\infty \bigr)\E^{-\bar\gamma t}\end{equation} for all $t\ge0$ and $\tau$-periodic $f\in\mathcal{C}^1(\R^\n,\R)$ with $\uppi^0(f)=0$. 
In particular,  $\beta\in \mathcal{C}^1(\R^\n,\R^\n)$.
In what follows we derive an It\^{o}-type formula for the process 
$\{\beta(\bar X^{\varepsilon}(x,t))\}_{t\ge0}$.
Let $(\bar{\mathcal{A}}^{\varepsilon,\tau},\mathcal{D}_{\bar{\mathcal{A}}^{\varepsilon,\tau}})$ be the  $\mathcal{B}_b$-infinitesimal generator   of   $\process{\bar X^{\varepsilon,\tau}}$.
 We start with the following auxiliary lemma.

\medskip

\begin{lemma}\label{L3.3} Assume \ttup{A1}-\ttup{A3},
	and let $f\in\mathcal{C}^2(\R^\n,\R)$ be $\tau$-periodic. Then, $f\in\mathcal{D}_{\bar{\mathcal{A}}^{\varepsilon,\tau}}$, and  $\bar{\mathcal{A}}^{\varepsilon,\tau}f(x)=\bar{\mathcal{A}}^{\varepsilon}f(z_x)$ for all $x\in\mathbb{T}_\tau^\n$ and $z_x\in\Pi^{-1}_\tau(\{x\})$.
\end{lemma}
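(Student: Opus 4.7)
The plan is to reduce the torus-generator statement to the known Euclidean-generator statement via the projection identity \cref{ES2.5}. The main inputs are: (a) any $\tau$-periodic $\mathcal{C}^2$ function on $\R^\n$ belongs to $\mathcal{C}^2_{u,b}(\R^\n,\R)$; (b) by the It\^o-formula remark already stated after assumption \ttup{A1} (together with the rescaling to $\bar X^\varepsilon$), such a function lies in $\mathcal{D}_{\bar{\mathcal{A}}^\varepsilon}$ and $\bar{\mathcal{A}}^\varepsilon f = \bar{\mathcal{L}}^\varepsilon f$, where $\bar{\mathcal{L}}^\varepsilon f \df 2^{-1}\mathrm{Tr}(a\,\nabla\nabla^{\mathrm T}f)+(b+\varepsilon c)^{\mathrm T}\nabla f$.

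First I would verify (a): since $f\in\mathcal{C}^2(\R^\n,\R)$ is $\tau$-periodic, $f$ and all its partials up to order two descend to continuous functions on the compact torus $\mathbb{T}_\tau^\n$, hence are bounded and uniformly continuous on $\R^\n$. So $f\in\mathcal{C}_{u,b}^2(\R^\n,\R)\subseteq\mathcal{D}_{\bar{\mathcal{A}}^\varepsilon}$, and the remark after \ttup{A1} yields $\bar{\mathcal{A}}^\varepsilon f(z)=\bar{\mathcal{L}}^\varepsilon f(z)$ for all $z\in\R^\n$. Because $a(x)$, $b(x)$, $c(x)$ and every derivative of $f$ are $\tau$-periodic, the function $\bar{\mathcal{L}}^\varepsilon f$ is $\tau$-periodic as well and therefore descends unambiguously to a bounded function on $\mathbb{T}_\tau^\n$.

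Next I would establish the semigroup identity $\bar{\mathcal{P}}_t^{\varepsilon,\tau}f(x)=\bar{\mathcal{P}}_t^{\varepsilon}f(z_x)$ for every $x\in\mathbb{T}_\tau^\n$ and every $z_x\in\Pi_\tau^{-1}(\{x\})$. Using \cref{ES2.5} and $\tau$-periodicity of $f$,
\begin{equation*}
\bar{\mathcal{P}}_t^{\varepsilon,\tau}f(x)
\,=\,\int_{\mathbb{T}_\tau^\n}f(y)\,\bar p^{\varepsilon,\tau}(t,x,\D y)
\,=\,\sum_{k_\tau\in\ZZ_\tau^\n}\int_{[0,\tau]+k_\tau}f(y)\,\bar p^{\varepsilon}(t,z_x,\D y)
\,=\,\int_{\R^\n}f(y)\,\bar p^{\varepsilon}(t,z_x,\D y)\,,
\end{equation*}
which equals $\bar{\mathcal{P}}_t^{\varepsilon}f(z_x)$. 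In particular the map $z_x\mapsto\bar{\mathcal{P}}_t^{\varepsilon}f(z_x)$ depends only on $[z_x]_\tau$, which is consistent with the already-noted fact that $\{\bar{\mathcal{P}}_t^{\varepsilon}\}_{t\ge0}$ preserves $\tau$-periodicity.

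Combining the two pieces, for $x\in\mathbb{T}_\tau^\n$ and any choice of representative $z_x$,
\begin{equation*}
\left\lVert\frac{\bar{\mathcal{P}}_t^{\varepsilon,\tau}f-f}{t}-\bar{\mathcal{L}}^\varepsilon f\right\rVert_\infty
\,=\,\sup_{z\in\R^\n}\left|\frac{\bar{\mathcal{P}}_t^{\varepsilon}f(z)-f(z)}{t}-\bar{\mathcal{L}}^\varepsilon f(z)\right|\,\xrightarrow[t\to 0]{}\,0\,,
\end{equation*}
where the first equality uses the identification of periodic bounded functions on $\R^\n$ with bounded functions on $\mathbb{T}_\tau^\n$, and the limit uses $f\in\mathcal{D}_{\bar{\mathcal{A}}^\varepsilon}$. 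This gives $f\in\mathcal{D}_{\bar{\mathcal{A}}^{\varepsilon,\tau}}$ with $\bar{\mathcal{A}}^{\varepsilon,\tau}f(x)=\bar{\mathcal{L}}^\varepsilon f(z_x)=\bar{\mathcal{A}}^\varepsilon f(z_x)$, as claimed. The only mildly delicate point is the semigroup identity; everything else is a direct unwinding of definitions, so I do not anticipate a serious obstacle.
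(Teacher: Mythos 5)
Your proof is correct and follows essentially the same route as the paper's: both hinge on (i) observing that a $\tau$-periodic $\mathcal{C}^2$ function lies in $\mathcal{C}_{u,b}^2(\R^\n,\R)\subseteq\mathcal{D}_{\bar{\mathcal{A}}^\varepsilon}$, and (ii) the semigroup compatibility $\bar{\mathcal{P}}_t^{\varepsilon,\tau}f(x)=\bar{\mathcal{P}}_t^{\varepsilon}f(z_x)$ coming from \cref{ES2.5}. The paper's proof is a compressed version that leaves step (ii) implicit, which you spell out.
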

\begin{proof}
	As we have already commented, $f\in\mathcal{D}_{\bar{\mathcal{A}}^{\varepsilon}}$ and $$\bar{\mathcal{A}}^{\varepsilon}f(x)\,=\,\frac{1}{2}\,\mathrm{Tr}\bigl(a(x)\nabla\nabla^\mathrm{T}f(x)\bigr)+
\bigl( b(x)+\varepsilon c(x)\bigr)^\mathrm{T}\nabla f(x)\,.$$
	Thus,
	\begin{equation*}\lim_{t \to 0}\left\|\left(\bar\PP_t^{\varepsilon,\tau}f(\cdot)-f(\cdot)\right)/t-\bar{\mathcal{A}}^{\varepsilon}f(z_\cdot)\right\|_\infty\,=\,\lim_{t \to 0}\left\|\left(\bar\PP_t^{\varepsilon}f-f\right)/t-\bar{\mathcal{A}}^{\varepsilon}f\right\|_\infty\,=\,0\,.\qedhere\end{equation*}
\end{proof}

\medskip

\noindent 
Let $f\in\mathcal{C}(\R^\n,\R)$ be $\tau$-periodic. 
Define  $$\phi(x)\,\df\,-\int_0^\infty\bar\PP_t^0\bigl(f-\uppi^0(f)\bigr)(x)\,\D t\,,\qquad x\in\R^\n\,.$$ According to  \cref{ES2.7} this function  is well defined, $\tau$-periodic, continuous, and satisfies $\phi\in\mathcal{D}_{\bar{\mathcal{A}}^0}$ and $\bar{\mathcal{A}}^0\phi(x)=f(x)-\uppi^0(f)$.

\medskip

\begin{lemma}\label{L3.4} Assume $f\in\mathcal{C}^2(\R^\n,\R)$. Under \ttup{A1}-\ttup{A4} 
	it holds  that \begin{align*}
	\phi\bigl(\bar X^\varepsilon(x,t)\bigr)\,=\,& \phi(x)+\int_0^t(f-\uppi^0(f))\bigl(\bar X^\varepsilon(x,s)\bigr)\, \D s+\varepsilon \int_0^t\bigl(\bigl(\nabla\phi\bigr)^\mathrm{T}c\bigr)\bigl(\bar X^\varepsilon(x,s)\bigr)\,\D s\\&+\int_0^t\bigl(\bigl(\nabla\phi\bigr)^\mathrm{T}\upsigma\bigr)\bigl(\bar X^\varepsilon(x,s)\bigr)\,\D B^\varepsilon(s)\qquad \forall\,t\ge0\,.
	\end{align*}
\end{lemma}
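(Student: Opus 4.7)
By \cref{ES3.8} applied to $f-\uppi^0(f)$, the function $\phi$ is well defined, $\tau$-periodic and of class $\mathcal{C}^1(\R^\n,\R)$, but there is no a priori reason for $\phi$ to belong to $\mathcal{C}^2$. Since It\^o's formula requires two derivatives, this is the principal obstacle, and it is precisely the gap flagged in the discussion of \cite[Lemma 3.2]{Hairer-Pardoux-2008}. The saving observation is that the target identity involves only $\phi$ and $\nabla\phi$; the second derivatives have been absorbed into the source term via the Poisson equation $\bar{\mathcal{A}}^0\phi = f-\uppi^0(f)$. The strategy is therefore to apply the classical It\^o formula to a $\mathcal{C}^2$ truncation of $\phi$ and pass to the limit in the uniform topology.

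\textbf{Truncation and It\^o for $\phi_T$.} For $T>0$ set
$$\phi_T(x) \,\df\, -\int_0^T \bar\PP_t^0\bigl(f-\uppi^0(f)\bigr)(x)\,\D t\,,\qquad x\in\R^\n\,.$$
Under \ttup{A4}, \cite[Lemma II.9.2 and Theorem II.9.5]{Gihman-Skorohod-1972} (as cited above \cref{ES3.8}) imply that $\bar\PP_t^0$ maps $\mathcal{C}_b^2$ into $\mathcal{C}^2$ with derivatives depending continuously on $t$ on compact intervals, so $\phi_T$ is $\mathcal{C}^2$ and $\tau$-periodic. Differentiating under the integral and using $\frac{\D}{\D t}\bar\PP_t^0g = \bar{\mathcal{A}}^0\bar\PP_t^0g$ gives
$$\bar{\mathcal{A}}^0\phi_T \,=\, (f-\uppi^0(f)) - \bar\PP_T^0\bigl(f-\uppi^0(f)\bigr)\,.$$
Applying the classical It\^o formula to $\phi_T\bigl(\bar X^\varepsilon(x,\cdot)\bigr)$ along \cref{ES2.2}, and writing the full generator of $\process{\bar X^\varepsilon}$ as $\bar{\mathcal{A}}^0 + \varepsilon c^\mathrm{T}\nabla$, yields
\begin{align*}
\phi_T\bigl(\bar X^\varepsilon(x,t)\bigr) \,=\,& \phi_T(x) + \int_0^t(f-\uppi^0(f))\bigl(\bar X^\varepsilon(x,s)\bigr)\,\D s\\
&-\int_0^t\bar\PP_T^0\bigl(f-\uppi^0(f)\bigr)\bigl(\bar X^\varepsilon(x,s)\bigr)\,\D s\\
&+\varepsilon\int_0^t\bigl((\nabla\phi_T)^\mathrm{T}c\bigr)\bigl(\bar X^\varepsilon(x,s)\bigr)\,\D s\\
&+\int_0^t\bigl((\nabla\phi_T)^\mathrm{T}\upsigma\bigr)\bigl(\bar X^\varepsilon(x,s)\bigr)\,\D B^\varepsilon(s)\,.
\end{align*}

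\textbf{Passing to the limit $T\to\infty$.} By \cref{ES2.7}, $\|\bar\PP_T^0(f-\uppi^0(f))\|_\infty \le \varGamma\|f\|_\infty\E^{-\gamma T}\to 0$, so $\phi_T\to\phi$ uniformly and the spurious Lebesgue integral on the second line above vanishes. The sharp gradient bound \cref{ES3.8} applied to $f-\uppi^0(f)$ gives
$$\|\nabla\phi-\nabla\phi_T\|_\infty \,\le\, \int_T^\infty\bar\varGamma\bigl(\|f\|_\infty+\|\nabla f\|_\infty\bigr)\E^{-\bar\gamma t}\,\D t \,\xrightarrow[T\to\infty]{}\,0\,,$$
so $\nabla\phi_T\to\nabla\phi$ uniformly as well. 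Dominated convergence then handles the Lebesgue integral against $c$, while It\^o's isometry together with boundedness of $\upsigma$ gives
$$\mathbb{E}\UGL{\OBL{\int_0^t\bigl((\nabla\phi_T-\nabla\phi)^\mathrm{T}\upsigma\bigr)\bigl(\bar X^\varepsilon(x,s)\bigr)\,\D B^\varepsilon(s)}^2} \,\le\, t\|\upsigma\|_\infty^2\|\nabla\phi_T-\nabla\phi\|_\infty^2 \,\xrightarrow[T\to\infty]{}\,0\,;$$
Doob's martingale inequality upgrades this to uniform-in-$t$ convergence in probability on compact time intervals, from which an a.s.\ subsequential limit yields the claimed pathwise identity (the continuity in $t$ of both sides then extends it to every $t\ge0$).

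\textbf{Main obstacle.} The only genuinely delicate point is reconciling the $\mathcal{C}^1$-only regularity of $\phi$ with the $\mathcal{C}^2$ requirement of It\^o's formula; this is resolved by working with the $\mathcal{C}^2$ truncation $\phi_T$ and exploiting that the exponential gradient estimate \cref{ES3.8} from \cite{Hairer-Pardoux-2008} survives the passage to the limit. Once the truncation is in place, the remaining convergences are standard consequences of the ergodic and regularization bounds \cref{ES2.7,ES3.8} already established.
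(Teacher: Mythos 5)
Your argument is correct and is, at its core, the same approach the paper takes: both proofs exploit that the semigroup image $\bar\PP_s^0(f-\uppi^0(f))$ is $\mathcal{C}^2$ under \ttup{A4} (via the cited Gihman--Skorohod results), apply the classical It\^o formula at that level, and then use the exponential decay bounds \cref{ES2.7} and \cref{ES3.8} to pass to the Poisson-equation solution $\phi$. The paper applies It\^o's formula to $\bar\PP_s^0(f-\uppi^0(f))$ for each fixed $s$ and then integrates the resulting identity over $s\in[0,\infty)$, whereas you first integrate over $s\in[0,T]$ to form the $\mathcal{C}^2$ truncation $\phi_T$, apply It\^o once, and send $T\to\infty$; this reorganization is at least as clean, since performing the deterministic integral before It\^o avoids the stochastic Fubini interchange the paper needs when integrating the martingale term in $s$, at the modest cost of having to justify $\bar{\mathcal{A}}^0\phi_T=(f-\uppi^0(f))-\bar\PP_T^0(f-\uppi^0(f))$ by differentiating under the integral (which the paper instead handles via a dominated-convergence computation of $\int_0^\infty\bar{\mathcal{A}}^0\bar\PP^0_s(f-\uppi^0(f))\,\D s$ at the end).
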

\begin{proof}
	As we have already commented, $x\mapsto\bar\PP_s^0(f-\uppi^0(f))(x)$ is twice continuously differentiable for any $s\ge0$. It\^{o}'s formula  then gives
	\begin{equation}\label{EPL3.4A}
	\begin{aligned}
	\bar\PP_s^0(f-\uppi^0(f))\bigl(\bar X^\varepsilon(x,t)\bigr)\,=\,& \bar\PP_s^0(f-\uppi^0(f))(x)+\int_0^t\bar{\mathcal{A}}^\varepsilon\bar\PP^0_s(f-\uppi^0(f))\bigl(\bar X^\varepsilon(x,u)\bigr)\, \D u
	\\&+\int_0^t\bigl(\bigl(\nabla \bar\PP^0_s(f-\uppi^0(f))\bigr)^\mathrm{T}\upsigma\bigr)\bigl(\bar X^\varepsilon(x,u)\bigr)\,\D B^\varepsilon(u)\\
	\,=\,& \bar\PP_s^0(f-\uppi^0(f))(x)+\int_0^t\bar{\mathcal{A}}^0\bar\PP^0_s(f-\uppi^0(f))\bigl(\bar X^\varepsilon(x,u)\bigr)\, \D u\\&+\varepsilon \int_0^t\bigl(\bigl(\nabla \bar\PP^0_s(f-\uppi^0(f))\bigr)^\mathrm{T}c\bigr)\bigl(\bar X^\varepsilon(x,u)\bigr)\,\D u
	\\&+\int_0^t\bigl(\bigl(\nabla \bar\PP^0_s(f-\uppi^0(f))\bigr)^\mathrm{T}\upsigma\bigr)\bigl(\bar X^\varepsilon(x,u)\bigr)\,\D B^\varepsilon(u)
	\,.
	\end{aligned}
	\end{equation} 
	By integrating the previous relation with respect to the time variable $s\in[0,\infty)$ (and recalling the definition of the function $\phi(x)$), we arrive at
\begin{equation}\label{EPL3.4B} 
	\begin{aligned}	\phi\bigl(\bar X^\varepsilon(x,t)\bigr)\,=\,& 	\phi(x)-\int_0^\infty\int_0^t\bar{\mathcal{A}}^0\bar\PP^0_s(f-\uppi^0(f))\bigl(\bar X^\varepsilon(x,u)\bigr)\, \D u\,\D s
	\\&+\varepsilon \int_0^t\bigl(\bigl(\nabla\phi\bigr)^\mathrm{T}c\bigr)\bigl(\bar X^\varepsilon(x,u)\bigr)\,\D u
	\\&+\int_0^t\bigl(\bigl(\nabla\phi\bigr)^\mathrm{T}\upsigma\bigr)\bigl(\bar X^\varepsilon(x,u)\bigr)\,\D B^\varepsilon(u)\qquad \forall\,t\ge0\,.\end{aligned}
\end{equation} The last two integrals on the right-hand side  in \cref{EPL3.4B} are well defined, and follow from the last two terms in \cref{EPL3.4A}, because of \cref{ES3.8}. By observing that $\bar{\mathcal{A}}^0\bar\PP^0_sf(x)=\bar\PP^0_s\bar{\mathcal{A}}^0f(x)=\bar\PP^{0,\tau}_s\bar{\mathcal{A}}^{0,\tau}f(\Pi_\tau(x))$ (the last equality follows from \Cref{L3.3}), and $\uppi^0(\bar{\mathcal{A}}^{0,\tau}f)=0$, \cref{ES2.7} implies that the second term on the right-hand side in \cref{EPL3.4B} is well defined. It remains to prove that $$	
-\int_0^\infty\bar{\mathcal{A}}^0\bar\PP^0_s(f-\uppi^0(f))(x)\, \D s\,=\,(f-\uppi^0(f))(x)\qquad\forall\, x\in\R^\n\,.$$
We have 
\begin{align*}
\int_0^\infty\bar{\mathcal{A}}^0\bar\PP^0_t(f-\uppi^0(f))(x)\,\D t\,=\,\int_0^\infty \lim_{s\to0}\frac{\bar\PP^0_{s+t}(f-\uppi^0(f))(x)-\bar\PP^0_t(f-\uppi^0(f))(x)}{s}\,\D t\,.
\end{align*}  By employing It\^{o}'s formula, \Cref{L3.3} and \cref{ES2.7}, we have
\begin{align*}\frac{\|\bar\PP^0_{s+t}(f-\uppi^0(f))-\tilde\PP^0_t(f-\uppi^0(f))\|_\infty}{s}&\,\le\,
\frac{\int_t^{s+t}\|\bar\PP^0_{u}\bar{\mathcal{A}}^0(f-\uppi^0(f))\|_\infty\,\D u}{s}\\&\,\le\,\varGamma\,\|\bar{\mathcal{A}}^0(f-\uppi^0(f))\|_\infty\E^{-\gamma t}
\end{align*}
for all $ s,t\in(0,\infty)$.
The result now follows from the dominated convergence theorem.
\end{proof}

\medskip

We are now ready to prove the main result of this subsection.

	\medskip

		\begin{theorem} \label{T3.5}Under \ttup{A1}-\ttup{A4}, the relation in \cref{ES3.1} holds with $\mathsf{a}$ and $\mathsf{b}$ given in \cref{ES3.2}.
	\end{theorem}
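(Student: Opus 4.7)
The plan is to mimic the strategy of \Cref{T3.1} while replacing two ingredients that break down when $c\nequiv 0$: the distributional identity \cref{ES3.3} (no longer valid) and a direct application of It\^o's formula to $\beta$ (which is only $\mathcal{C}^1$ under \ttup{A4}). The substitute for It\^o's formula is \Cref{L3.4}. First I would apply that lemma coordinate-by-coordinate, with $f$ equal to the successive coordinates of $b$, so that the associated $\phi$ agrees with the corresponding coordinate of $\beta$. Assembling the components produces
\begin{align*}
\int_0^t\bigl(b-\uppi^0(b)\bigr)\bigl(\bar X^\varepsilon(x,s)\bigr)\,\D s
&\,=\,\beta\bigl(\bar X^\varepsilon(x,t)\bigr)-\beta(x)-\varepsilon\int_0^t(\DD\beta)\,c\bigl(\bar X^\varepsilon(x,s)\bigr)\,\D s\\
&\ \ \ \ -\int_0^t(\DD\beta)\,\upsigma\bigl(\bar X^\varepsilon(x,s)\bigr)\,\D B^\varepsilon(s)\,.
\end{align*}
Subtracting this identity from the SDE \cref{ES2.2} for $\bar X^\varepsilon$, rescaling via $X^\varepsilon(x,t)=\varepsilon\bar X^\varepsilon(x/\varepsilon,t/\varepsilon^2)$, and observing that $\beta(\bar X^\varepsilon(x/\varepsilon,t/\varepsilon^2))=\beta(X^\varepsilon(x,t)/\varepsilon)$, one arrives at
\begin{align*}
&X^\varepsilon(x,t)-x-\varepsilon^{-1}\uppi^0(b)\,t-\varepsilon\,\beta\bigl(X^\varepsilon(x,t)/\varepsilon\bigr)+\varepsilon\,\beta(x/\varepsilon)\\
&\qquad\,=\,\varepsilon^{2}\int_0^{t/\varepsilon^2}(\Id_\n-\DD\beta)\,c\bigl(\bar X^\varepsilon(x/\varepsilon,s)\bigr)\,\D s\\
&\qquad\ \ \ \ +\varepsilon\int_0^{t/\varepsilon^2}(\Id_\n-\DD\beta)\,\upsigma\bigl(\bar X^\varepsilon(x/\varepsilon,s)\bigr)\,\D B^\varepsilon(s)\,.
\end{align*}
Boundedness of $\beta$ (immediate from \cref{ES2.7}) makes the boundary correction $\varepsilon\beta(\cdot/\varepsilon)$ uniformly of order $\varepsilon$, so the distributional limit of $\{X^\varepsilon(x,t)-\varepsilon^{-1}\uppi^0(b)\,t\}_{t\ge0}$ is determined by $x$ plus the distributional limit of the right-hand side.

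The remaining task is to identify the scaling limits of the drift and the martingale terms. For any $\tau$-periodic $h\in\mathcal{B}_b(\R^\n,\R^\n)$, the uniform total-variation bound of \Cref{P2.4} (with constants $\gamma,\varGamma$ independent of $\varepsilon\in[0,\varepsilon_0]$) combined with \Cref{P2.5} delivers
$$\varepsilon^2\int_0^{t/\varepsilon^2}h\bigl(\bar X^\varepsilon(x/\varepsilon,s)\bigr)\,\D s\,\xrightarrow[\varepsilon\to0]{\Prob}\,t\,\uppi^0(h)\,;$$
specializing to $h=(\Id_\n-\DD\beta)\,c$ identifies the drift limit as $\mathsf{b}\,t$. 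The stochastic integral
$$M^\varepsilon_t\,\df\,\varepsilon\int_0^{t/\varepsilon^2}(\Id_\n-\DD\beta)\,\upsigma\bigl(\bar X^\varepsilon(x/\varepsilon,s)\bigr)\,\D B^\varepsilon(s)$$
is a continuous $\R^\n$-valued martingale whose predictable quadratic variation
$$\langle M^\varepsilon\rangle_t\,=\,\varepsilon^2\int_0^{t/\varepsilon^2}(\Id_\n-\DD\beta)\,a\,(\Id_\n-\DD\beta)^{\mathrm{T}}\bigl(\bar X^\varepsilon(x/\varepsilon,s)\bigr)\,\D s$$
converges in probability to $\mathsf{a}\,t$ by the same principle; \cite[Theorem VIII.2.17]{Jacod-Shiryaev-2003} then yields $\{M^\varepsilon_t\}_{t\ge0}\xRightarrow[\varepsilon\to0]{(\mathrm{d})}\{W^{\mathsf{a},0}(0,t)\}_{t\ge0}$. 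Combining with the (deterministic) drift limit via joint convergence produces the asserted convergence to $\{W^{\mathsf{a},\mathsf{b}}(x,t)\}_{t\ge0}$.

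The main technical obstacle is the $\varepsilon$-dependent ergodic averaging $\varepsilon^2\int_0^{t/\varepsilon^2}h(\bar X^\varepsilon(x/\varepsilon,s))\,\D s\to t\,\uppi^0(h)$. In \Cref{T3.1} the scaling identity \cref{ES3.3} collapsed this to Birkhoff's theorem for the fixed ergodic process $\process{\bar X^0}$, whereas here both the dynamics and the invariant measure move with $\varepsilon$. The remedy is to exploit the uniformity in $\varepsilon$ of the mixing rate in \Cref{P2.4}: this permits approximating the Ces\`aro averages of $\bar{\mathcal{P}}_s^{\varepsilon,\tau}h$ by $\uppi^\varepsilon(h)$ with a remainder that vanishes with $\varepsilon$, after which \Cref{P2.5} converts $\uppi^\varepsilon(h)$ into $\uppi^0(h)$. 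A minor ancillary point worth flagging is that \Cref{L3.4} requires its input $f$ to be $\mathcal{C}^2$, which is precisely why \ttup{A4} is invoked to upgrade $b$ to $\mathcal{C}^\infty$.
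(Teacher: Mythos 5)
Your proposal is correct and follows essentially the same route as the paper: Lemma~\ref{L3.4} (applied to the coordinates of $b$, so that $\phi$ recovers the coordinates of $\beta$) gives the semimartingale decomposition, boundedness of $\beta$ disposes of the boundary correction, and the ergodic identification of the bounded-variation and quadratic-variation parts is achieved by combining the $\varepsilon$-uniform exponential mixing from Proposition~\ref{P2.4} (giving an $L^2$ bound with $\uppi^\varepsilon$ in place of $\uppi^0$) with the weak convergence $\uppi^\varepsilon\Rightarrow\uppi^0$ of Proposition~\ref{P2.5}. The only cosmetic deviation is that you apply the martingale CLT (\cite[Theorem VIII.2.17]{Jacod-Shiryaev-2003}) to $M^\varepsilon$ and then adjoin the drift limit, whereas the paper first establishes tightness of the full semimartingale via \cite[Theorems VI.3.21 and VI.4.18]{Jacod-Shiryaev-2003} and then invokes \cite[Theorem VIII.2.4]{Jacod-Shiryaev-2003}; both routes are standard and equivalent, though if you go your way you should at least note that the drift process converges in probability \emph{uniformly on compacts} to its deterministic limit (by tightness of the bounded-variation part) before invoking joint convergence.
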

	\begin{proof}
		By combining  \Cref{L3.4} (applied to $b(x)$) with \cref{ES2.2} we have 
		\begin{align*}
			& \varepsilon\bar X^\varepsilon(x/\varepsilon,t/\varepsilon^{2})-\varepsilon^{-1}\uppi^0(b)t- x-\varepsilon\beta\bigl(\bar X^\varepsilon(x/\varepsilon,t/\varepsilon^{2})\bigr)+\varepsilon\beta(x/\varepsilon)\\&\,=\,\varepsilon^2\int_0^{\varepsilon^{-2}t}\bigl((\Id_\n-\DD\beta)\,c\bigr)\bigl(\bar X^\varepsilon(x/\varepsilon,s)\bigr)\,\D s\\
			&\ \ \ \ \ +\varepsilon\int_0^{\varepsilon^{-2}t}\bigl((\Id_\n-\DD\beta)\,\upsigma\bigr)\bigl(\bar X^\varepsilon(x/\varepsilon,s)\bigr)\,\D B^\varepsilon(s)\qquad \forall\, t\ge0\,.
		\end{align*}
		Recall that $X^\varepsilon(x,t)=\varepsilon\bar X^\varepsilon(x/\varepsilon,t/\varepsilon^{2}),$ $t\ge0$. Hence, due to boundedness of $\beta(x)$, $\{ X^{\varepsilon}( x,t)-\varepsilon^{-1} \uppi^0(b)t\}_{t\ge0}$ converges in law if, and only if,
		$\{\varepsilon\bar X^\varepsilon(x/\varepsilon,t/\varepsilon^{2})-\varepsilon^{-1}\uppi^0(b)t-$\linebreak $\varepsilon\beta(\bar X^\varepsilon(x/\varepsilon,t/\varepsilon^{2}))+\varepsilon\beta(x/\varepsilon)\}_{t\ge0}$    converges, and if this is the case the limit is the same.
		Clearly, $\{\varepsilon\bar X^\varepsilon(x/\varepsilon,t/\varepsilon^{2})-\varepsilon^{-1}\uppi^0(b)t-\varepsilon\beta(\bar X^\varepsilon(x/\varepsilon,t/\varepsilon^{2}))+\varepsilon\beta(x/\varepsilon)\}_{t\ge0}$ is a semimartingale  with bounded variation and predictable  quadratic covariation parts  $$\left\{\varepsilon^2\int_0^{\varepsilon^{-2}t}\bigl((\Id_\n-\DD\beta)\,c\bigr)\bigl(\bar X^\varepsilon(x/\varepsilon,s)\bigr)\,\D s\right\}_{t\ge0}\,,$$ and   $$\left\{\varepsilon^2\int_0^{\varepsilon^{-2}t}\bigl((\Id_\n-\DD\beta)\,a\,(\Id_\n-\DD\beta)^{\mathrm{T}}\bigr)\bigl(\bar X^\varepsilon(x/\varepsilon,s)\bigr)\,\D s\right\}_{t\ge0}\,,$$ respectively.  From  \cite[Theorem VI.3.21]{Jacod-Shiryaev-2003} 
		we see that both these processes are tight. Consequently, 
		\cite[Theorem VI.4.18]{Jacod-Shiryaev-2003} implies tightness of  $\{\varepsilon\bar X^\varepsilon(x/\varepsilon,t/\varepsilon^{2})-\varepsilon^{-1}\uppi^0(b)t$\linebreak $-\varepsilon\beta\bigl(\bar X^\varepsilon(x/\varepsilon,t/\varepsilon^{2})\bigr)+\varepsilon\beta(x/\varepsilon)\}_{t\ge0}$.
		To this end, it remains to prove finite-dimensional convergence in law of  $\{\varepsilon\bar X^\varepsilon(x/\varepsilon,t/\varepsilon^{2})-\varepsilon^{-1}\uppi^0(b)t-\varepsilon\beta(\bar X^\varepsilon(x/\varepsilon,t/\varepsilon^{2}))+\varepsilon\beta(x/\varepsilon)\}_{t\ge0}$  to $\{W^{\mathsf{a},0}(x,t)\}_{t\ge0}$.
		According to \cite[Theorem VIII.2.4]{Jacod-Shiryaev-2003} this will hold if $$\varepsilon^2\int_0^{\varepsilon^{-2}t}\bigl((\Id_\n-\DD\beta)\,c\bigr)\bigl(\bar X^\varepsilon(x/\varepsilon,s)\bigr)\,\D s\,\xrightarrow[\varepsilon\to0]{\Prob}\,\mathsf{b}\,t\,,$$ and $$ \varepsilon^2\int_0^{\varepsilon^{-2}t}\bigl((\Id_\n-\DD\beta)\,a\,(\Id_\n-\DD\beta)^{\mathrm{T}}\bigr)\bigl(\bar X^\varepsilon(x/\varepsilon,s)\bigr)\,\D s\,\xrightarrow[\varepsilon\to0]{\Prob}\,\mathsf{a}\,t
		$$ for all $t\ge0$.
		Due to $\tau$-periodicity, we have that
		\begin{align*}&\varepsilon^2\int_0^{\varepsilon^{-2}t}\bigl((\Id_\n-\DD\beta)\,c-\mathsf{b}\bigr)\bigl(\bar X^\varepsilon(x/\varepsilon,s)\bigr)\,\D s\\&\,=\,\varepsilon^2\int_0^{\varepsilon^{-2}t}\bigl((\Id_\n-\DD\beta)\,c-\mathsf{b}\bigr)\bigl(\bar X^{\varepsilon,\tau}(\Pi_\tau(x/\varepsilon),s)\bigr)\,\D s\qquad \forall\,t\ge0\,,\end{align*} and an analogous relation holds for the predictable quadratic covariation part.  We now have \begin{align*}
			&\varepsilon^4\mathbb{E}\Bigg[\left(\int_0^{\varepsilon^{-2}t}\bigl((\Id_\n-\DD\beta)\,c-\uppi^\varepsilon\bigl((\Id_\n-\DD\beta)\,c\bigr)\bigr)\bigl(\bar X^\varepsilon(x/\varepsilon,s)\bigr)\,\D s\right)^{\mathrm{T}}\\&\hspace{0.9cm}\left(\int_0^{\varepsilon^{-2}t}\bigl((\Id_\n-\DD\beta)\,c-\uppi^\varepsilon\bigl((\Id_\n-\DD\beta)\,c\bigr)\bigr)\bigl(\bar X^\varepsilon(x/\varepsilon,s)\bigr)\,\D s\right)\Bigg]\\&\,=\,\varepsilon^4\,\mathbb{E}\Bigg[\left(\int_0^{\varepsilon^{-2}t}\bigl((\Id_\n-\DD\beta)\,c-\uppi^\varepsilon\bigl((\Id_\n-\DD\beta)\,c\bigr)\bigr)\bigl(\bar X^{\varepsilon,\tau}(\Pi_\tau(x/\varepsilon),s)\bigr)\,\D s\right)^{\mathrm{T}}\\&\hspace{1.6cm}\left(\int_0^{\varepsilon^{-2}t}\bigl((\Id_\n-\DD\beta)\,c-\uppi^\varepsilon\bigl((\Id_\n-\DD\beta)\,c\bigr)\bigr)\bigl(\bar X^{\varepsilon,\tau}(\Pi_\tau(x/\varepsilon),s)\bigr)\,\D s\right)\Bigg]\\
			&\,=\,2\varepsilon^4 \int_0^{\varepsilon^{-2}t}\int_0^{s}\mathbb{E}\bigg[\Big(\bigl((\Id_\n-\nabla\beta)c-\uppi^\varepsilon\bigl((\Id_\n-\DD\beta)\,c\bigr)\bigr)\bigl(\bar X^{\varepsilon,\tau}(\Pi_\tau(x/\varepsilon),s)\bigr)\Big)^{\mathrm{T}}\\
			&\hspace{3.6cm}\Big(\bigl((\Id_\n-\DD\beta)\,c-\uppi^\varepsilon\bigl((\Id_\n-\DD\beta)\,c\bigr)\bigr)\bigl(\bar X^{\varepsilon,\tau}(\Pi_\tau(x/\varepsilon),u)\bigr)\Big)\bigg]\D u\,\D s\\
			&\,=\,2\varepsilon^4 \int_0^{\varepsilon^{-2}t}\int_0^{s}\mathbb{E}\bigg[\Big(\bar{\mathcal{P}}^{\varepsilon,\tau}_{s-u}\bigl((\Id_\n-\DD\beta)\,c-\uppi^\varepsilon\bigl((\Id_\n-\DD\beta)\,c\bigr)\bigr)\bigl(\bar X^{\varepsilon,\tau}(\Pi_\tau(x/\varepsilon),u)\bigr)\Big)^{\mathrm{T}}\\&\hspace{3.5cm}\Big(\bigl((\Id_\n-\DD\beta)\,c-\uppi^\varepsilon\bigl((\Id_\n-\DD\beta)\,c\bigr)\bigr)\bigl(\bar X^{\varepsilon,\tau}(\Pi_\tau(x/\varepsilon),u)\bigr)\Big)\bigg]\D u\,\D s\\
			&\,\le\,8\varepsilon^4\varGamma\,\|(\Id_\n-\DD\beta)\,c\|^2_\infty \int_0^{\varepsilon^{-2}t}\int_0^{s}\E^{-\gamma(s-u)}\D u\,\D s\\
			&\,=\,\frac{8\varepsilon^4\varGamma\,\|(\Id_\n-\DD\beta)\,c\|^2_\infty}{\gamma^2}\bigl(\varepsilon^{-2}t+\E^{-\gamma\varepsilon^{-2}t}-1\bigr)\,,
		\end{align*}
		where in the fourth step we employed  \cref{ES2.7}. 
		Thus,
		\begin{align*}&\varepsilon^2\Bigg(\mathbb{E}\Bigg[\left(\int_0^{\varepsilon^{-2}t}\bigl((\Id_\n-\DD\beta)\,c-\mathsf{b}\bigr)\bigl(\bar X^\varepsilon(x/\varepsilon,s)\bigr)\,\D s\right)^{\mathrm{T}}\\&\hspace{1.2cm}\left(\int_0^{\varepsilon^{-2}t}\bigl((\Id_\n-\DD\beta)\,c-\mathsf{b}\bigr)\bigl(\bar X^\varepsilon(x/\varepsilon,s)\bigr)\,\D s\right)\Bigg]\Bigg)^{1/2}\\
			&\,\le\,
			\varepsilon^2\Bigg(\mathbb{E}\Bigg[\left(\int_0^{\varepsilon^{-2}t}\bigl((\Id_\n-\DD\beta)\,c-\uppi^\varepsilon\bigl((\Id_\n-\DD\beta)\,c\bigr)\bigr)\bigl(\bar X^\varepsilon(x/\varepsilon,s)\bigr)\,\D s\right)^{\mathrm{T}}\\&\hspace{1.9cm}\left(\int_0^{\varepsilon^{-2}t}\bigl((\Id_\n-\DD\beta)\,c-\uppi^\varepsilon\bigl((\Id_\n-\DD\beta)\,c\bigr)\bigr)\bigl(\bar X^\varepsilon(x/\varepsilon,s)\bigr)\,\D s\right)\Bigg]\Bigg)^{1/2}\\
			&\ \ \ \ \ +|\uppi^{\varepsilon}\bigl((\Id_\n-\DD\beta)\,c\bigr)-\mathsf{b}|t\,.\end{align*}
		Analogous estimate holds for for the predictable quadratic covariation part. Finally, by letting $\varepsilon\to0$ the result follows from \Cref{P2.5}.
	\end{proof}

	\medskip
	
	Let us now give several remarks.
	
	\medskip

\begin{remark}\label{R1.3}\begin{itemize}
		\item [\ttup{i}]Observe  that when $b(x)\equiv b\in\R^\n$ then $\beta(x)\equiv0$ and, in this case, the conclusion of \Cref{T3.5} holds under \ttup{A1}-\ttup{A3} (that is, assumption \ttup{A4} is not necessary).
		
		\medskip

		\item[\ttup{ii}] By additionally assuming \ttup{A4} in \Cref{T3.1} we can derive a more explicit form of the covariance matrix $\mathsf{a}$. According to \Cref{L3.4} it holds that $$\beta\bigl(\bar X^0(x,t))\,=\, \beta(x)+\int_0^t(b-\uppi^0(b))\bigl(\bar X^0(x,s)\bigr)\, \D s+\int_0^t\bigl(\DD\beta\,\upsigma\bigr)\bigl(\bar X^0(x,s)\bigr)\,\D B(s)\qquad \forall\, t\ge0\,.$$ Now, from \cref{EPT3.1}, \Cref{P2.4}, Birkhoff ergodic theorem and \cite[Proposition 2.5]{Bhattacharya-1982} (analogously as in \cref{EPT3.1B}) it follows that $\uppi^0(\bar a)=-\uppi^0(\DD\beta\,a)$. Thus, $$\mathsf{a}\,=\,\uppi^0\left(a+\bigl(\DD\beta\bigr)\, a+a\,\bigl(\DD\beta\bigr)^\mathrm{T}-\beta\left(\bar{\mathcal{A}}^{0}\beta\right)^{\mathrm{T}}-\left(\bar{\mathcal{A}}^{0}\beta\right)\beta^{\mathrm{T}}\right)\,.$$
		
		Let us remark that an analogous representation of the covariance matrix $\mathsf{a}$ in the uniformly elliptic case has been derived in \cite{Bhattacharya-1985}. More precisely, 
		under the same assumptions as in \cite[Theorem 3.4.3]{Bensoussan-Lions-Papanicolaou-Book-1978} ($a(x)$ is uniformly elliptic and twice continuously differentiable,   and $b\in\mathcal{C}^2(\R^\n,\R)$) in 
		\cite[Theorem 3]{Bhattacharya-1985} 
		it has been shown that  
		$\uppi^0(\D x)$ admits a $\tau$-periodic continuously differentiable density function $\pi^0(x)$ (with respect to the Lebesgue measure on $\mathbb{T}_\tau^\n$), and  $\mathsf{a}$  has the following representation
		\begin{equation}\label{ER3.6}
		\begin{aligned}&\uppi^0\bigl(a-\beta\left(\bar{\mathcal{A}}^{0}\beta\right)^{\mathrm{T}}-\left(\bar{\mathcal{A}}^{0}\beta\right)\beta^{\mathrm{T}}\bigr)\\&+\left(\int_{\mathbb{T}_\tau^\n}\left(\beta_i(x)\sum_{k=1}^\n\partial_k\bigl(\pi^0(x)a_{kj}(x)\bigr)+\beta_j(x)\sum_{k=1}^\n\partial_k\bigl(\pi^0(x)a_{ki}(x)\bigr)\right)\,\D x\right)_{1\le i,j\le\n}\,.\end{aligned}
		\end{equation}
		Recall that in this situation $\beta\in\mathcal{C}^2(\R^\n,\R^\n)$. Also, a direct computation  shows that \cref{ER3.6} transforms to  \cref{ES3.2}, and \textit{vice versa}. 
	\end{itemize}
	
	\end{remark}

\medskip


\section{Homogenization of linear PDEs} \label{S4}

Denote by $\{\hat\PP_t^\varepsilon\}_{t\ge0}$ and $(\hat{\mathcal{A}}^\varepsilon,\mathcal{D}_{\hat{\mathcal{A}}^\varepsilon})$, and $\{\hat\PP_t^0\}_{t\ge0}$ and $(\hat{\mathcal{A}}^0,\mathcal{D}_{\hat{\mathcal{A}}^0})$ the operator semigroup and $\mathcal{B}_b$-infinitesimal generator of $\{ X^\varepsilon(x,t)-\varepsilon^{-1}\uppi^0(b)t\}_{t\ge0}$ and $\process{W^{\mathsf{a},\mathsf{b}}}$, respectively. Observe that for any $f\in\mathcal{C}_{u,b}^2(\R^\n,\R)$, 
$$\hat{\mathcal{A}}^\varepsilon f(x)\,=\,\mathcal{L}^{\varepsilon}f\left(x\right)-\varepsilon^{-1} \uppi^0(b)^{\mathrm{T}}\,\nabla f(x)\,,$$ and $$\hat{\mathcal{A}}^0f(x)\,=\,2^{-1}\mathrm{Tr}\bigl(\mathsf{a}\,\nabla\nabla^\mathrm{T}f(x)\bigr)+
\mathsf{b}^\mathrm{T}\,\nabla f(x)\,.$$
As a consequence of \cite[Theorem 1.1]{Kuhn-2018}, \cite[Theorem 17.25]{Kallenberg-Book-1997} and \Cref{T3.1,T3.5} we have the following.

\medskip

\begin{proposition}\label{P4.1} Assume \ttup{A1}-\ttup{A4} (or \ttup{A1}-\ttup{A3} if $c(x)\equiv0$ or $b(x)\equiv b\in\R^\n$). Then, for any $t_0\ge0$ and $f\in\mathcal{C}_\infty(\R^\n,\R)\cup\{f\in\mathcal{C}(\R^\n,\R)\colon f(x)\ \text{is\ $\tau$-periodic}\}$ it holds that 
	$$\lim_{\varepsilon \to 0}\sup_{0\le t\le t_0}\|\hat\PP_t^\varepsilon f-\hat\PP_t^0f\|_\infty\,=\,0\,,$$ and for any $f\in\mathcal{C}_c^2(\R^\n,\R)\cup\{f\in\mathcal{C}^2(\R^\n,\R)\colon f(x)\ \text{is\ $\tau$-periodic}\}$, $$\lim_{\varepsilon \to 0}\|\hat{\mathcal{A}}^\varepsilon f-\hat{\mathcal{A}}^0f\|_\infty\,=\,0\,.$$ 
\end{proposition}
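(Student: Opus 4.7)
The plan is to derive \Cref{P4.1} from the functional CLT of \Cref{T3.1,T3.5} by invoking two general convergence results: Kallenberg's continuous-mapping principle \cite[Thm.\ 17.25]{Kallenberg-Book-1997} and the semigroup/generator transfer theorem \cite[Thm.\ 1.1]{Kuhn-2018}. The first assertion (uniform convergence of semigroups on $[0,t_0]$) is the heart of the proposition; the second (convergence of generators) is then read off from the same Kuhn result as a companion statement.

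First I would extract from \Cref{T3.1,T3.5} the path-space weak convergence $\{X^\varepsilon(x,\cdot)-\varepsilon^{-1}\uppi^0(b)\,\cdot\}\xRightarrow[\varepsilon\to0]{(\mathrm{d})}\process{W^{\mathsf{a},\mathsf{b}}}$ on $\mathcal{C}([0,\infty),\R^\n)$, valid for each fixed $x\in\R^\n$. For fixed $t$ and $f\in\mathcal{C}_b(\R^\n,\R)$, the evaluation functional $\omega\mapsto f(\omega(t))$ is continuous and bounded on the path space, so \cite[Thm.\ 17.25]{Kallenberg-Book-1997} yields the pointwise-in-$x$ convergence $\hat{\mathcal{P}}_t^\varepsilon f(x)\to\hat{\mathcal{P}}_t^0 f(x)$. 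The next step is to upgrade this to the uniform control $\sup_{0\le t\le t_0}\|\hat{\mathcal{P}}_t^\varepsilon f-\hat{\mathcal{P}}_t^0 f\|_\infty\to0$ by appealing to \cite[Thm.\ 1.1]{Kuhn-2018}: the limit $\process{W^{\mathsf{a},\mathsf{b}}}$ is a Feller process whose semigroup is strongly continuous on $\mathcal{C}_\infty(\R^\n,\R)$, and this Feller structure is precisely the input Kuhn's theorem requires in order to pass from pointwise to uniform convergence and to deduce as a by-product the companion generator convergence on a core.

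For the $\tau$-periodic branch of the first assertion, I would instead reduce to the torus: a $\tau$-periodic $f\in\mathcal{C}(\R^\n,\R)$ descends to a function on the compact $\mathbb{T}_\tau^\n$, and the ergodic estimate of \Cref{P2.4} together with the invariant-measure convergence of \Cref{P2.5} supplies the uniformity in $x$ that the process-level CLT alone does not provide. The second assertion is then obtained from the generator part of \cite[Thm.\ 1.1]{Kuhn-2018}, giving the $\|\cdot\|_\infty$-convergence of $\hat{\mathcal{A}}^\varepsilon f$ to $\hat{\mathcal{A}}^0 f$ on the cores $\mathcal{C}_c^2(\R^\n,\R)$ and $\mathcal{C}^2(\mathbb{T}_\tau^\n,\R)$.

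The main obstacle I foresee is the uniform-in-$x$ step. The CLT of \Cref{T3.1,T3.5} only provides convergence of laws for each fixed starting point, so upgrading to sup-norm convergence is not automatic: in the $\mathcal{C}_\infty$ branch this is handled by the abstract Feller/tightness machinery of \cite[Thm.\ 1.1]{Kuhn-2018}, while in the $\tau$-periodic branch it is handled by the compactness of the torus together with the exponential mixing of \Cref{P2.4}. A further delicacy in the $c(x)\nequiv 0$ case is that the corrector $\beta$ is only $\mathcal{C}^1$, so the It\^o formula required for \Cref{T3.5} is \Cref{L3.4} rather than the classical one; this is what makes assumption \ttup{A4} unavoidable in that branch and feeds through to the setting of \Cref{P4.1}.
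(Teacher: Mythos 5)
Your proposal correctly isolates the same three ingredients the paper cites (\cite[Theorem 1.1]{Kuhn-2018}, \cite[Theorem 17.25]{Kallenberg-Book-1997}, and \Cref{T3.1,T3.5}), but it assigns them roles that are essentially reversed. Theorem 17.25 in Kallenberg is \emph{not} a continuous-mapping lemma yielding only pointwise-in-$x$ convergence: it is the approximation theorem for Feller processes, which asserts that weak convergence of Feller processes for every converging sequence of initial points is \emph{equivalent} both to $T^n_t f\to T_t f$ in sup norm uniformly for $t$ in compacts (for $f\in\mathcal{C}_\infty$) and to the graph convergence of generators on a core. So the uniform-in-$t$ and uniform-in-$x$ semigroup convergence and the generator convergence both come out of Kallenberg, not out of Kuhn. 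The role you assign to \cite[Theorem 1.1]{Kuhn-2018} --- upgrading pointwise to uniform --- is not what that theorem does; it is a result about a \emph{single} SDE (Feller property of the solution and identification of $\mathcal{C}_c^\infty$ as a core), and in this proof it serves to verify the hypothesis of Kallenberg's theorem, namely that each $\{X^\varepsilon(x,t)-\varepsilon^{-1}\uppi^0(b)t\}_{t\ge0}$ and the Gaussian limit are genuinely $\mathcal{C}_\infty$-Feller. Indeed the paper only ever establishes the weaker $\mathcal{C}_b$-Feller property for these processes, so without Kuhn's theorem one cannot even invoke Kallenberg's.

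Your treatment of the $\tau$-periodic branch also has a gap. You propose descending $\hat{\mathcal{P}}^\varepsilon_t f$ to $\mathbb{T}_\tau^\n$, but this is not available: because the drift-corrected process $\{X^\varepsilon(x,t)-\varepsilon^{-1}\uppi^0(b)t\}_{t\ge0}$ has coefficients depending on $x/\varepsilon$ and an additive time-dependent shift, the map $x\mapsto\hat{\mathcal{P}}^\varepsilon_t f(x)$ is $\varepsilon\tau$-periodic rather than $\tau$-periodic, and so it does not induce a function on the fixed torus $\mathbb{T}_\tau^\n$ (this is exactly the distinction that makes \Cref{P2.1}, which concerns the \emph{rescaled} process $\bar X^{\varepsilon,\tau}$, an easier statement than \Cref{P4.1}). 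You would instead need to exploit the identity $\hat{\mathcal{P}}^\varepsilon_t f(x)=\mathbb{E}[f(x+Z^\varepsilon_x(t))]$ where the law of $Z^\varepsilon_x(t):=X^\varepsilon(x,t)-x-\varepsilon^{-1}\uppi^0(b)t$ depends on $x$ only through $x/\varepsilon \bmod \ZZ_\tau^\n$, combined with uniform continuity of the $\tau$-periodic $f$ and the CLT for the compact set of starting points $[0,\tau]$. Finally, as a sanity check on the target statement itself: note that on $\mathcal{C}_c^2$ the formula $\hat{\mathcal{A}}^\varepsilon f=\mathcal{L}^\varepsilon f-\varepsilon^{-1}\uppi^0(b)^\mathrm{T}\nabla f$ contains the unbounded term $\varepsilon^{-1}(b(\cdot/\varepsilon)-\uppi^0(b))^\mathrm{T}\nabla f$, so the \emph{fixed}-test-function convergence $\|\hat{\mathcal{A}}^\varepsilon f-\hat{\mathcal{A}}^0 f\|_\infty\to0$ fails whenever $b$ is non-constant and $\nabla f\not\equiv0$; what Kallenberg's theorem actually delivers is graph convergence with approximating functions $f_\varepsilon\to f$, and any proof you write should be phrased in those terms.
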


\medskip

Let us now turn to the problem of homogenization of the problems in \cref{ES1.2,ES1.3}. 
In the sequel, we assume that $\uppi^0(b)=0$. This, in particular, implies that $\{\hat\PP_t^\varepsilon\}_{t\ge0}=\{\PP_t^\varepsilon\}_{t\ge0}$ and $\hat{\mathcal{A}}^\varepsilon =\mathcal{L}^{\varepsilon}$ for $\varepsilon>0$. In the case when 
$b(x)\equiv b\neq0$ (thus $\uppi^0(b)\neq0$)  one can easily construct examples satisfying \ttup{A3}. Recall that in this case \ttup{A4} is not required. On the other hand, when $b(x)$ vanishes \ttup{A3} in general does not have to hold.
A typical example  satisfying $\uppi^0(b)=0$ and \ttup{A1}-\ttup{A4} can be constructed  as follows. For $i=1,\dots,\n$ put $$b_i(x)\,\df\, 2^{-1}\sum_{j=1}^\n\partial_j a_{ij}(x)+\bar b_i(x)\,,\qquad x\in\R^\n\,,$$ where $\bar b_i(x)$ is $\tau$-periodic, of class $\mathcal{C}^\infty$, does not depend on $x_i$, and satisfies $\int_{[0,\tau]}\bar b_i(x)\,\D x=0$. It is then easy to see  that $\uppi^0(\D x)$ is the Lebegues measure on $\mathbb{T}_\tau^\n$ and $\uppi^0(b)=0$, and it is not hard to construct examples satisfying \ttup{A1}-\ttup{A4}.

For instance, let $\n=2$ and $\tau=(10,10)^{\mathrm{T}}$, and take $\tau$-periodic $\upsigma\in C^\infty(\R^2,\R)$ such that $a(x,y)=\upsigma(x,y)\upsigma(x,y)^{\rm T}$ is  positive definite on $\mathscr{B}_{3}(5,5)+\tau$ and $a(x,y) \equiv 0$ on $([0,10]^2\setminus \mathscr{B}_{3}(5,5))+\tau$. Here, $\mathscr{B}_r(x)$ stands for the open ball of radius $r>0$ around $x\in\R^\n.$ For example we can take
\begin{equation*}
	\upsigma(x,y)\,= \,\left(\mathbb{1}_{\mathscr{B}_{3}(5,5)}(x,y)\,\E^{\frac{-1}{9-(x-5)^2-(y-5)^2}}\right)\Id_2\,.
\end{equation*}
It remains to choose $\bar b_1, \bar b_2 \in C^\infty(\R^2,\R)$. Observe that this is enough to satisfy condition \ttup{A1}, condition \ttup{A2} with $\mathscr{O}=\mathscr{B}_{3}(5,5)$, and that in condition \ttup{A4} we have $\mathscr{U}=\mathscr{O}=\mathscr{B}_{3}(5,5)$. 
Notice also that for conditions \ttup{A3} and \ttup{A4} to be satisfied it is enough to take such $b(x,y)$ and $c(x,y)$ that there exists  $t_0>0$ such that $\Prob(\bar{\uptau}^{\varepsilon,(x,y)}_{\mathscr{B}_{3}(5,5)+\tau} < t_0)=1$ for all $(\varepsilon,(x,y))\in [0,\varepsilon_0]\times([0,10]^2\setminus \mathscr{B}_{3}(5,5))$ for some $\varepsilon_0>0$.
Take for instance $\bar b_1(x,y)=\tilde{b}(y)$ and $\bar b_2(x,y)=\tilde{b}(x)$ such that $\tilde{b}(x)$ is $\tau$-periodic and positive for $x \in [0,4]\cup [6,10]$ and on $[4,6]$ define it so that $\int_0^{10}\tilde{b}(x) \,\D x=0$. For example we can take 
\begin{equation*}
	\tilde{b}(x)\,=\, \begin{cases}
		1\,, & x \in [0,4]\cup [6,10]\,, \\
		1-\beta\, \E^{\frac{-1}{1-(x-5)^2}}\,, & x \in (4,6)\,,
	\end{cases} 
\end{equation*}
where $\beta>0$ is such that $\int_0^{10}\tilde{b}(x) \,\D x=0$.
Notice that with such definition of $b(x,y)$ we have that there exists $t_0> 0$ such that for all $(x,y) \in ([0,10]^2\setminus \mathscr{B}_{3}(5,5))+\tau$, $\Prob(\bar{\uptau}^{0,(x,y)}_{\mathscr{B}_{3}(5,5)+\tau} < t_0)=1$. Indeed suppose that we take $(x,y)$ from the central white area in  \Cref{fig:domena}, 
\begin{figure}[h!]
	\centering
	\includegraphics[width=0.42\linewidth]{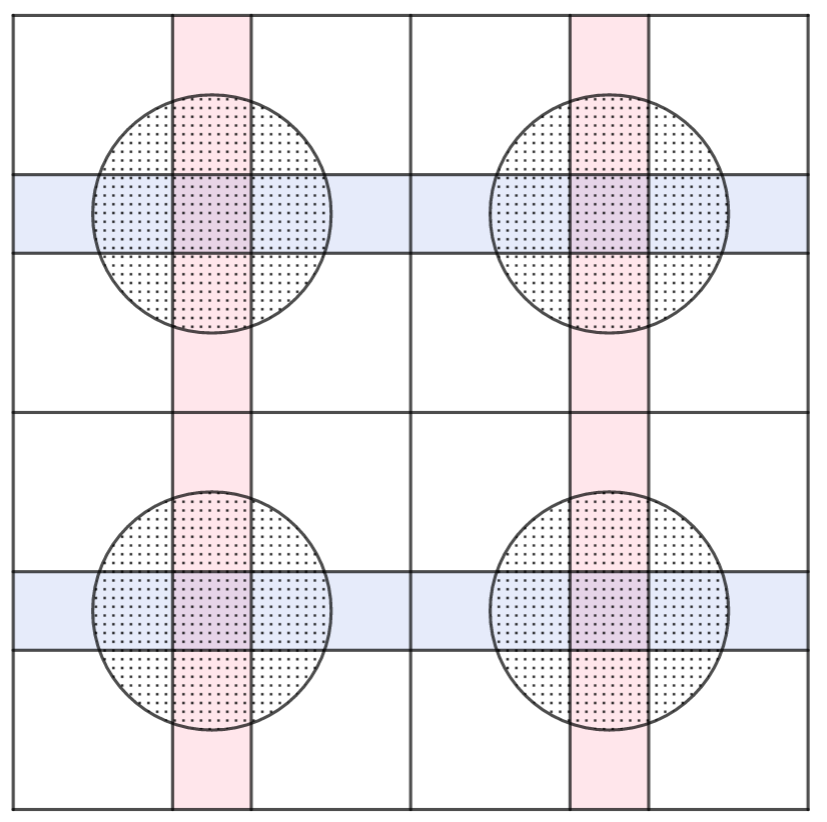}
	\caption{Visualization of different areas of domain for drift term $b(x,y)$.}
	\label{fig:domena}
\end{figure}
while we remain in white area we move at the constant speed diagonally up and to the right (see \Cref{fig:domena2}). 
\begin{figure}[h!]
	\centering
	\includegraphics[width=0.42\linewidth]{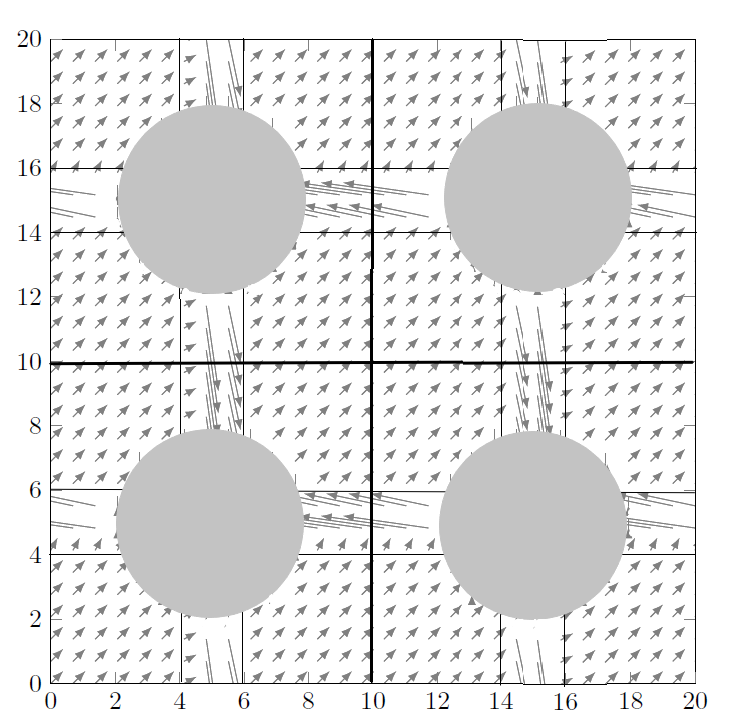}
	\caption{Visualization of the  vector field $b(x,y)$ outside of $\text{supp }\upsigma$.}
	\label{fig:domena2}
\end{figure}
We either hit the upper right circle,  right pink strip or upper blue strip. If we hit the circle, we are done. If we hit the right pink strip (if we hit the upper blue strip we reason in an analogous way),  we continue moving to the right but start to go down. Therefore we either hit the lower right circle of exit the pink strip to the right between two circles. But the later is not possible because $\int_0^{10}\tilde{b}(x) \,\D x=0$ and $\tau$-periodicity of $\tilde{b}(x)$ imply that if the process moves horizontally for $10$ it must vertically return to the same height (see  \Cref{fig:domena2}). Observe that in the previous example $c(x,y)\equiv0.$ However, one can easily see that the same assertion holds with (appropriately chosen) $c(x,y)\not\equiv0$ by choosing $\varepsilon_0$ small enough.


\subsection{The elliptic  problem in \cref{ES1.2}.} 
	Assume that $\mathscr{D}$ is an open bounded subset of $\R^\n$ satisfying the following:
	
	\medskip
	
\begin{itemize}
	\item [(i)] $\mathscr{D}=\left\{x: \mathscr{d}(x)<0 \right\}$ for some $\mathscr{d} \in \mathcal{C}_b^2(\R^\n,\R)$,
	
	\medskip
	
	\item [(ii)] $\left| \nabla \mathscr{d}(x)\right| \ge \delta >0 $ for all $x \in \partial\mathscr{D}$.
\end{itemize}

\medskip

\noindent Further,  suppose that $e\colon \mathscr{D}\to(-\infty,-\alpha]$, $\alpha>0$,  $f\colon \mathscr{D}\to\R$ and $g\colon\partial\mathscr{D}\to\R$ are  continuous, and assume that
$$\{x\in\partial\mathscr{D}\colon \Prob(\hat\uptau^{\varepsilon,x}>0)=0\}$$ is a (topologically) closed set for all $\varepsilon\in[0,\varepsilon_0]$, where $\hat\uptau^{\varepsilon,x}\df \inf\{t\ge0\colon X^\varepsilon(x,t)\notin  \overline{\mathscr{D}}\}$ (recall that $\uppi^0(b)=0$) and $\hat\uptau^{0,x}\df \inf\{t\ge0\colon W^{\mathsf{a},\mathsf{b}}(x,t)\notin  \overline{\mathscr{D}}\}$.
Then, according to \cite[Theorem 3.49]{Pardoux-Rascanu-Book-2014}, \begin{equation}\label{ES4.1}u^\varepsilon(x)\,\df\, \mathbb{E}\left[ g\bigl( X^\varepsilon(x,\hat\uptau^{\varepsilon,x})\bigr)\,\E^{\int_0^{\hat\uptau^{\varepsilon,x}}
	e\left(X^\varepsilon(x,s)/\varepsilon\right)\,\D s}+\int_0^{\hat\uptau^{\varepsilon,x}} f\bigl( X^\varepsilon(x,s)\bigr)\,\E^{\int_0^s
	e\left(X^\varepsilon(x,s)/\varepsilon\right)\,\D u}\,\D s\right]\end{equation}
is a unique continuous viscosity solution (see \cite[Section 6.5]{Pardoux-Rascanu-Book-2014} for the definition of viscosity solutions) to \cref{ES1.2}.

\begin{theorem}\label{T4.5} 
	In addition to the above assumptions, assume \ttup{A1}-\ttup{A4} (or \ttup{A1}-\ttup{A3} if $c(x)\equiv 0$ or $b(x)\equiv 0$), that $$\Prob\left(\left(\nabla\mathscr{d}\bigl(W^{\mathsf{a},\mathsf{b}}(x,\hat\uptau^{0,x})\bigr)\right)^{\mathrm{T}}\mathsf{a}\,\nabla\mathscr{d}\bigl(W^{\mathsf{a},\mathsf{b}}(x,\hat\uptau^{0,x})\bigr)>0\right)\,=\,1\qquad\forall\ x\in\mathscr{D}$$ and that $e(x)$ is $\tau$-periodic (here we consider $e(x)$ as a $\tau$-periodic, continuous and bounded function on $\R^\n$ such that $e(x)\le -\alpha$ for all $x\in \R^\n$). 
	Then, $$ \lim_{\varepsilon \to 0} u^\varepsilon(x)\,=\, u^0(x)\qquad \forall\,x\in \mathscr{D}\,,$$ where $$u^0(x)\,\df\,\mathbb{E}\left[g\bigl( W^{\mathsf{a},\mathsf{b}}(x,\hat\uptau^{0,x})\bigr)\,\E^{\uppi^0(e)\,\hat\uptau^{0,x}}+\int_0^{\hat\uptau^{0,x}}f\bigl( W^{\mathsf{a},\mathsf{b}}(x,s)\bigr)\,\E^{\uppi^0(e)\,s}\D s\right]$$ is a solution to 
	\begin{align*}
	\hat{\mathcal{A}}^0 u^0(x)+\uppi^0(e)\, u^0(x)+f(x)&\,=\, 0\,,\qquad x\in \mathscr{D}\,,\\
	u^0(x)&\,=\,g(x)\,,\qquad x\in \partial \mathscr{D}\,.
	\end{align*}
\end{theorem}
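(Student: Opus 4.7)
The plan is to pass to the limit directly in the Feynman--Kac representation \cref{ES4.1}. The key new ingredient beyond the CLT is the joint weak convergence of the trajectory $X^\varepsilon(x,\cdot)$ together with the rapidly oscillating additive functional $A^\varepsilon(t)\df\int_0^t e(X^\varepsilon(x,s)/\varepsilon)\,\D s$ (which does not itself have a pointwise limit, but whose time integral stabilizes). Once this is in hand, Skorohod representation plus continuity of the exit-time functional at limit paths, combined with dominated convergence, will yield the claim.

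For the joint convergence, the rescaling $X^\varepsilon(x,s)=\varepsilon\bar X^\varepsilon(x/\varepsilon,s/\varepsilon^2)$ gives
$$A^\varepsilon(t)\,=\,\varepsilon^2\int_0^{t/\varepsilon^2}\bigl(e-\uppi^\varepsilon(e)\bigr)\bigl(\bar X^\varepsilon(x/\varepsilon,u)\bigr)\,\D u\,+\,\uppi^\varepsilon(e)\,t\,.$$
Exactly the second-moment calculation used in the proof of \Cref{T3.5} (relying on the exponential ergodicity \cref{ES2.7} of \Cref{P2.4}, here applied to the centered $\tau$-periodic function $e-\uppi^\varepsilon(e)$) shows that the first summand tends to $0$ in $L^2(\Prob)$, uniformly on compact time intervals; by \Cref{P2.5}, $\uppi^\varepsilon(e)\to\uppi^0(e)$. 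Hence $A^\varepsilon(\cdot)$ converges to $\uppi^0(e)\,\cdot\,$ in probability, locally uniformly in $t$. Combined with the CLT (\Cref{T3.1} or \Cref{T3.5}, using $\uppi^0(b)=0$) and Slutsky's theorem, this yields
$$\bigl(X^\varepsilon(x,\cdot),\,A^\varepsilon(\cdot)\bigr)\,\xRightarrow[\varepsilon\to0]{({\rm d})}\,\bigl(W^{\mathsf{a},\mathsf{b}}(x,\cdot),\,\uppi^0(e)\,\cdot\bigr)$$
in $\mathcal{C}([0,\infty),\R^\n\times\R)$ with the locally uniform topology, and by Skorohod's representation theorem we may realize this convergence almost surely on a common probability space.

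Next, consider the first-exit functional $\Upsilon(\omega)\df\inf\{t\ge0\colon\omega(t)\notin\overline{\mathscr{D}}\}$. The smoothness of $\mathscr{d}$ and the bound $|\nabla\mathscr{d}|\ge\delta$ on $\partial\mathscr{D}$, the condition $(\nabla\mathscr{d})^{\mathrm{T}}\mathsf{a}\,\nabla\mathscr{d}>0$ at the exit point of the limit process, and the closedness of the set of irregular boundary points together ensure that $W^{\mathsf{a},\mathsf{b}}(x,\cdot)$ crosses $\partial\mathscr{D}$ transversally and does not linger there, so $\Upsilon$ is $\Prob$-a.s.\ continuous at those trajectories. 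Consequently $\hat\uptau^{\varepsilon,x}\to\hat\uptau^{0,x}$ and $X^\varepsilon(x,\hat\uptau^{\varepsilon,x})\to W^{\mathsf{a},\mathsf{b}}(x,\hat\uptau^{0,x})$ almost surely, and the locally uniform convergence of $A^\varepsilon$ yields $A^\varepsilon(\hat\uptau^{\varepsilon,x})\to\uppi^0(e)\hat\uptau^{0,x}$; by the same reasoning the $\int_0^\cdot f(X^\varepsilon(x,s))\,\D s$ term converges. Since $e\le-\alpha<0$, the exponential factors are bounded by $1$, and the elementary estimate $\mathbb{E}[\hat\uptau^{\varepsilon,x}]\le\alpha^{-1}(1+\|f\|_\infty^{-1})$-type bound (or a standard Lyapunov argument based on $\mathscr{d}$) provides the uniform integrability needed to take expectations; dominated convergence then gives $u^\varepsilon(x)\to u^0(x)$. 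The main obstacle is Step 1: because $e(X^\varepsilon/\varepsilon)$ has no limit as a process, one cannot apply the continuous mapping theorem directly and must obtain the joint convergence via a quantitative, uniform-in-$\varepsilon$ ergodic estimate; a secondary subtle point is the a.s.\ continuity of $\Upsilon$, for which the non-degeneracy of $\mathsf{a}$ in the normal direction is essential.
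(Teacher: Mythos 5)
Your proposal is correct and follows essentially the same route as the paper: establish joint weak convergence of $\bigl(X^\varepsilon,\int_0^\cdot e(X^\varepsilon/\varepsilon)\,\D s\bigr)$ by the ergodic $L^2$-estimate of \Cref{P2.4} and a Slutsky-type argument (the paper uses \cite[Theorem 3.9]{Billingsley-Book-1999}), then exploit a.s.\ continuity of the exit-time/Feynman--Kac functional at the limit trajectories (the paper imports this from the proof of \cite[Lemma 3.4.3]{Bensoussan-Lions-Papanicolaou-Book-1978}, which is exactly where the transversality hypothesis $(\nabla\mathscr{d})^{\mathrm{T}}\mathsf{a}\,\nabla\mathscr{d}>0$ a.s.\ at the exit point enters), and finish by dominated convergence. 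One small correction: your closing appeal to a bound on $\mathbb{E}[\hat\uptau^{\varepsilon,x}]$ is unnecessary and would in fact be unavailable in general in the degenerate setting, where $\hat\uptau^{\varepsilon,x}=\infty$ can have positive probability; the condition $e\le -\alpha<0$ already makes the Feynman--Kac functional deterministically bounded by $\|g\|_\infty+\|f\|_\infty/\alpha$, which is the bound the paper actually uses (and the paper's functional $\mathrm{F}$ is defined so as to handle the $\uptau(Y)=\infty$ branch explicitly).
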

\begin{proof}
	We follow the approach from \cite[Theorem 3.4.5]{Bensoussan-Lions-Papanicolaou-Book-1978}. Define 
	\begin{equation*}
	\zeta^\varepsilon(x,t)\,\df\, \int_{0}^{t} e\bigl(X^\varepsilon(x,s)/\varepsilon\bigr)\,\D s\,=\,\varepsilon^2 \int_0^{\varepsilon^{-2}t}e\bigl(\bar X^\varepsilon(x/\varepsilon,s)\bigr)\,\D s\,.
	\end{equation*}
 Analogously  as in the proof of \Cref{T3.5} we see that
	\begin{equation*}
	\zeta^\varepsilon(x,t) \,=\, \varepsilon^2 \int_0^{\varepsilon^{-2}t}e\left(\bar X^{\varepsilon,\tau} (\Pi_\tau(x/\varepsilon),s)\right)\,\D s \,\xrightarrow[\varepsilon\to0]{{\rm L}^2(\Prob)}\, \uppi^0\left(e\right)t\,,
	\end{equation*} where $\ \xrightarrow[]{{\rm L}^p(\Prob)}$ stands for the convergence in ${\rm L}^p(\Prob)$, $p\ge1$.
	Set $\zeta(x,t)\df \uppi^0\left(e\right) t$. This, together with the fact that the process $\process{\zeta^\varepsilon}$ is tight (due to \cite[Theorem VI.3.21]{Jacod-Shiryaev-2003}), implies that $\process{\zeta^\varepsilon} \xRightarrow[\varepsilon\to0]{({\rm d})} \process{\zeta}$. Since $\process{\zeta}$ is a constant in the space $\mathcal{C}([0,\infty),\R)$, using \Cref{T3.5} and \cite[Theorem 3.9]{Billingsley-Book-1999} we conclude
	\begin{equation}\label{conv.vector.eliptic}
	\process{(X^\varepsilon, \zeta^\varepsilon)}\, \xRightarrow[\varepsilon\to0]{({\rm d})}\, \process{(W^{\mathsf{a},\mathsf{b}}, \zeta)}\,.
	\end{equation}
		We next endow the space $ \mathcal{C}([0,\infty),\R^\n)\times \mathcal{C}([0,\infty),\R)$  with the Borel $\sigma$-algebra generated by the sets $\left\{ (W^{\mathsf{a},\mathsf{b}}, \xi)(x,\cdot) \in  \mathcal{C}([0,\infty),\R^\n)\times \mathcal{C}([0,\infty),\R) \colon (W^{\mathsf{a},\mathsf{b}}, \theta)(x,s) \in B \right\}$ where $s \in [0,\infty)$ and $ B \in \mathfrak{B}(\R^{\n+1})$. The processes $\process{(X^\varepsilon, \xi^\varepsilon)}$ and $\process{(W^{\mathsf{a},\mathsf{b}}, \xi)}$ introduce on\linebreak $\mathcal{C}([0,\infty),\R^\n)\times \mathcal{C}([0,\infty),\R)$  probability measures $\upmu_x^\varepsilon$ and $\upmu_x$, respectively. Observe that\linebreak $\upmu_x^\varepsilon\xRightarrow[\varepsilon\to0]{({\rm w})}\upmu_x$.
		We now define  $\mathrm{F}: \mathcal{C}([0,\infty),\R^\n)\times \mathcal{C}([0,\infty),\R) \to \R\cup\{\infty\}$ with
	\begin{equation*}
	\mathrm{F}(Y,\eta)\,=\,\begin{cases}
	g\bigl(Y\bigl(\uptau (Y)\bigr)\bigr)\,\E^{\eta(\uptau (Y))}+\int_{0}^{\uptau(Y)}f\bigl(Y(t)\bigr)\,\E^{\eta(t)}\,\D t\,, &\uptau(Y)<\infty\ \ \text{and}\ \ \eta(t)\le -\alpha t\ \ \forall t\ge 0\,,
	
	\smallskip
	
	\\
	
	\smallskip
	
	\int_{0}^{\infty}f\bigl(Y(t)\bigr)\,\E^{\eta(t)}\,\D t\,, & \uptau(Y)=\infty\ \ \text{and}\ \ \eta(t)\le -\alpha t\ \ \forall t\ge 0\,,\\

	\infty \,, & \text{otherwise}\,,
	\end{cases}
	\end{equation*}
	where $\uptau(Y)\df\inf \{t\ge 0\colon Y(t) \notin \overline{\mathscr{D}}\}$.  
	Clearly,
	\begin{equation*}
	u^\varepsilon(x)\,=\,\mathbb{E} \left[\mathrm{F}\left(\process{(X^\varepsilon, \zeta^\varepsilon)}\right)\right]\qquad \text{and}\qquad u^0(x)\,=\, \mathbb{E} \left[\mathrm{F}\left(\process{(W^{\mathsf{a},\mathsf{b}}, \zeta)}\right)\right]\,.
	\end{equation*}
The function $\mathrm{F}(Y,\eta)$ has the following properties:
 
 \medskip
 
	\begin{itemize}
		\item[(i)] it is measurable and bounded a.s. with respect to $\upmu_x^\varepsilon$ and $\upmu_x$;
		
		\medskip
		
		\item[(ii)] it is continuous a.s. with respect to $\upmu_x$.
	\end{itemize}

\medskip

\noindent This, together with \cref{conv.vector.eliptic}, gives 
\begin{equation*}
\lim\limits_{\varepsilon \to 0}u^\varepsilon(x)\,=\,u^0(x)\qquad \forall x\in\mathscr{D}\,,
\end{equation*}
which concludes the proof.

To this end, let us  verify (i) and (ii).	To see that  (i) holds, note that if $\eta(t)\le -\alpha t$ for all $t\ge 0$, then
	\begin{align*}
	\left|\mathrm{F}(Y,\eta)\right|&\,\le\, \left\| g \right\|_\infty+\left\| f \right\|_\infty \int_{0}^{\infty}\E^{-\alpha t}\,\D t\,=\,\left\| g \right\|_\infty+ \frac{\left\| f \right\|_\infty}{\alpha}<\infty\,.
	\end{align*}
	Due to the definition of processes $\process{\zeta^\varepsilon}$ and  $\process{\zeta}$, and the fact that for all $x \in \R^\n$ we have $e(x)\le -\alpha$, property (i) follows.
	
	To see property (ii), we need to check that  if $\{Y_n\}_{n\in\N}$ converges  to $Y$ uniformly on compact intervals, then 
\begin{equation}\label{eq}
		\lim_{n\to\infty}\mathrm{F}(Y_n,\eta)\,=\,\mathrm{F}(Y,\eta)
\end{equation} for $\eta(t)=\zeta(x,t)$. Recall that $\zeta(x,t)\le-\alpha t$ for all $x\in\R^\n$ and $t\ge0$.
	The relation in \cref{eq} will follow from the proof of \cite[Lemma 3.4.3]{Bensoussan-Lions-Papanicolaou-Book-1978} where it has been shown that if $\{Y_n\}_{n\in\N}$ converges  to $Y$ uniformly on compact intervals and $\Prob((\nabla\mathscr{d}(W^{\mathsf{a},\mathsf{b}}(x,\hat\uptau^{0,x})))^{\mathrm{T}}\mathsf{a}\,\nabla\mathscr{d}(W^{\mathsf{a},\mathsf{b}}(x,\hat\uptau^{0,x}))>0)=1$ for all $x\in\mathscr{D}$ (see the definition of the function $\beta(t)$ in  \cite[pp. 412]{Bensoussan-Lions-Papanicolaou-Book-1978}), then $\lim_{n \to \infty}\uptau(Y_n)=\uptau(Y)$. Namely, from this fact, and the dominated convergence theorem, we immediately see that  $$\lim_{n \to \infty}\int_{0}^{\uptau(Y_n)}f\bigl(Y_n(t)\bigr)\,\E^{\eta(t)}\D t \,=\, \int_{0}^{\uptau(Y)}f\bigl(Y(t)\bigr)\,\E^{\eta(t)}\D t\,.$$
	To this end, we need to prove that
\begin{equation*}\lim_{n\to\infty}g\bigl(Y_n\bigl(\uptau (Y_n)\bigr)\bigr)\,\E^{\eta(\uptau (Y_n))} \mathbb{1}_{\{\uptau(Y_n)<\infty\}}\,=\,\begin{cases}
	g\bigl(Y\bigl(\uptau (Y)\bigr)\bigr)\,\E^{\eta(\uptau (Y))}\,, &\uptau(Y)<\infty\,,
	
	\smallskip
	
	\\
	
	\smallskip
	
	0\,, &\uptau(Y)=\infty\,.\\

	\end{cases}
	\end{equation*}
 If $\uptau(Y)=\infty$, then $\lim_{n \to \infty}\uptau(Y_n)=\infty$ and since $g(x)$ is bounded and $\eta(t)\le -\alpha t$ for all $t\ge0$, the assertion follows. If $\uptau(Y)<\infty$, then there exist $T>0$ ad $n_Y\in\N$, such that $\uptau(Y_n) \in [0,T]$ for all $n\ge n_Y$. Therefore, 
	\begin{align*}
	&\left|g\bigl(Y_n\bigl(\uptau (Y_n)\bigr)\bigr)\E^{\eta(\uptau (Y_n))}-g\bigl(Y\bigl(\uptau (Y)\bigr)\bigl)\E^{\eta(\uptau (Y))}\right|\\&\,\le\,\|g\|_\infty\left|\E^{\eta(\uptau (Y_n))}-\E^{\eta(\uptau (Y))}\right| +\E^{\eta(\uptau (Y))}\left|g\bigl(Y_n\bigl(\uptau (Y_n)\bigr)\bigr)-g\bigl(Y\bigl(\uptau (Y_n)\bigr)\bigl)\right|\\&\ \ \ \ \ +\E^{\eta(\uptau (Y))}\left|g\bigl(Y\bigl(\uptau (Y_n)\bigr)\bigr)-g\bigl(Y\bigl(\uptau (Y)\bigr)\bigl)\right|\\
	&\,\le\, \|g\|_\infty\left|\E^{\eta(\uptau (Y_n))}-\E^{\eta(\uptau (Y))}\right| + \E^{\eta(\uptau (Y))}\sup_{0\le t \le T} \left|g\bigl(Y_n(t)\bigr)-g\bigl(Y(t)\bigr)\right|\\&\ \ \ \ \ +\E^{\eta(\uptau (Y))}\left|g\bigl(Y\bigl(\uptau (Y_n)\bigr)\bigr)-g\bigl(Y\bigl(\uptau (Y)\bigr)\bigl)\right|\,.
	\end{align*} Clearly, the first and last terms in the above inequality tend to zero as $n$ tends to infinity.
	Suppose that $\limsup_{n \to \infty}\sup_{0\le t \le T} |g(Y_n(t))-g(Y(t))|>0$. Then there exist $\epsilon>0$ and sequences $\{n_k\}_{k \in \N} \subseteq \N$ and $\{t_k\}_{k\in \N} \subseteq [0,T]$, such that $\lim_{k\to\infty}t_k = t \in [0,T]$ and $|g(Y_{n_k}(t_k))-g(Y(t_k))|>\epsilon$ for all $k\in\N$. However, this is not possible since $\lim_{k\to\infty}g(Y(t_k)) =g(Y(t))$, and 
	\begin{align*}
	\lim_{k\to\infty}\left|Y_{n_k}(t_k)-Y(t)\right|&\,\le\, \lim_{k\to\infty}\left|Y_{n_k}(t_k)-Y(t_k)\right|+\lim_{k\to\infty}\left|Y(t_k)-Y(t)\right|\\&\,\le\, \lim_{k\to\infty}\sup_{0\le s \le T}\left|Y_{n_k}(s)-Y(s)\right|+\lim_{k\to\infty}\left|Y(t_k)-Y(t)\right|\\
	&\,=\,0\,.
	\end{align*}
		From this we conclude that 
	\begin{equation*}
	\lim_{n \to \infty}\left|g\bigl(Y_n\bigl(\uptau (Y_n)\bigr)\bigr)\E^{\eta(\uptau (Y_n))}-g\bigl(Y\bigl(\uptau (Y)\bigr)\bigl)\E^{\eta(\uptau (Y))}\right|\,=\, 0\,,
	\end{equation*}
which proves the assertion.
\end{proof}


\subsection{The parabolic  problem in \cref{ES1.3}.} 
Let $d,e\in\mathcal{C}_b(\R^\n,\R)$, and let $f,g\in\mathcal{C}(\R^\n,\R)$ be such that \begin{equation}\label{ES4.2}|f(x)|+|g(x)|\,\le\, K\,(1+|x|^\kappa)\end{equation} for some $\kappa, K>0$ and all $x\in\R^\n$. Then, according to \cite[Theorem 3.43]{Pardoux-Rascanu-Book-2014} (see also \cite[Remark 2.5]{Pardoux-1998}), for any $\varepsilon>0$, 
\begin{equation}\begin{aligned}\label{ES4.3}u^\varepsilon(x,t)\,\df\, \mathbb{E}\bigg[ &g\bigl( X^\varepsilon(x,t)\bigr)\,\E^{\int_0^t\left(\varepsilon^{-1}d\left( X^\varepsilon(x,s)/\varepsilon\right)+e\left(X^\varepsilon(x,s)/\varepsilon\right)\right)\,\D s}\\
&+\int_0^t f\bigl( X^\varepsilon(x,s)\bigr)\,\E^{\int_0^s\left(\varepsilon^{-1}d\left( X^\varepsilon(x,u)/\varepsilon\right)+e\left(X^\varepsilon(x,u)/\varepsilon\right)\right)\,\D u}\,\D s\bigg]\end{aligned}\end{equation}
is a viscosity solution  to 
\cref{ES1.2}. Assume further that $d(x)$ is $\tau$-periodic,  continuously differentiable and such that $\uppi^0(d)=0$ (otherwise we can just replace $d(x)$ by $d(x)-\uppi^0(d)$ in \cref{ES1.2,ES4.3}).
Then, \ttup{A1}-\ttup{A4} imply that $$\delta(x)\,\df\,-\int_0^\infty\bar\PP_t^0d(x)\,\D t\,,\qquad x\in\R^\n\,,$$ is well defined, $\tau$-periodic, continuously differentiable, and satisfies $\delta\in\mathcal{D}_{\bar{\mathcal{A}}^0}$ and $\bar{\mathcal{A}}^0\delta(x)=d(x)$.

\medskip

\begin{theorem}\label{T4.4} In addition to the above assumptions, assume \ttup{A1}-\ttup{A4} (or \ttup{A1}-\ttup{A3} if $c(x)\equiv0$ or $b(x)\equiv 0$, and $d(x)\equiv 0$), $d\in\mathcal{C}^2(\R^\n,\R)$ and that $e(x)$ is $\tau$-periodic. Then, $$ \lim_{\varepsilon \to 0} u^\varepsilon(t,x)\,=\, u^0(t,x)\qquad \forall\,(t,x)\in[0,\infty)\times\R^\n\,,$$ where $$u^0(t,x)\,\df\,\mathbb{E}\bigg[g(\bar W(x,t))\,\E^{\uppi^0\left(2^{-1}\left(\nabla\delta\right)^{\mathrm{T}}a\,\nabla\delta+e-\left(\nabla\delta\right)^{\mathrm{T}}c\right)\,t}+\int_0^tf(\bar W(x,s))\,\E^{\uppi^0\left(2^{-1}\left(\nabla\delta\right)^{\mathrm{T}}a\,\nabla\delta+e-\left(\nabla\delta\right)^{\mathrm{T}}c\right)\,s}\D s\bigg]$$ is a solution to 
	\begin{align*}
	\partial_t u^0(x,t)&\,=\,\mathcal{L}^0 u(x,t)+\uppi^0\bigl(2^{-1}\left(\nabla\delta\right)^{\mathrm{T}}a\,\nabla\delta+e-\left(\nabla\delta\right)^{\mathrm{T}}c\bigr) u^0(x,t)+f(x)\\
	u^0(x,0)&\,=\,g(x)\,,\qquad x\in\R^\n\,,
	\end{align*}
	and $\process{\bar W}$ is a $\n$-dimensional Brownian motion with $\mathcal{B}_b$-infinitesimal generator $\mathcal{L}^0$, determined by drift vector $\bar{\mathsf{b}}\df \mathsf{b}-\uppi^0((\Id_\n-\mathrm{D}\beta)\,a\,\nabla\delta)$ and covariance matrix $\mathsf{a}$.
		\end{theorem}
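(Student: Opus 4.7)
The plan is to follow the strategy of \Cref{T4.5}, with the added difficulty that the Feynman--Kac exponent in \cref{ES4.3} contains the singular term $\varepsilon^{-1}\int_0^t d(X^\varepsilon(x,s)/\varepsilon)\,\D s$. The idea is to absorb it through the corrector $\delta$. Setting
\begin{equation*}
\zeta^\varepsilon(x,t)\,\df\,\int_0^t\bigl(\varepsilon^{-1}d(X^\varepsilon(x,s)/\varepsilon)+e(X^\varepsilon(x,s)/\varepsilon)\bigr)\,\D s\,,
\end{equation*}
the scaling $X^\varepsilon(x,t)=\varepsilon\bar X^\varepsilon(x/\varepsilon,t/\varepsilon^2)$ rewrites the singular integral as $\varepsilon\int_0^{t/\varepsilon^2}d(\bar X^\varepsilon(x/\varepsilon,u))\,\D u$, and \Cref{L3.4} applied to $\delta$ decomposes this into a boundary term $\varepsilon\bigl(\delta(\bar X^\varepsilon(x/\varepsilon,t/\varepsilon^2))-\delta(x/\varepsilon)\bigr)$ of order $\varepsilon$, a slow drift $-\varepsilon^2\int_0^{t/\varepsilon^2}(\nabla\delta)^\mathrm{T}c\,(\bar X^\varepsilon(x/\varepsilon,u))\,\D u$ which converges by \Cref{P2.4} and Birkhoff's theorem to $-t\,\uppi^0((\nabla\delta)^\mathrm{T}c)$, and a rescaled stochastic integral
\begin{equation*}
N^\varepsilon(t)\,\df\,-\varepsilon\int_0^{t/\varepsilon^2}((\nabla\delta)^\mathrm{T}\upsigma)(\bar X^\varepsilon(x/\varepsilon,u))\,\D B^\varepsilon(u)
\end{equation*}
that remains of order one.

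The next step is to establish joint weak convergence in $\mathcal{C}([0,\infty),\R^\n\times\R)$ of the pair $(X^\varepsilon(x,\cdot),\zeta^\varepsilon(x,\cdot))$. Combining the semimartingale expansion of $X^\varepsilon$ via the corrector $\beta$ used in the proof of \Cref{T3.5} with the above decomposition of $\zeta^\varepsilon$, one obtains an $(\n+1)$-dimensional semimartingale whose bounded-variation part converges in $\mathrm{L}^2(\Prob)$ to $(\mathsf{b}\,t,\vartheta\,t)$, with $\vartheta\df\uppi^0(e)-\uppi^0((\nabla\delta)^\mathrm{T}c)$, and whose martingale part has predictable quadratic covariation converging to $t\mathsf{M}$ with
\begin{equation*}
\mathsf{M}\,\df\,\uppi^0\begin{pmatrix}(\Id_\n-\DD\beta)\,a\,(\Id_\n-\DD\beta)^{\mathrm{T}} & -(\Id_\n-\DD\beta)\,a\,\nabla\delta\\ -(\nabla\delta)^\mathrm{T}a\,(\Id_\n-\DD\beta)^{\mathrm{T}} & (\nabla\delta)^\mathrm{T}a\,\nabla\delta\end{pmatrix}\,.
\end{equation*}
Tightness of both parts via \cite[Theorems VI.3.21 and VI.4.18]{Jacod-Shiryaev-2003} together with the martingale functional CLT \cite[Theorem VIII.2.4]{Jacod-Shiryaev-2003} will then yield $(X^\varepsilon(x,\cdot),\zeta^\varepsilon(x,\cdot))\xRightarrow[\varepsilon\to0]{(\mathrm{d})}(W^{\mathsf{a},\mathsf{b}}(x,\cdot),N(\cdot)+\vartheta\,\cdot)$, in which $(W^{\mathsf{a},\mathsf{b}}(x,\cdot)-x-\mathsf{b}\,\cdot,N(\cdot))$ is the centered Gaussian martingale with covariance $t\mathsf{M}$.

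With this joint limit in hand, the passage to the limit in \cref{ES4.3} proceeds exactly as in \Cref{T4.5}: the functional $\mathrm{F}(Y,\eta)\df g(Y(t))\E^{\eta(t)}+\int_0^t f(Y(s))\E^{\eta(s)}\,\D s$ is continuous almost surely with respect to the limiting law, and uniform integrability of $\{\mathrm{F}(X^\varepsilon(x,\cdot),\zeta^\varepsilon(x,\cdot))\}_{\varepsilon>0}$ is provided by boundedness of $e$, the polynomial bound \cref{ES4.2}, and Gaussian tail estimates. For the final simplification of the limiting expectation one invokes, for each fixed time $s$, the Cameron--Martin identity
\begin{equation*}
\mathbb{E}\bigl[G(W^{\mathsf{a},\mathsf{b}}(x,s))\,\E^{N(s)}\bigr]\,=\,\E^{s\uppi^0((\nabla\delta)^\mathrm{T}a\,\nabla\delta)/2}\,\mathbb{E}\bigl[G\bigl(W^{\mathsf{a},\mathsf{b}}(x,s)-s\,\uppi^0((\Id_\n-\DD\beta)\,a\,\nabla\delta)\bigr)\bigr]
\end{equation*}
for the jointly Gaussian vector with covariance $s\mathsf{M}$ and $\mathbb{E}[N(s)]=0$. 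The shifted Brownian motion $W^{\mathsf{a},\mathsf{b}}(x,\cdot)-\cdot\,\uppi^0((\Id_\n-\DD\beta)\,a\,\nabla\delta)$ is precisely $\bar W(x,\cdot)$, and adding $\vartheta s$ from the deterministic part to $s\uppi^0((\nabla\delta)^\mathrm{T}a\,\nabla\delta)/2$ recovers exactly the rate $\uppi^0(2^{-1}(\nabla\delta)^\mathrm{T}a\,\nabla\delta+e-(\nabla\delta)^\mathrm{T}c)$, giving the claimed $u^0(x,t)$. That $u^0$ then solves the indicated Cauchy problem for $\mathcal{L}^0$ is a routine Feynman--Kac verification. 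The main obstacle is the joint CLT step: both the cross-covariation entries of $\mathsf{M}$ and the drift correction $-\uppi^0((\Id_\n-\DD\beta)\,a\,\nabla\delta)$ implicit in $\bar{\mathsf{b}}$ originate from corrector terms that look $\varepsilon$-small but get multiplied by integrals of length $\varepsilon^{-2}t$, so the $\mathrm{L}^2$-bookkeeping must parallel that of \Cref{T3.5}, now carried out simultaneously for the two correctors $\beta$ and $\delta$.
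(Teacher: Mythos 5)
Your proposal is essentially correct but takes a genuinely different route to the drift shift $\bar{\mathsf b}$. The paper works at finite $\varepsilon$: after isolating the Dol\'eans exponential built from $-\varepsilon\int_0^{\cdot}(\nabla\delta)^{\mathrm T}\upsigma\,\D B^\varepsilon$, it invokes martingale convergence to define a tilted measure $\Prob^\varepsilon(\D\omega)=Y(x,\varepsilon)(\omega)\,\Prob(\D\omega)$, applies Girsanov to obtain a $\Prob^\varepsilon$-Brownian motion $\bar B^\varepsilon$, and then runs the functional CLT of \Cref{T3.5} all over again for $\varepsilon\bar X^\varepsilon$ \emph{under} $\Prob^\varepsilon$, where the corrector decomposition now carries the extra drift $-\varepsilon(\Id_\n-\DD\beta)\,a\,\nabla\delta$ that produces $\bar{\mathsf b}$. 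You instead pass to the limit first: you establish a joint functional CLT for the $(\n+1)$-dimensional semimartingale $(X^\varepsilon,\zeta^\varepsilon)$, identify the limiting Gaussian covariance $\mathsf M$ (whose off-diagonal block $-\uppi^0((\Id_\n-\DD\beta)\,a\,\nabla\delta)$ is exactly the cross predictable covariation of the two martingale parts), and then read off the drift shift from the elementary Cameron--Martin identity $\mathbb{E}[G(Z)\,\E^{N}]=\E^{\Var(N)/2}\,\mathbb{E}[G(Z+\mathrm{Cov}(Z,N))]$ applied marginally at each fixed time. Both arguments are sound; yours is conceptually cleaner (the change of measure is done once, on the Gaussian limit, where it is trivial), while the paper's avoids having to verify the \emph{joint} tightness and finite-dimensional convergence of the pair, at the cost of redoing the $L^p(\Prob^\varepsilon)$-estimates and the truncation/Fatou argument for the unbounded $g$ under the $\varepsilon$-dependent tilted measure. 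One place where your sketch is more casual than the paper's is the uniform integrability step: the exponent $\zeta^\varepsilon$ contains the stochastic integral $-\varepsilon\int_0^{t/\varepsilon^2}(\nabla\delta)^{\mathrm T}\upsigma\,\D B^\varepsilon$, and what makes its exponential moments bounded uniformly in $\varepsilon$ is that its quadratic variation is $\le t\,\|(\nabla\delta)^{\mathrm T}a\,\nabla\delta\|_\infty$ (not ``Gaussian tail estimates'' at the limit); combined with the polynomial moment bound of type \cref{EPT4.4C} for $|X^\varepsilon(x,t)|^{2\kappa}$ this gives the needed $L^{1+\eta}$-bound, and that piece of bookkeeping should be written out explicitly rather than invoked.
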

\begin{proof}  
We first show that $$\lim_{\varepsilon \to 0}\mathbb{E}\left[ g\bigl( X^\varepsilon(x,t)\bigr)\,\E^{\int_0^t\left(\varepsilon^{-1}d\left( X^\varepsilon(x,s)/\varepsilon\right)+e\left(X^\varepsilon(x,s)/\varepsilon\right)\right)\,\D s}\right]\,=\,\mathbb{E}\left[g(\bar W(x,t))\right]\,\E^{\uppi^0\left(2^{-1}\left(\nabla\delta\right)^{\mathrm{T}}a\,\nabla\delta+e-\left(\nabla\delta\right)^{\mathrm{T}}c\right)\,t}\,.$$
	From \Cref{L3.4} we have that 
		\begin{align*}
	\delta\bigl(\bar X^\varepsilon(x,t)\bigr)\,=\,& \delta(x)+\int_0^td\bigl(\bar X^\varepsilon(x,s)\bigr)\, \D s
+\varepsilon \int_0^t\bigl(\left(\nabla\delta\right)^{\mathrm{T}}c\bigr)\bigl(\bar X^\varepsilon(x,s)\bigr)\,\D s\\&+\int_0^t\bigl((\nabla\delta)^{\mathrm{T}}\upsigma\bigr)\bigl(\bar X^\varepsilon(x,s)\bigr)\,\D B^\varepsilon(s)\qquad \forall\,t\ge0\,.
	\end{align*}
	From this we have that \begin{align*}
	&\varepsilon^{-1}\int_0^td\left(\varepsilon^{-1}X^\varepsilon(x,s)\right)\,\D s\\
	&\,=\, \varepsilon\int_0^{\varepsilon^{-2}t}d\left(\bar X^\varepsilon(x/\varepsilon,s)\right)\,\D s\\
	&\,=\,	
	\varepsilon\delta\bigl(\bar X^\varepsilon(x/\varepsilon,t/\varepsilon^2)\bigr)-\varepsilon\delta(x/\varepsilon) - \varepsilon^2 \int_0^{\varepsilon^{-2}t}\bigl(\left(\nabla\delta\right)^{\mathrm{T}}c\bigr)\bigl(\bar X^\varepsilon(x/\varepsilon,s)\bigr)\,\D s\\ & \ \ \ \
	-\varepsilon\int_0^{\varepsilon^{-2}t}\bigl((\nabla\delta)^{\mathrm{T}}\upsigma\bigr)\bigl(\bar X^\varepsilon(x/\varepsilon,s)\bigr)\,\D B^\varepsilon(s)\,.
	\end{align*}
		By assumption,
	\begin{align}\label{EPT4.4A}
	\mathbb{E}\left[\left|g\bigl( \varepsilon\bar X^\varepsilon(x/\varepsilon^2,t/\varepsilon^2)\bigr)\right|^2\right]\,\le\,2K\left(1+\varepsilon^{2\kappa}\mathbb{E}\left[\left|\bar X^\varepsilon(x/\varepsilon,t/\varepsilon^2)\bigr)\right|^{2\kappa}\right]\right)\,.
	\end{align}
	Without loss of generality we may assume that $\kappa\in\N$. By combining \cref{ES2.2} and \Cref{L3.4} we have
	\begin{equation}\label{EPT4.4B}
	\begin{aligned}
	\varepsilon\bar X^\varepsilon(x/\varepsilon,t/\varepsilon^2)\,=\,&x+\varepsilon\beta\bigl(\bar X^\varepsilon(x/\varepsilon,t/\varepsilon^2)\bigr)-\varepsilon\beta(x/\varepsilon)\\&+\varepsilon^2\int_0^{\varepsilon^{-2}t}(c-\mathrm{D}\beta\, c)\bigl(\bar X^\varepsilon(x/\varepsilon,s)\bigr)\,\D s\\ &+\varepsilon\int_0^{\varepsilon^{-2}t}(\upsigma-\mathrm{D}\beta\,\upsigma)\bigl(\bar X^\varepsilon(x/\varepsilon,s)\bigr)\,\D B^{\varepsilon}(s)\,,\qquad t\ge0\,.
	\end{aligned}	
	\end{equation}
	Thus,
	\begin{align*}
	|\varepsilon\bar X^\varepsilon(x/\varepsilon,t/\varepsilon^2)|^{2\kappa}\,\le\,&\bar K\bigg(|x|^{2\kappa}+\varepsilon^{2\kappa}+t^{2\kappa}+\varepsilon^{2\kappa}\left|\int_0^{\varepsilon^{-2}t}(\upsigma-\mathrm{D}\beta\,\upsigma)\bigl(\bar X^\varepsilon(x/\varepsilon,s)\bigr)\,\D B^{\varepsilon}(s)\right|^{2\kappa}\bigg)\,,
	\end{align*}	
	for some $\bar K>0$ which does not depend on $\varepsilon$. By employing  It\^{o}'s formula and Doob's inequality, we conclude \begin{equation}\label{EPT4.4C}\varepsilon^{2\kappa}\mathbb{E}\left[|\bar X^\varepsilon(x/\varepsilon,t/\varepsilon^2)|^{2\kappa}\right]\,\le\,\tilde K\left(|x|^{2\kappa}+\varepsilon^{2\kappa}+t^{\kappa}\right)\,,\end{equation}
	for some $\tilde K>0$ which does not depend on $\varepsilon$. Consequently,
	\begin{align*}
	&\left|u^{\varepsilon}(x,t)-\mathbb{E}\left[g\bigl( \varepsilon\bar X^\varepsilon(x/\varepsilon,t/\varepsilon^2)\bigr)\,\E^{- \varepsilon^2 \int_0^{\varepsilon^{-2}t}\left(\left(\nabla\delta\right)^{\mathrm{T}} c-e\right)\left(\bar X^\varepsilon(x/\varepsilon,s)\right)\,\D s-\varepsilon\int_0^{\varepsilon^{-2}t}\left((\nabla\delta)^{\mathrm{T}}\upsigma\right)\left(\bar X^\varepsilon(x/\varepsilon,s)\right)\,\D B^\varepsilon(s)}\right]\right|\\
	&\,\le\,	\mathbb{E}\left[\left|g\bigl( \varepsilon\bar X^\varepsilon(x/\varepsilon,t/\varepsilon^2)\bigr)\right|^2\right]^{1/2}\mathbb{E}\left[\left|\E^{\varepsilon\delta\left(\bar X^\varepsilon(x/\varepsilon,t/\varepsilon^2)\right)-\varepsilon\delta(x/\varepsilon)}-1\right|^4\right]^{1/4}\\
	&\hspace{0.65cm}\mathbb{E}\left[\E^{-8 \varepsilon^2 \int_0^{\varepsilon^{-2}t}\left((\nabla\delta)^{\mathrm{T}}a\,\nabla\delta\right)\left(\bar X^\varepsilon(x/\varepsilon,s)\right)\,\D s-4\varepsilon\int_0^{\varepsilon^{-2}t}\left((\nabla\delta)^{\mathrm{T}}\upsigma\right)\left(\bar X^\varepsilon(x/\varepsilon,s)\right)\,\D B^\varepsilon(s)}\right]^{1/4}\E^{t\|2(\nabla\delta)^{\mathrm{T}}a\,\nabla\delta-(\nabla\delta)^{\mathrm{T}}c+e\|_\infty}\\
	&\,\le\,\mathbb{E}\left[\left|g\bigl( \varepsilon\bar X^\varepsilon(x/\varepsilon,\varepsilon^{-2}t)\bigr)\right|^2\right]^{1/2}\mathbb{E}\left[\left|\E^{\varepsilon\delta\left(\bar X^\varepsilon(x/\varepsilon,\varepsilon^{-2}t)\right)-\varepsilon\delta(x/\varepsilon)}-1\right|^4\right]^{1/4}\E^{t\|2(\nabla\delta)^{\mathrm{T}}a\,\nabla\delta-(\nabla\delta)^{\mathrm{T}}c+e\|_\infty}\,.
	\end{align*}
	Thus,
$u^\varepsilon(x,t)$ converges as $\varepsilon\to0$ if, and only if,
	$$\mathbb{E}\left[g\bigl( \varepsilon\bar X^\varepsilon(x/\varepsilon,t/\varepsilon^2)\bigr)\,\E^{- \varepsilon^2 \int_0^{\varepsilon^{-2}t}\left((\nabla\delta)^{\mathrm{T}}c-e\right)\left(\bar X^\varepsilon(x/\varepsilon,s)\right)\,\D s-\varepsilon\int_0^{\varepsilon^{-2}t}\left((\nabla\delta)^{\mathrm{T}}\upsigma\right)\left(\bar X^\varepsilon(x/\varepsilon,s)\right)\,\D B^\varepsilon(s)}\right]$$ converges, and if this is the case the limit is the same. 
	Next,
	\begin{equation}
	\begin{aligned}\label{EPT4.4D}
	\bigg|&\mathbb{E}\left[g\bigl( \varepsilon\bar X^\varepsilon(x/\varepsilon,t/\varepsilon^2)\bigr)\,\E^{- \varepsilon^2 \int_0^{\varepsilon^{-2}t}\left((\nabla\delta)^{\mathrm{T}} c-e\right)\left(\bar X^\varepsilon(x/\varepsilon,s)\right)\,\D s-\varepsilon\int_0^{\varepsilon^{-2}t}\left((\nabla\delta)^{\mathrm{T}}\upsigma\right)\left(\bar X^\varepsilon(x/\varepsilon,s)\right)\,\D B^\varepsilon(s)}\right]\\
	&-\E^{- \uppi^0\left((\nabla\delta)^{\mathrm{T}}c-e-2^{-1}(\nabla\delta)^{\mathrm{T}} a\,\nabla\delta\right)\, t}\,\mathbb{E}\bigg[g\bigl( \varepsilon\bar X^\varepsilon(x/\varepsilon,t/\varepsilon^2)\bigr)\\
	&\hspace{4.9cm}\E^{-\frac{\varepsilon^2}{2} \int_0^{\varepsilon^{-2}t}\left((\nabla\delta)^{\mathrm{T}} a\,\nabla\delta\right)\left(\bar X^\varepsilon(x/\varepsilon,s)\right)\,\D s-\varepsilon\int_0^{\varepsilon^{-2}t}\left((\nabla\delta)^{\mathrm{T}}\upsigma\right)\left(\bar X^\varepsilon(x/\varepsilon,s)\right)\,\D B^\varepsilon(s)}\bigg]\bigg|\\
	&\,\le\, \mathbb{E}\bigg[\left|g\bigl( \varepsilon\bar X^\varepsilon(x/\varepsilon,t/\varepsilon^2)\bigr)\right|\\&\hspace{1.15cm} \left|\E^{- \varepsilon^2 \int_0^{\varepsilon^{-2}t}\left((\nabla\delta)^{\mathrm{T}}c-e-2^{-1}(\nabla\delta)^{\mathrm{T}} a\,\nabla\delta\right)\left(\bar X^\varepsilon(x/\varepsilon,s)\right)\,\D s}-\E^{- \uppi^0\left((\nabla\delta)^{\mathrm{T}}c-e-2^{-1}(\nabla\delta)^{\mathrm{T}} a\,\nabla\delta\right)\, t}\right|\\
	&\hspace{1.4cm}\E^{-\frac{\varepsilon^2}{2} \int_0^{\varepsilon^{-2}t}\left((\nabla\delta)^{\mathrm{T}} a\,\nabla\delta\right)\left(\bar X^\varepsilon(x/\varepsilon,s)\right)\,\D s-\varepsilon\int_0^{\varepsilon^{-2}t}\left((\nabla\delta)^{\mathrm{T}}\upsigma\right)\left(\bar X^\varepsilon(x/\varepsilon,s)\right)\,\D B^\varepsilon(s)}\bigg]\\
	&\,\le\, \mathbb{E}\left[\left|g\bigl( \varepsilon\bar X^\varepsilon(x/\varepsilon,t/\varepsilon^2)\bigr)\right|^2\right]^{1/2}\\
	&\ \ \ \ \ \ \mathbb{E}\bigg[\left|\E^{- \varepsilon^2 \int_0^{\varepsilon^{-2}t}\left((\nabla\delta)^{\mathrm{T}}c-e-2^{-1}(\nabla\delta)^{\mathrm{T}} a\,\nabla\delta\right)\left(\bar X^\varepsilon(x/\varepsilon^2,s)\right)\,\D s}-\E^{- \uppi^0\left((\nabla\delta)^{\mathrm{T}}c-e-2^{-1}(\nabla\delta)^{\mathrm{T}} a\,\nabla\delta\right)\, t}\right|^2\\
	&\hspace{1.3cm}\E^{-\varepsilon^2 \int_0^{\varepsilon^{-2}t}\left((\nabla\delta)^{\mathrm{T}} a\,\nabla\delta\right)\left(\bar X^\varepsilon(x/\varepsilon,s)\right)\,\D s-2\varepsilon\int_0^{\varepsilon^{-2}t}\left((\nabla\delta)^{\mathrm{T}}\upsigma\right)\left(\bar X^\varepsilon(x/\varepsilon,s)\right)\,\D B^\varepsilon(s)}\bigg]^{1/2}\,.
\end{aligned}
\end{equation}	
 From  \cref{EPT4.4A,EPT4.4C} we see that the first term on the right-hand side in \cref{EPT4.4D} is uniformly bounded for $\varepsilon$ on finite intervals. For the second term we have that 
	\begin{equation}\label{EPT4.4E}
	\begin{aligned}
	&\mathbb{E}\bigg[\left|\E^{- \varepsilon^2 \int_0^{\varepsilon^{-2}t}\left((\nabla\delta)^{\mathrm{T}}c-e-2^{-1}(\nabla\delta)^{\mathrm{T}} a\,\nabla\delta\right)\left(\bar X^\varepsilon(x/\varepsilon,s)\right)\,\D s}-\E^{- \uppi^0\left((\nabla\delta)^{\mathrm{T}}c-e-2^{-1}(\nabla\delta)^{\mathrm{T}} a\,\nabla\delta\right) \,t}\right|^2\\
	&\hspace{0.7cm}\E^{-\varepsilon^2 \int_0^{\varepsilon^{-2}t}\left((\nabla\delta)^{\mathrm{T}} a\,\nabla\delta\right)\left(\bar X^\varepsilon(x/\varepsilon,s)\right)\,\D s-2\varepsilon\int_0^{\varepsilon^{-2}t}\left((\nabla\delta)^{\mathrm{T}}\upsigma\right)\left(\bar X^\varepsilon(x/\varepsilon,s)\right)\,\D B^\varepsilon(s)}\bigg]\\
	&\,\le\, \mathbb{E}\bigg[\left|\E^{- \varepsilon^2 \int_0^{\varepsilon^{-2}t}\left((\nabla\delta)^{\mathrm{T}}c-e-2^{-1}(\nabla\delta)^{\mathrm{T}} a\,\nabla\delta\right)\left(\bar X^\varepsilon(x/\varepsilon,s)\right)\,\D s}-\E^{- \uppi^0\left((\nabla\delta)^{\mathrm{T}}c-e-2^{-1}(\nabla\delta)^{\mathrm{T}} a\,\nabla\delta\right)\, t}\right|^4\bigg]^{1/2}\\
	& \hspace{0.65cm}\mathbb{E}\bigg[\E^{-2\varepsilon^2 \int_0^{\varepsilon^{-2}t}\left((\nabla\delta)^{\mathrm{T}} a\,\nabla\delta)^{\mathrm{T}}\right)\left(\bar X^\varepsilon(x/\varepsilon,s)\right)\,\D s-4\varepsilon\int_0^{\varepsilon^{-2}t}\left((\nabla\delta)^{\mathrm{T}}\upsigma\right)\left(\bar X^\varepsilon(x/\varepsilon,s)\right)\,\D B^\varepsilon(s)}\bigg]^{1/2}\,.
	\end{aligned}
	\end{equation}
	Clearly, 	 
	\begin{align*}
	&\mathbb{E}\bigg[\E^{-2\varepsilon^2 \int_0^{\varepsilon^{-2}t}\left((\nabla\delta)^{\mathrm{T}} a\,\nabla\delta\right)\left(\bar X^\varepsilon(x/\varepsilon,s)\right)\,\D s-4\varepsilon\int_0^{\varepsilon^{-2}t}\left((\nabla\delta)^{\mathrm{T}}\upsigma\right)\left(\bar X^\varepsilon(x/\varepsilon,s)\right)\,\D B^\varepsilon(s)}\bigg]\\
	&\,\le\,\E^{6t\|(\nabla\delta)^{\mathrm{T}} a\,\nabla\delta\|_\infty}\,.
	\end{align*}
Analogously  as in the proof of \Cref{T3.5} we see that
	\begin{align*}
	&\varepsilon^2 \int_0^{\varepsilon^{-2}t}\left((\nabla\delta)^{\mathrm{T}}c-e-2^{-1}(\nabla\delta)^{\mathrm{T}} a\,\nabla\delta\right)\left(\bar X^\varepsilon(x/\varepsilon,s)\right)\,\D s\\
	&\,=\, \varepsilon^2 \int_0^{\varepsilon^{-2}t}\left((\nabla\delta)^{\mathrm{T}}c-e-2^{-1}(\nabla\delta)^{\mathrm{T}} a\,\nabla\delta\right)\left(\bar X^{\varepsilon,\tau}(\Pi_\tau(x/\varepsilon),s)\right)\,\D s\\&\,\xrightarrow[\varepsilon\to0]{{\rm L}^2(\Prob)}\, \uppi^0\left((\nabla\delta)^{\mathrm{T}}c-e-2^{-1}(\nabla\delta)^{\mathrm{T}} a\,\nabla\delta\right)\, t\,.
	\end{align*} 
	Consequently, Skorohod representation theorem and dominated convergence theorem imply that the first term on the right-hand side in \cref{EPT4.4E} converges to zero as $\varepsilon\to0$.
Thus,  $u^\varepsilon(x,t)$ converges as $\varepsilon\to0$ if, and only if,
	\begin{align*}
	&\E^{- \uppi^0\left({\rm L}^2c-e-2^{-1}(\nabla\delta)^{\mathrm{T}} a\,\nabla\delta\right)\, t}\,\mathbb{E}\bigg[g\bigl( \varepsilon\bar X^\varepsilon(x/\varepsilon,t/\varepsilon^2)\bigr)\\
	&\hspace{4.2cm}\E^{-\frac{\varepsilon^2}{2} \int_0^{\varepsilon^{-2}t}\left((\nabla\delta)^{\mathrm{T}} a\,\nabla\delta\right)\left(\bar X^\varepsilon(x/\varepsilon,s)\right)\,\D s-\varepsilon\int_0^{\varepsilon^{-2}t}\left((\nabla\delta)^{\mathrm{T}}\upsigma\right)\left(\bar X^\varepsilon(x/\varepsilon^2,s)\right)\,\D B^\varepsilon(s)}\bigg]
		\end{align*}
	converges, and if this is the case the limit is the same. 
	Martingale convergence theorem now implies that 
	$$\E^{-\frac{\varepsilon^2}{2} \int_0^{\varepsilon^{-2}t}\left((\nabla\delta)^{\mathrm{T}} a\,\nabla\delta\right)\left(\bar X^\varepsilon(x/\varepsilon,s)\right)\,\D s-\varepsilon\int_0^{\varepsilon^{-2}t}\left((\nabla\delta)^{\mathrm{T}}\upsigma\right)\left(\bar X^\varepsilon(x/\varepsilon,s)\right)\,\D B^\varepsilon(s)} \,\xrightarrow[t\to\infty]{\Prob\text{-}{\rm a.s.\ and\ L^1(\Prob)}}\,  Y(x,\varepsilon)\,,$$ where $Y(x,\varepsilon)\in\mathrm{L}^1(\Prob)$ satisfies 
	$$\mathbb{E}[Y(x,\varepsilon)|\mathcal{F}_{\varepsilon^{-2}t}]\,=\,\E^{-\frac{\varepsilon^2}{2} \int_0^{\varepsilon^{-2}t}\left((\nabla\delta)^{\mathrm{T}} a\,\nabla\delta\right)\left(\bar X^\varepsilon(x/\varepsilon,s)\right)\,\D s-\varepsilon\int_0^{\varepsilon^{-2}t}\left((\nabla\delta)^{\mathrm{T}}\upsigma\right)\left(\bar X^\varepsilon(x/\varepsilon,s)\right)\,\D B^\varepsilon(s)}\,,\qquad t\ge0\,.$$
	Define $\Prob^\varepsilon(\D\omega)\df Y(x,\varepsilon)(\omega)\, \Prob(\D \omega)$.
	Clearly, 
	\begin{align*}&\mathbb{E}^\varepsilon\left[g\bigl( \varepsilon\bar X^\varepsilon(x/\varepsilon,t/\varepsilon^2)\bigr)\right]\\
	&\,=\,\mathbb{E}\left[g\bigl( \varepsilon\bar X^\varepsilon(x/\varepsilon,t/\varepsilon^2)\bigr)\,\E^{-\frac{\varepsilon^2}{2} \int_0^{\varepsilon^{-2}t}\left((\nabla\delta)^{\mathrm{T}} a\,\nabla\delta\right)\left(\bar X^\varepsilon(x/\varepsilon,s)\right)\,\D s-\varepsilon\int_0^{\varepsilon^{-2}t}\left((\nabla\delta)^{\mathrm{T}}\upsigma\right)\left(\bar X^\varepsilon(x/\varepsilon,s)\right)\,\D B^\varepsilon(s)}\right]\,,
	\end{align*} 
	and Girsanov theorem implies that $\bar B^\varepsilon(t)\df B^\varepsilon(t)+\varepsilon\int_0^{t}(\upsigma^{\mathrm{T}}\nabla\delta)\left(\bar X^\varepsilon(x/\varepsilon,s)\right)\D s$, $t\ge0$, is a $\Prob^\varepsilon$-Brownian motion.
	From \cref{EPT4.4B} we have 
		\begin{align*}
	\varepsilon\bar X^\varepsilon(x/\varepsilon,t/\varepsilon^2)\,=\,&x+\varepsilon\beta\bigl(\bar X^\varepsilon(x/\varepsilon,t/\varepsilon^2)\bigr)-\varepsilon\beta(x/\varepsilon)\\&+\varepsilon^2\int_0^{\varepsilon^{-2}t}\left((c-\mathrm{D}\beta\, c)-(\Id_\n-\mathrm{D}\beta)\,a\,\nabla\delta\right)\bigl(\bar X^\varepsilon(x/\varepsilon,s)\bigr)\,\D s\\ &+\varepsilon\int_0^{\varepsilon^{-2}t}(\upsigma-\mathrm{D}\beta\,\upsigma)\bigl(\bar X^\varepsilon(x/\varepsilon,s)\bigr)\,\D \bar B^{\varepsilon}(s)\,.
	\end{align*}	
	It is clear that $\{\varepsilon\bar X^\varepsilon(x/\varepsilon,t/\varepsilon^2)\}_{t\ge0}$ converges in law as $\varepsilon\to0$ if, and only if, $\{\varepsilon\bar X^\varepsilon(x/\varepsilon,t/\varepsilon^2)-\varepsilon\beta(\bar X^\varepsilon(x/\varepsilon,t/\varepsilon^2))+\varepsilon\beta(x/\varepsilon)\}_{t\ge0}$, and if this is the case the limit is the same. The bounded variation and predictable quadratic covariation parts of $\{\varepsilon\bar X^\varepsilon(x/\varepsilon,t/\varepsilon^2)-\varepsilon\beta(\bar X^\varepsilon(x/\varepsilon,t/\varepsilon^2))+\varepsilon\beta(x/\varepsilon)\}_{t\ge0}$ are given by 
	$$\left\{\varepsilon^2\int_0^{\varepsilon^{-2}t}\bigl((c-\mathrm{D}\beta\, c)-(\Id_\n-\mathrm{D}\beta)\,a\,\nabla\delta\bigr)\bigl(\bar X^\varepsilon(x/\varepsilon,s)\bigr)\,\D s\right\}_{t\ge0}\,,$$
	and
	$$\left\{\varepsilon^2\int_0^{\varepsilon^{-2}t}\bigl((\Id_\n-\mathrm{D}\beta)\, a\,(\Id_\n-\mathrm{D}\beta)^{\mathrm{T}}\bigr)\bigl(\bar X^\varepsilon(x/\varepsilon,s)\bigr)\,\D s\right\}_{t\ge0}\,,$$ respectively.
	We will now show that finite-dimensional distributions of $\{\varepsilon\tilde X^\varepsilon(x/\varepsilon,t/\varepsilon^2)$\linebreak$-\varepsilon\beta(\tilde X^\varepsilon(x/\varepsilon,t/\varepsilon^2))+\varepsilon\beta(x/\varepsilon)\}_{t\ge0}$ converge in law to finite-dimensional distributions of  \linebreak $\process{\bar W}$.
	According to \cite[Theorem VIII.2.4]{Jacod-Shiryaev-2003} this will hold if $$\varepsilon^2\int_0^{\varepsilon^{-2}t}\left((c-\mathrm{D}\beta\, c)-(\Id_\n-\mathrm{D}\beta)\,a\,\nabla\delta\right)\bigl(\bar X^\varepsilon(x/\varepsilon,s)\bigr)\,\D s\,\xrightarrow[\varepsilon\to0]{\Prob^\varepsilon}\,\bar{\mathsf{b}}\, t\,,$$ and $$ \varepsilon^2\int_0^{\varepsilon^{-2}t}\bigl((\Id_\n-\mathrm{D}\beta)\, a\,(\Id_\n-\mathrm{D}\beta)^{\mathrm{T}}\bigr)\bigl(\bar X^\varepsilon(x/\varepsilon,s)\bigr)\,\D s\,\xrightarrow[\varepsilon\to0]{\Prob^\varepsilon}\,\mathsf{a}\,t
	$$ for all $t\ge0$.
  We now have 
	\begin{align*}&\mathbb{E}^\varepsilon\left[\varepsilon^2\left|\int_0^{\varepsilon^{-2}t} \left(\left((c-\mathrm{D}\beta\, c)-(\Id_\n-\mathrm{D}\beta)\,a\,\nabla\delta\right)-\bar{\mathsf{b}}\right)\bigl(\bar X^\varepsilon(x/\varepsilon,s)\bigr)\,\D s\right|\right]^2\\
	&\,=\, \mathbb{E}\Bigg[\varepsilon^2\left|\int_0^{\varepsilon^{-2}t} \left(\left((c-\mathrm{D}\beta\, c)-(\Id_\n-\mathrm{D}\beta)\,a\,\nabla\delta\right)-\bar{\mathsf{b}}\right)\bigl(\bar X^\varepsilon(x/\varepsilon,s)\bigr)\,\D s\right|\\
	&\hspace{1.7cm}\E^{-\frac{\varepsilon^2}{2} \int_0^{\varepsilon^{-2}t}\left((\nabla\delta)^{\mathrm{T}} a\,\nabla\delta\right)\left(\bar X^\varepsilon(x/\varepsilon,s)\right)\,\D s-\varepsilon\int_0^{\varepsilon^{-2}t}\left((\nabla\delta)^{\mathrm{T}}\upsigma\right)\left(\bar X^\varepsilon(x/\varepsilon,s)\right)\,\D B^\varepsilon(s)}\Bigg]^2\\
	&\le \varepsilon^4\mathbb{E}\Bigg[\left(\int_0^{\varepsilon^{-2}t}\left(\left((c-\mathrm{D}\beta\, c)-(\Id_\n-\mathrm{D}\beta)\,a\,\nabla\delta\right)-\bar{\mathsf{b}}\right)\bigl(\bar X^\varepsilon(x/\varepsilon,s)\bigr)\,\D s\right)^{\mathrm{T}}\\&\hspace{1.2cm}\left(\int_0^{\varepsilon^{-2}t}\left(\left((c-\mathrm{D}\beta\, c)-(\Id_\n-\mathrm{D}\beta)\,a\,\nabla\delta\right)-\bar{\mathsf{b}}\right)\bigl(\bar X^\varepsilon(x/\varepsilon,s)\bigr)\,\D s\right)\Bigg]\,\E^{t\|(\nabla\delta)^{\mathrm{T}} a\,\nabla\delta\|_\infty}\,.\end{align*}
	Now, as in the proof of \Cref{T3.5} follows that 
	$$\varepsilon^2\int_0^{\varepsilon^{-2}t}\left((c-\mathrm{D}\beta\, c)-(\Id_\n-\mathrm{D}\beta)\,a\,\nabla\delta\right)\bigl(\bar X^\varepsilon(x/\varepsilon,s)\bigr)\,\D s\,\xrightarrow[\varepsilon\to0]{\mathrm{L}^2(\Prob)}\,\bar{\mathsf{b}}\, t\,,$$ which implies 
	$$\varepsilon^2\int_0^{\varepsilon^{-2}t}\left((c-\mathrm{D}\beta\, c)-(\Id_\n-\mathrm{D}\beta)\,a\,\nabla\delta\right)\bigl(\bar X^\varepsilon(x/\varepsilon,s)\bigr)\,\D s\,\xrightarrow[\varepsilon\to0]{\mathrm{L}^1(\Prob^\varepsilon)}\,\bar{\mathsf{b}}\, t\,.$$ Analogous result holds for the predictable quadratic covariation part.
	Thus, finite-dimensional distributions of $\{\varepsilon\tilde X^\varepsilon(x/\varepsilon,t/\varepsilon^2)\}_{t\ge0}$ converge in law to finite-dimensional distributions of \linebreak
	$\{\bar W(x,t)\}_{t\ge0}.$
	It remains to prove that 
	$$\lim_{\varepsilon \to 0}\mathbb{E}^\varepsilon\bigl[g\bigl(\varepsilon\bar X^{\varepsilon}(x/\varepsilon,t/\varepsilon^2)\bigr)\bigr]\,=\,\mathbb{E}\bigl[g\bigl(\bar W(x,t)\bigr)\bigr]\qquad \forall\, t\ge0\,.$$ Without loss of generality we may assume that $f(x)$ is non-negative. From Skorohod representation theorem and Fatou's lemma we conclude
	$$\liminf_{\varepsilon \to 0}\mathbb{E}^\varepsilon\bigl[g\bigl(\varepsilon\bar X^{\varepsilon}(x/\varepsilon,t/\varepsilon^2)\bigr)\bigr]\,\ge\,\mathbb{E}\bigl[g\bigl(\bar W(x,t)\bigr)\bigr]\qquad \forall\, t\ge0\,.$$ To prove the reverse inequality we proceed as follows. For any $t\ge0$ we have
	\begin{align*}
	&\limsup_{\varepsilon \to 0}\mathbb{E}^\varepsilon\bigl[g\bigl(\varepsilon\bar  X^{\varepsilon}(x/\varepsilon,t/\varepsilon^2)\bigr)\bigr]\\ &\,\le\,\limsup_{m\to\infty}\limsup_{\varepsilon \to 0}\mathbb{E}^\varepsilon\bigl[(g\wedge m)\bigl(\varepsilon\bar X^{\varepsilon}(x/\varepsilon,t/\varepsilon^2)\bigr)\bigr]\\&\ \ \ \ +\limsup_{m\to\infty}\limsup_{\varepsilon \to 0}\mathbb{E}^\varepsilon\left[g\bigl(\varepsilon\bar X^{\varepsilon}(x/\varepsilon,t/\varepsilon^2)\bigr)\,\mathbb{1}_{\{g\left(\varepsilon\bar  X^{\varepsilon}(x/\varepsilon,t/\varepsilon^2)\right)\ge m\}}\right]\\
	&\,\le\,\limsup_{m\to\infty}\mathbb{E}\bigl[(g\wedge m)\bigl(\bar W(x,t)\bigr)\bigr]\\&\ \ \ \ +\limsup_{m\to\infty}\limsup_{\varepsilon \to 0}\mathbb{E}^\varepsilon\left[g\bigl(\varepsilon\bar X^{\varepsilon}(x/\varepsilon,t/\varepsilon^2)\bigr)\,\mathbb{1}_{\{g\left(\varepsilon\bar  X^{\varepsilon}(x/\varepsilon,t/\varepsilon^2)\right)\ge m\}}\right]\\
	&\,=\,\mathbb{E}\bigl[g\bigl(\bar W(x,t)\bigr)\bigr] +\limsup_{m\to\infty}\limsup_{\varepsilon \to 0}\mathbb{E}^\varepsilon\left[g\bigl(\varepsilon\bar  X^{\varepsilon}(x/\varepsilon,t/\varepsilon^2)\bigr)\,\mathbb{1}_{\{g\left(\varepsilon\bar  X^{\varepsilon}(x/\varepsilon,t/\varepsilon^2)\right)\ge m\}}\right]\,.
	\end{align*}
	Finally, we show that $$\limsup_{m\to\infty}\limsup_{\varepsilon \to 0}\mathbb{E}^\varepsilon\left[g\bigl(\varepsilon\bar X^{\varepsilon}(x/\varepsilon,t/\varepsilon^2)\bigr)\,\mathbb{1}_{\{g\left(\varepsilon\bar X^{\varepsilon}(x/\varepsilon,t/\varepsilon^2)\right)\ge m\}}\right]\,=\,0\qquad\forall\,t\ge0\,.$$
	We have
	\begin{align*}&\mathbb{E}^\varepsilon\left[g\bigl(\varepsilon\bar X^{\varepsilon}(x/\varepsilon,t/\varepsilon^2)\bigr)\,\mathbb{1}_{\{g\left(\varepsilon\bar X^{\varepsilon}(x/\varepsilon,t/\varepsilon^2)\right)\ge m\}}\right]\\
	&\,\le\,\mathbb{E}^\varepsilon\left[\left|g\bigl(\varepsilon\bar X^{\varepsilon}(x/\varepsilon,t/\varepsilon^2)\bigr)\right|^2\right]^{1/2} \left(\Prob^\varepsilon\left(g\left(\varepsilon\bar X^{\varepsilon}(x/\varepsilon,t/\varepsilon^2)\right)\ge m\right)\right)^{1/2}\\
	&\,\le\,\frac{1}{m}\,\mathbb{E}^\varepsilon\left[\left|g\bigl(\varepsilon\bar X^{\varepsilon}(x/\varepsilon,t/\varepsilon^2)\bigr)\right|^2\right]\,.
	\end{align*} Now, as in \cref{EPT4.4A} we get
	$$\mathbb{E}^\varepsilon\left[\left|g\bigl(\varepsilon\bar X^{\varepsilon}(x/\varepsilon,t/\varepsilon^2)\bigr)\right|^2\right]\,\le\,\hat K(1+|x|^{2\kappa}+\varepsilon^{2\kappa}+t^{\kappa})$$  for some $\hat K>0$   which does not depend on $\varepsilon$. The assertion  now follows.

	To this end it remains to show \begin{align*}&\lim_{\varepsilon \to 0}\mathbb{E}\left[ \int_0^tf\bigl( X^\varepsilon(x,s)\bigr)\,\E^{\int_0^s\left(\varepsilon^{-1}d\left( X^\varepsilon(x,u)/\varepsilon\right)+e\left(X^\varepsilon(x,u)/\varepsilon\right)\right)\,\D u}\,\D s\right]\\&\,=\,\mathbb{E}\left[\int_0^tf(\bar W(x,s))\,\E^{\uppi^0\left(2^{-1}(\nabla\delta)^{\mathrm{T}}a\,\nabla\delta+e-(\nabla\delta)^\mathrm{T}c\right)\,s}\,\D s\right]\,.\end{align*} From the first part of the proof we see that 
	\begin{align*}&\lim_{\varepsilon \to 0}\mathbb{E}\left[ f\bigl( X^\varepsilon(x,s)\bigr)\,\E^{\int_0^s\left(\varepsilon^{-1}d\left( X^\varepsilon(x,u)/\varepsilon\right)+e\left(X^\varepsilon(x,u)/\varepsilon\right)\right)\,\D u}\right]\\&\,=\,\mathbb{E}\left[f(\bar W(x,s))\,\E^{\uppi^0\left(2^{-1}(\nabla\delta)^{\mathrm{T}}a\,\nabla\delta+e-(\nabla\delta)^\mathrm{T}c\right)\,s}\right]\qquad \forall\,s\ge0\,,\end{align*} 
and
\begin{align*}&\mathbb{E}\left[ f\bigl( X^\varepsilon(x,s)\bigr)\,\E^{\int_0^s\left(\varepsilon^{-1}d\left( X^\varepsilon(x,u)/\varepsilon\right)+e\left(X^\varepsilon(x,u)/\varepsilon\right)\right)\,\D u}\right]\\
&\,\le\,\mathbb{E}\left[ \left|f\bigl( X^\varepsilon(x,s)\bigr)\right|^2\right]^{1/2}\mathbb{E}\left[\E^{2\int_0^s\left(\varepsilon^{-1}d\left( X^\varepsilon(x,u)/\varepsilon\right)+e\left(X^\varepsilon(x,u)/\varepsilon\right)\right)\,\D u}\right]^{1/2}\\
&\,\le\, \check{K}\, (1+|x|^\kappa+\varepsilon^\kappa+s^{\kappa/2})\\
&\ \ \ \ \ \  \mathbb{E}\left[\E^{2\varepsilon\delta\left(\bar X^\varepsilon(x/\varepsilon,s/\varepsilon^2)\right)-2\varepsilon\delta(x/\varepsilon) - 2\varepsilon^2 \int_0^{\varepsilon^{-2}s}\left((\nabla\delta)^\mathrm{T}c-e\right))\left(\bar X^\varepsilon(x/\varepsilon,u)\right)\,\D u
	-2\varepsilon\int_0^{\varepsilon^{-2}s}\left((\nabla\delta)^{\mathrm{T}}\upsigma\right)\left(\bar X^\varepsilon(x/\varepsilon,u)\right)\,\D B^\varepsilon(u)}\right]^{1/2}\\
&\,\le\, \check{K}\, (1+|x|^\kappa+\varepsilon^\kappa+s^{\kappa/2})\, \E^{2\varepsilon\,\|\delta\|_\infty+\|(\nabla\delta)^\mathrm{T}c-e-(\nabla\delta)^\mathrm{T}a\,\nabla\delta\|_\infty s}\,,
\end{align*} for some $\check{K}>0$ which does not depend on $\varepsilon$.
The result now follows from the dominated convergence theorem.
	\end{proof}

\medskip

\begin{remark}\label{R4.6}  Grant the assumptions in \Cref{T4.5}.
 Under $\Prob^\varepsilon$ (where $\Prob^0\df\Prob$) the process $\process{\bar X^\varepsilon}$ solves 
		\begin{align*}\D\bar X^\varepsilon(x,t)&\,=\, b\bigl(\bar X^\varepsilon(x,t)\bigr)\,\D t+\varepsilon (c-a\nabla\delta)\bigl(\bar X^\varepsilon(x,t)\bigr)\D t+ \upsigma\bigl(\bar X^\varepsilon(x,t)\bigr)\,\D \bar B^\varepsilon(t)\\
		\bar X^\varepsilon(x,0)&\,=\,x\,, \end{align*}
		and satisfies
		\begin{align*} \bar X^\varepsilon(x,t)\,=\,&x+\beta\bigl(\bar X^\varepsilon(x,t)\bigr)-\beta(x)+\varepsilon\int_0^t\bigl((c-\mathrm{D}\beta\,c)-(\Id_\n-\mathrm{D}\beta)\,a\,\nabla\delta\bigr)\bigl(\bar X^\varepsilon(x,s)\bigr)\,\D s\\&+\int_0^t(\upsigma-\mathrm{D}\beta\,\upsigma)\bigl(\bar X^\varepsilon(x,s)\bigr)\,\D \bar B^\varepsilon(s)\qquad\forall\,t\ge0\,.\end{align*}
		We now easily see that \cref{ES2.4} and \Cref{P2.1} hold under $\Prob^\varepsilon$. Furthermore, if there is $\varepsilon_1>0$ such that \begin{equation}\label{ES4.9}\Prob^\varepsilon\bigl(\bar\uptau^{\varepsilon,x}_{\mathscr{O}+\tau}<\infty\bigr)\,>\,0\qquad \forall\, (\varepsilon,x)\in[0,\varepsilon_1]\times\R^\n\,, \end{equation}  then \Cref{P2.3,P2.4,P2.5} also hold under $\Prob^\varepsilon$. 
		Assume  $f(x)=f_1(x)+f_2(x/\varepsilon)$ and $g(x)=g_1(x)+g_2(x/\varepsilon)$, where $f_1,g_1\in\mathcal{C}(\R^\n,\R)$ satisfy \cref{ES4.2} and $f_2,g_2\in\mathcal{C}(\R^\n,\R)$ are $\tau$-periodic.
		Then, \begin{align*}\lim_{\varepsilon \to 0}\mathbb{E}^\varepsilon\bigl[f\bigl(\varepsilon\bar X^\varepsilon(x/\varepsilon,t/\varepsilon^2)\bigr)\bigr]&\,=\,\mathbb{E}\bigl[f_1\big(\bar W(x,t)\bigr)\bigr]+\lim_{\varepsilon \to 0}\mathbb{E}^\varepsilon\bigl[f_2\bigl(\Pi_\tau\bigl(\bar X^\varepsilon(x/\varepsilon,t/\varepsilon^2)\bigr)\bigr)\bigr]\\&\,=\,\mathbb{E}\bigl[f_1\big(\bar W(x,t)\bigr)\bigr]+\uppi^0(f_2)\qquad\forall\, t\ge0\,,\end{align*} and $$
		\lim_{\varepsilon \to 0}\mathbb{E}^\varepsilon\bigl[f\bigl(\varepsilon\bar X^\varepsilon(x/\varepsilon,t/\varepsilon^2)\bigr)\bigr]\,=\,\mathbb{E}\bigl[f_1\big(\bar W(x,t)\bigr)\bigr]+\uppi^0(f_2)\qquad\forall\, t\ge0\,.$$
		Thus, under \ttup{A1}-\ttup{A4}  (or \ttup{A1}-\ttup{A3} if $c(x)\equiv0$ or $b(x)\equiv 0$, and $d(x)\equiv 0$) and \cref{ES4.9}, 
		$$ \lim_{\varepsilon \to 0} u^\varepsilon(t,x)\,=\, u^0(t,x)\qquad \forall\,(t,x)\in[0,\infty)\times\R^\n\,,$$ where \begin{align*}u^0(t,x)\,\df\,\mathbb{E}\bigg[&\left(g_1(\bar W(x,t))+\uppi^0(g_2)\right)\,\E^{\uppi^0\left(2^{-1}(\nabla\delta)^{\mathrm{T}}a\,\nabla\delta+e-(\nabla\delta)^{\mathrm{T}}c\right)\,t}\\&+\int_0^t\left(f_1(\bar W(x,s))+\uppi^0(f_2)\right)\,\E^{\uppi^0\left(2^{-1}(\nabla\delta)^{\mathrm{T}}a\,\nabla\delta+e-(\nabla\delta)^{\mathrm{T}}c\right)\,s}\D s\bigg]\end{align*} is a solution to 
		\begin{align*}
		\partial_t u^0(x,t)&\,=\,\mathcal{L}^0 u(x,t)+\uppi^0\bigl(2^{-1}(\nabla\delta)^{\mathrm{T}}a\,\nabla\delta+e-(\nabla\delta)^{\mathrm{T}}c\bigr) u^0(x,t)+f_1(x)+\uppi^0(f_2)\\
		u^0(x,0)&\,=\,g_1(x)+\uppi^0(g_2)\,,\qquad x\in\R^\n\,.
		\end{align*}

		\end{remark}

\section*{Acknowledgements}
 Financial support through the \textit{Alexander-von-Humboldt Foundation} and \textit{Croatian Science Foundation} under project 8958  (for N. Sandri\'c), and
Croatian Science Foundation under project 8958 (for I. Valenti\'c)
are  gratefully acknowledged.

\bibliographystyle{abbrv}
\bibliography{References}

\end{document}